\newtheorem{thm}{Theorem}[section]
\newtheorem{lem}[thm]{Lemma}
\newtheorem{prop}[thm]{Proposition}
\newtheorem{cor}[thm]{Corollary}
\theoremstyle{definition}
\newtheorem{defn}[thm]{Definition}
\newtheorem{notation}[thm]{Notation}
\newtheorem{eg}[thm]{Example}
\newtheorem{rem}[thm]{Remark}
\numberwithin{equation}{section}
\newcounter{thmlistcnt}
\newenvironment{thmlist}%
	{\setcounter{thmlistcnt}{0}%
	\begin{list}{\emph{(\roman{thmlistcnt})}}{%
		\usecounter{thmlistcnt}%
		\setlength{\topsep}{0pt}%
		\setlength{\leftmargin}{0pt}%
		\setlength{\itemsep}{0pt}%
		\setlength{\labelwidth}{17pt}
		\setlength{\itemindent}{30pt}}%
	}%
	{\end{list}}%
\renewcommand{\t}{\mathfrak{t}}
\newcommand{\TT}{\mathscr{T}}
\newcommand{\T}{\mathrm{T}}
\renewcommand{\O}{\mathcal{O}}
\newcommand{\mbf}[1]{\boldsymbol{#1}}
\newcommand{\sgn}{\mathrm{sgn}}
\newcommand{\Hom}{\operatorname{Hom}}
\newcommand{\sym}[1]{\mathfrak{S}_{#1}}
\newcommand{\alt}[1]{\mathfrak{A}_{#1}}
\newcommand{\Ind}{\operatorname{\mathrm{Ind}}}
\newcommand{\Inf}{\operatorname{\mathrm{Inf}}}
\newcommand{\Res}{\operatorname{\mathrm{Res}}}
\newcommand{\F}{F}
\newcommand{\C}{\mathscr{C}}
\newcommand{\RP}{\mathscr{R}\hskip-0.5pt\mathscr{P}}
\renewcommand{\P}{\mathscr{P}}
\newcommand{\bbullet}{\hskip0.75pt\bullet\hskip0.75pt}
\newcommand{\N}{\mathbb{N}}
\newcommand{\Proj}{\mathrm{Proj}}
\newcommand{\List}{\mathrm{List}}
\newcommand{\GL}{\mathrm{GL}}
\newcommand{\Sym}{\mathrm{Sym}}
\newcommand{\End}{\mathrm{End}}
\newcommand{\largeboxtimes}{\scalebox{1.2}{$\boxtimes$}}
\newcommand{\pdot}{}
\newcommand{\LambdaABR}{\Lambda\bigl((\alpha|\beta),\rho\bigr)}
\newcommand{\Plambda}{P^\lambda} 
\newcommand{\Palpha}{P^\alpha}
\newcommand{\Pbeta}{P^\beta}
\newcommand{\sourced}{source.}
\newcommand{\vertex}{vertex }
\renewcommand{\negthinspace}{\hskip-0.5pt}
\newcommand{\vthinspace}{\hskip0.5pt}
\begin{document}
\title[On Signed $p$-Kostka numbers]{On signed Young permutation modules
and signed $p$-Kostka numbers}

\author{Eugenio Giannelli}
\address[E. Giannelli]{Department of Mathematics, University of Kaiserslautern,
P.O. Box 3049, 67655 Kaiserslautern, Germany}
\email{gianelli@mathematik.uni-kl.de}

\author{Kay Jin Lim}
\address[K. J. Lim]{Division of Mathematical Sciences, Nanyang Technological University, SPMS-MAS-03-01, 21 Nanyang Link, Singapore 637371.}
\email{limkj@ntu.edu.sg}

\author{William O'Donovan}
\address[W. O'Donovan]{Department of Mathematics, Royal Holloway, University of London, United Kingdom.}
\email{william.odonovan.2014@live.rhul.ac.uk}

\author{Mark Wildon}
\address[M. Wildon]{Department of Mathematics, Royal Holloway, University of London, United Kingdom.}
\email{mark.wildon@rhul.ac.uk}

\thanks{The first author is supported by the ERC advanced grant 291512 and the London Mathematical Society Postdoctoral Mobility Grant PMG14-15 02.
The second author is supported by Singapore Ministry of Education AcRF Tier 1 grant RG13/14. Part of this work was done while the first and fourth authors visited
National University of Singapore in December 2014; this visit was supported by
London Mathematical Society grant 41406 and funding from
National University of Singapore
and Royal Holloway, University of London. We thank Kai Meng Tan for his valuable suggestions.}

\begin{abstract}
We prove the existence and main properties of signed Young modules
for the symmetric group, using only basic facts about symmetric group representations and the
Brou{\'e} correspondence. We then
prove new reduction theorems for the signed $p$-Kostka numbers,
defined to be the multiplicities of signed Young modules as direct summands
of signed Young permutation modules. We end by classifying the indecomposable signed Young permutation modules
and determining their endomorphism algebras.
\end{abstract}

\maketitle
\thispagestyle{empty}

\section{Introduction}

Let $F$ be a field of odd prime characteristic $p$
and let $\sym{n}$ denote the symmetric group of degree $n$.
In this article we investigate the modular structure of the $p$-permutation $F\sym{n}$-modules defined
 by inducing a linear representation of a Young subgroup of $\sym{n}$~to~$\sym{n}$.

Let $\P^2(n)$ be the set  of all pairs of partitions
$(\alpha|\beta)$ such that $|\alpha|+|\beta|=n$. For $(\alpha |\beta)\in \P^2(n)$,
the \textit{signed Young permutation module} $M(\alpha | \beta)$ is the $F\sym{n}$-module defined by
\begin{equation} \label{eq:signedYoung}
M(\alpha | \beta)=\Ind_{\sym{\alpha}\times\sym{\beta}}^{\sym{n}}\big(F(\sym{\alpha})\boxtimes \sgn({\sym{\beta}})\big).
\end{equation}
In \cite[page 651]{SDonkin}, Donkin defines a \emph{signed Young module} to be an indecomposable
direct summand of a signed Young permutation module and proves the following theorem.

\newcommand{\DonkinText}{There exist indecomposable $F\sym{n}$-modules $Y(\lambda | p\mu)$
for $(\lambda | p \mu) \in \P^2(n)$ with the following properties:
\begin{thmlist}
\item if $(\alpha|\beta) \in \P^2(n)$ then $M(\alpha|\beta)$ is
isomorphic to a direct sum of modules $Y(\lambda|p\mu)$
for $(\lambda|p\mu) \in \P^2(n)$ such that $(\lambda|p\mu) \unrhd (\alpha|\beta)$,
\item $[M(\lambda|p\mu) : Y(\lambda|p\mu)] = 1$,
\item if $\lambda = \sum_{i=0}^r p^i\lambda(i)$ and $\mu = \sum_{i=0}^{r-1} p^i\mu(i)$ are the $p$-adic
expansions of $\lambda$ and $\mu$, as defined in~\eqref{eq:padic}, then
$Y(\lambda | p \mu)$ has as a vertex a Sylow $p$-subgroup of the Young subgroup~$\sym{\rho}$,
where~$\rho$ is the partition of $n$ having exactly $|\lambda(i)| + |\mu(i-1)|$ parts of size $p^i$
for each~$i \in \{0,\ldots,r\}$.
\end{thmlist}}

\begin{thm}[Donkin \protect{\cite{SDonkin}}]\label{T:Donkin}
\DonkinText
\end{thm}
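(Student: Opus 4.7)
The plan is to use the Broué correspondence to translate the classification of indecomposable summands of $M(\lambda|p\mu)$ into a computation of its Brauer quotients. The first step is to verify that every $M(\alpha|\beta)$ is a $p$-permutation (trivial source) $F\sym{n}$-module. This uses the hypothesis that $p$ is odd in an essential way: every $p$-subgroup of $\sym{\beta}$ lies in $\alt{\beta}$, so the sign representation restricts trivially to any $p$-subgroup and is therefore a trivial source module. Induction to $\sym{n}$ preserves this property, so $M(\alpha|\beta)$ is a $p$-permutation module for all $(\alpha|\beta) \in \P^2(n)$.

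Fix $(\lambda|p\mu) \in \P^2(n)$ and let $P$ be a Sylow $p$-subgroup of the Young subgroup $\sym{\rho}$, where $\rho$ is the partition described in (iii). The Broué correspondence identifies the indecomposable $p$-permutation summands of $M(\lambda|p\mu)$ having vertex $P$ with the indecomposable projective summands of the Brauer quotient $M(\lambda|p\mu)(P)$, viewed as a module over $F[N_{\sym{n}}(P)/P]$. I would compute this Brauer quotient via the Mackey-type formula for Brauer quotients of induced modules, exploiting the fact that $P$ can be embedded inside $\sym{\lambda} \times \sym{p\mu}$ as a Sylow $p$-subgroup of a Young subgroup whose shape is dictated by the $p$-adic parts of $\lambda$ and $p\mu$. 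The specific choice of $\rho$ is engineered so that a single, canonical double coset yields a distinguished projective indecomposable $Q$ in the Brauer quotient, and one defines $Y(\lambda|p\mu)$ to be the indecomposable $p$-permutation module whose Brauer correspondent at $P$ is $Q$.

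With this construction in hand, parts (ii) and (iii) of the theorem fall out almost for free: the vertex of $Y(\lambda|p\mu)$ is $P$ by design, and the multiplicity-one claim in (ii) reflects that $Q$ appears exactly once in $M(\lambda|p\mu)(P)$. For part (i), I would compute $M(\alpha|\beta)(P)$ by the same Mackey formula; the appearance of $Q$ in $M(\alpha|\beta)(P)$ should be shown to force $(\lambda|p\mu) \unrhd (\alpha|\beta)$ via a familiar comparison between $\rho$ and the shape of a Sylow $p$-subgroup of $\sym{\alpha}\times\sym{\beta}$, analogous to the argument identifying the vertex of an ordinary Young module from the $p$-adic expansion of its defining partition.

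The principal obstacle lies in the combinatorial bookkeeping inside the Mackey formula for the Brauer quotient: one must identify the contributing double cosets, propagate the sign twist from the $\sym{p\mu}$-factor through the Brauer construction, and match the resulting projective indecomposables with $p$-adic data using the wreath product structure of the Sylow normalizer in $\sym{n}$. The sign twist is particularly delicate, because although it is trivial on $P \leq \alt{p\mu}$ it descends to a nontrivial one-dimensional character of $N_{\sym{n}}(P)/P$, and keeping this character correctly attached to the relevant tensor factors of the quotient is essential for isolating $Q$ as an indecomposable projective summand appearing with multiplicity exactly one.
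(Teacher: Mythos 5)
Your overall strategy matches the paper's: establish that each $M(\alpha|\beta)$ is a $p$-permutation module (using that $p$ is odd), compute Brauer quotients with respect to Sylow $p$-subgroups $P_\rho$ of Young subgroups, and define $Y(\lambda|p\mu)$ as the Brou\'{e} correspondent of a distinguished projective summand of $M(\lambda|p\mu)(P_\rho)$. However, there are two substantive gaps. First, the dominance condition in part (i) cannot be deduced merely by ``a familiar comparison between $\rho$ and the shape of a Sylow $p$-subgroup of $\sym{\alpha}\times\sym{\beta}$.'' The Brauer quotient $M(\alpha|\beta)(P_\rho)$ decomposes as a direct sum over $\rho$-types $(\mbf{\gamma}|\mbf{\delta})$ into outer tensor products $W_1(\mbf{\gamma}_0|\mbf{\delta}_0)\boxtimes W_p(\mbf{\gamma}_1|\mbf{\delta}_1)\boxtimes\cdots$, and one must show that the distinguished projective $Q$ can only occur in pieces where $\lambda(i)\unrhd\mbf{\gamma}_i$ and $\mu(i-1)\unrhd\mbf{\delta}_i$ for each $i$. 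For the factors with $i\geq 1$ this is an ordinary-character argument (Young's rule plus the wedge shape of the decomposition matrix); for the $i=0$ factor, which is again a signed Young permutation module $M(\mbf{\gamma}_0|\mbf{\delta}_0)$ of smaller degree, one needs an induction on $n$, which your outline does not mention. The multiplicity-one claim in (ii) is likewise not ``free'' — it requires pinning down the unique $\rho$-type that can contribute $Q$ when $(\alpha|\beta)=(\lambda|p\mu)$.

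The second and more fundamental gap is the projective case: when $\lambda$ is $p$-restricted and $\mu=\varnothing$, the partition $\rho$ is $(1^n)$, so $P_\rho$ is trivial, the Brou\'{e} correspondence degenerates to the identity, and no Mackey-type reduction is available. In this case the existence of a unique summand of $M^\lambda$ appearing with multiplicity one, and its dominance property, cannot be read off from vertex data. The paper handles this via James' Submodule Theorem — specifically the column symmetrizer $\kappa_\t$ and a counting argument over $p$-restricted partitions, plus an extension of the symmetrization argument to $(\alpha|\beta)$-tabloids to get the dominance $(\lambda|\varnothing)\unrhd(\alpha|\beta)$ when $\beta\ne\varnothing$. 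Without this separate Specht-module-theoretic input, the projective part of parts (i) and (ii) remains unproved.
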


Here $(\lambda|p\mu) \unrhd (\alpha|\beta)$ refers to the dominance order on $\P^2(n)$, as defined
in Section~\ref{Sec:PCs} below and, in (iii), $\mu(-1)$ should be interpreted as the partition of $0$.

Donkin's definition of signed Young modules and his proof of his theorem use the Schur superalgebra.
In Section~\ref{S:YoungConstruction} we
give an independent
proof using only basic facts about symmetric group representations and the Brou{\'e} correspondence
for $p$-permutation modules; our proof
shows that the $Y(\lambda|p\mu)$ may be defined by Definition~\ref{D:signedYoung}.
(Theorem~\ref{T:Donkin} characterizes the signed Young module $Y(\lambda|p\mu)$ as the unique
summand of $M(\lambda|p\mu)$ appearing in $M(\alpha|\beta)$ only if $(\lambda|p\mu) \unrhd (\alpha|\beta)$,
so the two definitions are equivalent).
 As a special case we obtain the
existence and main properties
of the \emph{Young modules}, which we define by
$Y^\lambda = Y(\lambda | \varnothing)$. These are precisely the indecomposable summands
of the \emph{Young permutation modules} $M^\alpha = M(\alpha|\varnothing)$.
We state this result, and discuss the connection with \cite{ErdSchr}, and with
the original definition of Young modules via the Schur algebra~\cite{JamesYoung},
in Subsection~\ref{S:EqvDefs}.

In \cite{Hemmer}, Hemmer conjectured, motivated by known results on tilting modules for Schur algebras,
that the signed Young modules are exactly the self-dual
modules for symmetric groups with Specht filtrations.
This was shown to be false in \cite{PagetFiltrations};
the fourth author later proved in \cite{WildonMF} that if $n \ge 66$ and
$G$ is a subgroup of $\sym{n}$ such that the ordinary character of $M = \Ind_G^{\sym{n}}\! F$ is multiplicity free,
then every indecomposable summand of $M$ is a self-dual module with a Specht filtration. Despite
the failure of Hemmer's conjecture, it is clear that signed Young modules are of considerable interest.
In particular, a strong connection between simple Specht modules
and signed Young modules has been established by Hemmer \cite{Hemmer} and by
Danz and the second author \cite{SDanzKJLim}. More precisely, Hemmer showed that every simple Specht module is isomorphic to a signed Young module, and Danz and the second author established their labels.

In Section~\ref{S:Main} we study \textit{signed $p$-Kostka numbers}, defined to be
the multiplicities of signed Young modules as direct summands of signed Young permutation modules.
These generalize the $p$-Kostka numbers considered in \cite{FangHenkeKoenig}, \cite{CGill},
\cite{Henke} and \cite{HenkeKoenig}.
Given the $p$-Kostka numbers for $\sym{n}$ it is routine
to calculate the decomposition matrix of $\sym{n}$ in characteristic~$p$ (see \cite[\S 3]{CGill}). It is therefore
no surprise that a complete understanding of the
$p$-Kostka numbers seems to be out of reach. However, as the references above demonstrate,
many partial results and significant advances have been
obtained.
Our first main theorem is a relation
between signed $p$-Kostka numbers. We refer the reader to Notation \ref{N:Omega} for the definitions of the composition~$\mbf{\delta}_0$ and the set
$\LambdaABR$.

\begin{thm}\label{T:YmultMY} Let $(\alpha|\beta),(\lambda|p\mu)\in\P^2(n)$. Then
\[\bigl[M(p\alpha|p\beta):Y(p\lambda|p^2\mu)\bigr]\leq
\bigl[M(\alpha|\beta):Y(\lambda|p\mu)\bigr].\] Furthermore, if $\mbf{\delta}_0=\varnothing$ for all $(\mbf{\gamma}|\mbf{\delta}) \in \LambdaABR$ then equality holds.
\end{thm}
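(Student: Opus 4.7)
The plan is to reduce both multiplicities via the Broué correspondence for $p$-permutation modules and then carry out a Mackey-type comparison of the Brauer quotients at compatible vertices.

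\textbf{Vertex comparison.} From Theorem~\ref{T:Donkin}(iii), if $\rho$ denotes the partition of $n$ recording the vertex-shape of $Y(\lambda\vthinspace|\vthinspace p\mu)$, so $\rho$ has $|\lambda(i)|+|\mu(i-1)|$ parts of size $p^i$, then the $p$-adic expansions of $p\lambda$ and $p^2\mu$ are obtained from those of $\lambda$ and $p\mu$ by the shift $i\mapsto i+1$, so the vertex-shape of $Y(p\lambda\vthinspace|\vthinspace p^2\mu)$ is exactly $p\rho$, i.e.\ each part $p^i$ of $\rho$ becomes a part $p^{i+1}$. Fix a Sylow $p$-subgroup $Q\le\sym{\rho}$ of $\sym{n}$ and, via the blocking embedding $\sym{n}\hookrightarrow\sym{pn}$ that sends $k$ to the trivial permutation of the block $\{(k-1)p+1,\ldots,kp\}$, obtain a Sylow $p$-subgroup $Q^*\le\sym{p\rho}$ of $\sym{pn}$ containing the image of $Q$.

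\textbf{Broué reduction.} By the Broué correspondence, for any $p$-permutation module $N$ and any indecomposable $p$-permutation summand $X$ with vertex $V$, the multiplicity $[N:X]$ equals the multiplicity of the Broué correspondent $X(V)$ in the Brauer quotient $N(V)$, as $FN_G(V)/V$-modules. The theorem therefore reduces to the inequality
\[
\bigl[M(p\alpha|p\beta)(Q^*):Y(p\lambda|p^2\mu)(Q^*)\bigr]\le \bigl[M(\alpha|\beta)(Q):Y(\lambda|p\mu)(Q)\bigr],
\]
with equality under the hypothesis on $\mbf{\delta}_0$.

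\textbf{Mackey analysis.} Apply the Mackey-type formula for the Brauer quotient of an induced $p$-permutation module: $M(\alpha|\beta)(Q)$ decomposes as a direct sum of signed Young permutation modules for $N_{\sym{n}}(Q)/Q$, indexed by the $(\sym{\alpha}\times\sym{\beta})$--$Q$ double cosets whose representative conjugates $Q$ into $\sym{\alpha}\times\sym{\beta}$, and these double cosets are in bijection with $\LambdaABR$ as in Notation~\ref{N:Omega}. Running the same analysis on $M(p\alpha|p\beta)$ at $Q^*$ yields a parametrising set that is naturally stratified by the elements of $\LambdaABR$ together with the choice of a component $\mbf{\delta}_0$ (corresponding to the ``level-$0$'' parts of $\rho$, which disappear when $\rho$ is replaced by $p\rho$). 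The stratum with $\mbf{\delta}_0=\varnothing$ matches the $\sym{n}$-picture summand-by-summand via the blocking embedding, and contributes equal multiplicities to both sides. The strata with $\mbf{\delta}_0\ne\varnothing$ produce additional signed Young permutation modules for $N_{\sym{pn}}(Q^*)/Q^*$ whose sign-structure at the bottom level of the wreath-product normaliser is incompatible with $Y(p\lambda|p^2\mu)(Q^*)$, so they contribute nothing to the left-hand side. This gives the inequality, and their absence under the stated hypothesis gives equality.

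\textbf{Main obstacle.} The principal difficulty is the sign bookkeeping in the Mackey decomposition. The normaliser quotient $N_{\sym{pn}}(Q^*)/Q^*$ carries a natural wreath-product-like structure with one extra copy of $\sym{p}$ at the bottom relative to $N_{\sym{n}}(Q)/Q$, and one must track precisely how the sign character of the $\sym{p\beta}$ factor descends through induction, restriction and the Brauer functor, and then identify the resulting signed Young permutation modules for the quotient. Verifying that $\mbf{\delta}_0$-nonempty strata genuinely fail to contain $Y(p\lambda|p^2\mu)(Q^*)$ as a summand will likely be reduced, by induction on $|\lambda|+|p\mu|$, to Theorem~\ref{T:Donkin}(i) applied to signed Young modules for smaller symmetric groups.
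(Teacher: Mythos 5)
Your overall strategy is precisely the paper's: reduce via the Brou\'e correspondence to a comparison of Brauer quotients at the vertices $P_\rho$ and $P_{p\rho}$ of $Y(\lambda|p\mu)$ and $Y(p\lambda|p^2\mu)$, decompose those quotients using Proposition~\ref{P:Broueperm} indexed by $\LambdaABR$, and observe that the strata with $\mbf{\delta}_0\neq\varnothing$ contribute nothing on the left while possibly contributing on the right. One correction to your account of the parametrising sets: $\Lambda\bigl((p\alpha|p\beta),p\rho\bigr)$ is in natural bijection with $\LambdaABR$ (obtained by prepending a pair of empty parts at the new bottom level), so both multiplicities are sums over the \emph{same} set $\LambdaABR$, and the comparison is term by term, namely
\[
\prod_{i=0}^r \bigl[W_{p^{i+1}}(\mbf{\gamma}_i|\mbf{\delta}_i):Q_{p^{i+1}}(\lambda(i)|\mu(i-1))\bigr]
\quad\text{versus}\quad
\prod_{i=0}^r \bigl[W_{p^i}(\mbf{\gamma}_i|\mbf{\delta}_i):Q_{p^i}(\lambda(i)|\mu(i-1))\bigr].
\]

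The gap you call the ``main obstacle'' is genuine, and your suggested closure (induction on $|\lambda|+|p\mu|$ via Theorem~\ref{T:Donkin}(i)) is not how the paper proceeds, nor does it obviously work: the module you need to exclude lives for a wreath product $N_p \wr \sym{m}$, not a smaller symmetric group, so Theorem~\ref{T:Donkin}(i) does not apply directly. The paper instead isolates Lemma~\ref{L:multWQ}: parts (i) and (ii) show the factors agree for all $i\geq 1$ and for $i=0$ when $\mbf{\delta}_0=\varnothing$, while part (iii) shows $[W_p(\mbf{\gamma}_0|\mbf{\delta}_0):Q_p(\lambda(0)|\varnothing)]=0$ when $\mbf{\delta}_0\neq\varnothing$. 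Part (iii) in turn follows not from induction but from Proposition~\ref{P:dominates}, which pins down the projective summands $\overline{Q}(\alpha'|\beta')$ of $\overline{W}_p(\mbf{\gamma}_0|\mbf{\delta}_0)$ by a direct count: restricting to the base group $C_2^m$ shows any such summand has $|\beta'|=|\mbf{\delta}_0|>0$, so it cannot equal $\overline{Q}_p(\lambda(0)|\varnothing)$. That sign-count on the base group is the clean replacement for the bookkeeping you flag as the hard point.
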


Example \ref{Eg:strictineq} shows that strict inequality may
hold in Theorem~\ref{T:YmultMY}. This is an important fact, since it appears to rule out
a routine proof of Theorem~\ref{T:YmultMY} using the theory of weights for the Schur superalgebra:
we explain this obstacle later in the introduction.
However, in Corollary \ref{C:asymptotic}, we obtain the following asymptotic stability of the signed $p$-Kostka numbers
\[ \bigl[M(\alpha|\beta):Y(\lambda|p\mu)\bigr]\geq
\bigl[M(p\alpha|p\beta):Y(p\lambda|p^2\mu)\bigr]=
\bigl[M(p^2\alpha|p^2\beta):Y(p^2\lambda|p^3\mu)\bigr]=\cdots.\]
If $\beta = \varnothing$ then the condition on $\mbf{\delta}_0$ holds for all
$(\mbf{\gamma}|\mbf{\delta}) \in \LambdaABR$ and
Theorem~\ref{T:YmultMY} specializes to
Gill's result \cite[Theorem 1]{CGill} that $[M^{p \alpha} : Y^{p \lambda}] = [M^\alpha : Y^\lambda]$
for all partitions~$\alpha$ and $\lambda$ of $n$.

Our second main theorem describes
the relation between signed $p$-Kostka numbers
for partitions differing by a $p$-power of a partition.
Let $\C^2(m)$ be the set consisting of all pairs of compositions
$(\alpha|\beta)$ such that $|\alpha|+|\beta|=m$. We refer the reader to Equation \ref{eq:ellp}
in Subsection~\ref{S:Apps} for the definition of $\ell_p(\lambda|p\mu)$.

\begin{thm}\label{T:2}
Let $m$, $n$ and $k$ be natural numbers.
Let $(\pi|\widetilde\pi)\in \C^2(m)$, $(\lambda|p\mu) \in\P^2(m)$, $(\phi|\widetilde\phi)\in \C^2(n)$ and $(\alpha|p\beta) \in\P^2(n)$.
If $k>\ell_p(\lambda|p\mu)$, then
\[ \begin{split}
\bigl[M(\pi+p^k\phi|\widetilde\pi+p^k\widetilde\phi):Y(\lambda+{}&{}p^k\alpha|p(\mu+p^k\beta))\bigr]
\\
& \geq \bigl[M(\pi|\widetilde\pi):Y(\lambda|p\mu)\bigr]
\bigl[M(p\phi|p\widetilde\phi):Y(p\alpha|p^2\beta)\bigr]. \end{split}
 \]
Moreover, if $p^k>\mathrm{max}\{\pi_1,\widetilde{\pi}_1\}$, then equality holds.
\end{thm}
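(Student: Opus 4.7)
The plan is to reduce the theorem to a gluing result for signed Young modules whose $p$-adic labels have disjoint supports. The first step is induction in stages: the Young subgroup indexing the outer signed Young permutation module factors as
\[
\sym{\pi+p^k\phi}\times\sym{\widetilde\pi+p^k\widetilde\phi}\cong(\sym{\pi}\times\sym{\widetilde\pi})\times(\sym{p^k\phi}\times\sym{p^k\widetilde\phi})\leq\sym{m}\times\sym{p^kn},
\]
and after rearranging the sign and trivial factors on the inducing representation one obtains
\[
M(\pi+p^k\phi|\widetilde\pi+p^k\widetilde\phi)\cong\Ind_{\sym{m}\times\sym{p^kn}}^{\sym{m+p^kn}}\bigl(M(\pi|\widetilde\pi)\boxtimes M(p^k\phi|p^k\widetilde\phi)\bigr).
\]
Decomposing each tensor factor into signed Young modules reduces the problem to counting multiplicities of $Y(\lambda+p^k\alpha|p(\mu+p^k\beta))$ inside induced tensor products of the form $\Ind(Y(\nu|p\xi)\boxtimes Y(\sigma|p\tau))$.

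The heart of the argument is a gluing lemma. When $k>\ell_p(\lambda|p\mu)$, the $p$-adic expansions of $(\lambda|p\mu)$ and of $(p^k\alpha|p^{k+1}\beta)$ occupy disjoint ranges of $p$-powers, so their concatenation is the $p$-adic expansion of $(\lambda+p^k\alpha|p(\mu+p^k\beta))$. Theorem~\ref{T:Donkin}(iii) then identifies the vertex of $Y(\lambda+p^k\alpha|p(\mu+p^k\beta))$, up to conjugacy, with the direct product of the vertices of $Y(\lambda|p\mu)$ and $Y(p^k\alpha|p^{k+1}\beta)$. Combined with parts~(i) and~(ii) of Theorem~\ref{T:Donkin}, a Brou{\'e}-correspondence computation should produce $Y(\lambda+p^k\alpha|p(\mu+p^k\beta))$ as a summand of $\Ind(Y(\lambda|p\mu)\boxtimes Y(p^k\alpha|p^{k+1}\beta))$ with multiplicity at least one. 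This yields the lower bound
\[
\bigl[M(\pi+p^k\phi|\widetilde\pi+p^k\widetilde\phi):Y(\lambda+p^k\alpha|p(\mu+p^k\beta))\bigr]\geq\bigl[M(\pi|\widetilde\pi):Y(\lambda|p\mu)\bigr]\cdot\bigl[M(p^k\phi|p^k\widetilde\phi):Y(p^k\alpha|p^{k+1}\beta)\bigr],
\]
and the second factor equals $[M(p\phi|p\widetilde\phi):Y(p\alpha|p^2\beta)]$ by the stabilization in Corollary~\ref{C:asymptotic}.

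For the equality statement under $p^k>\max\{\pi_1,\widetilde\pi_1\}$, the key observation is that every signed Young summand $Y(\nu|p\xi)$ of $M(\pi|\widetilde\pi)$ must satisfy $\ell_p(\nu|p\xi)<k$: the hypothesis bounds each part of $\pi,\widetilde\pi$ by $p^k-1$, so a Sylow $p$-subgroup of $\sym{\pi}\times\sym{\widetilde\pi}$ has exponent at most $p^{k-1}$, and because a vertex of any indecomposable summand of the $p$-permutation module $M(\pi|\widetilde\pi)$ sits inside such a Sylow subgroup up to conjugacy, Theorem~\ref{T:Donkin}(iii) forces $\ell_p(\nu|p\xi)<k$. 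A complementary separation holds for the summands of $M(p^k\phi|p^k\widetilde\phi)$, whose $p$-adic labels begin in position $k$. Together these separations mean that the only pair $(\nu|p\xi),(\sigma|p\tau)$ whose induced tensor product can contribute an indecomposable summand labelled $(\lambda+p^k\alpha|p(\mu+p^k\beta))$ is $(\nu|p\xi)=(\lambda|p\mu)$ and $(\sigma|p\tau)=(p^k\alpha|p^{k+1}\beta)$, and a Mackey refinement of the gluing lemma pins this contribution to exactly one, yielding equality. The main obstacle will be the gluing lemma and its uniqueness refinement: one must both establish the lower bound of one in the induced tensor product and rule out extraneous summands arising from vertices sitting strictly inside the expected product vertex or sitting across the $p$-adic boundary at position $k$.
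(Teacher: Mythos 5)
Your opening step contains a fatal error that breaks the whole argument. You assert a factorization of Young subgroups
$\sym{\pi+p^k\phi}\times\sym{\widetilde\pi+p^k\widetilde\phi}\cong(\sym{\pi}\times\sym{\widetilde\pi})\times(\sym{p^k\phi}\times\sym{p^k\widetilde\phi})$, but these are different groups: the left side corresponds to the \emph{partwise sum} $\pi+p^k\phi=(\pi_1+p^k\phi_1,\pi_2+p^k\phi_2,\ldots)$, whose Young subgroup has factors $\sym{\pi_i+p^k\phi_i}$, while the right side is the strictly smaller subgroup with factors $\sym{\pi_i}\times\sym{p^k\phi_i}$. Consequently the displayed isomorphism
$M(\pi+p^k\phi|\widetilde\pi+p^k\widetilde\phi)\cong\Ind^{\sym{m+p^kn}}_{\sym{m}\times\sym{p^kn}}(M(\pi|\widetilde\pi)\boxtimes M(p^k\phi|p^k\widetilde\phi))$ is false. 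Inducing in stages shows that the right-hand side is actually $M(\pi\bbullet p^k\phi\,|\,\widetilde\pi\bbullet p^k\widetilde\phi)$, where $\bbullet$ is concatenation, and this module has $M(\pi+p^k\phi|\widetilde\pi+p^k\widetilde\phi)$ as a proper direct summand (compare dimensions: they differ by the product of binomials $\binom{\pi_i+p^k\phi_i}{\pi_i}\binom{\widetilde\pi_j+p^k\widetilde\phi_j}{\widetilde\pi_j}$). This matters for the direction of the inequality: a direct-summand relation $M\,|\,M'$ only yields $[M:Y]\leq[M':Y]$, so establishing the desired lower bound for the concatenated module does not transfer to the partwise-sum module that the theorem is actually about.

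Beyond that first step, the ideas are sound but in the wrong place. Your "gluing lemma," observing that the $p$-adic labels of $(\lambda|p\mu)$ and $(p^k\alpha|p^{k+1}\beta)$ occupy disjoint ranges so that the vertex of $Y(\lambda+p^k\alpha|p(\mu+p^k\beta))$ is the product of vertices, is precisely Lemma~\ref{L:twistBroue}, and the separation argument for the equality case (bounding $\ell_p$ of summands of $M(\pi|\widetilde\pi)$ by the exponent of a Sylow subgroup of $\sym{\pi}\times\sym{\widetilde\pi}$) is correct. However, the paper avoids the module-level decomposition altogether: it works directly on Brauer quotients, proving in Proposition~\ref{P: 3.1} that $M(\pi|\widetilde\pi)(P_\rho)\boxtimes M(p^k\phi|p^k\widetilde\phi)(P_{p^k\gamma})$ is a direct summand of $M(\pi+p^k\phi|\widetilde\pi+p^k\widetilde\phi)(P_{\rho\bbullet p^k\gamma})$ by exhibiting an explicit split injection on the $(\alpha|\beta)$-tabloid bases, with equality when $p^k>\max\{\pi_1,\widetilde\pi_1\}$. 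Because the summand relation there is at the level of Brauer quotients of the target module rather than a module containing it, the multiplicity inequality coming from Theorem~\ref{BC1}(ii) runs in the correct direction. To rescue your outline you would need to replace the faulty module decomposition by a statement of exactly this kind at the Brauer-quotient or tabloid level.
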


In particular, taking $\phi = \alpha = (r)$ and $\widetilde{\phi} = \beta =
\varnothing$, we see that
$\bigl[M(\pi+p^k(r)| \widetilde{\pi}) : Y(\lambda+p^k(r)|p\mu)\bigr] \ge
M\bigl[(\pi|\widetilde{\pi}) : Y(\lambda|p\mu)\bigr]$
with equality whenever $p^k > \max\{\pi_1,\widetilde{\pi_1}\}$.

Our third main theorem
classifies the indecomposable signed Young permutation modules.

\begin{thm}\label{T:IndecSigned}
 Let $(\alpha|\beta)\in\P^2(n)$. The signed Young permutation module $M(\alpha|\beta)$ is indecomposable if and only if one of the following conditions holds.
\begin{enumerate}
  \item [(i)] $(\alpha|\beta)=((m)|(n))$ for some non-negative integers $m,n$ such that either
  \begin{enumerate}
  \item [(a)] $m=0$,
  \item [(b)] $n=0$, or
  \item [(c)] $m+n$ is divisible by $p$.
  \end{enumerate}
  \smallskip
  \item [(ii)] $(\alpha|\beta)$ is either $((kp-1,1)|\varnothing)$ or $(\varnothing|(kp-1,1))$
  for some $k \in \N$.
\end{enumerate}
In cases \emph{(i)(a)} and \emph{(i)(b)}, we have
 $\End_{F\sym{n}}M(\alpha|\beta) \cong F$. In the remaining cases we have
$\End_{F\sym{n}}M(\alpha|\beta) \cong F[x]/\langle x^2 \rangle$.
\end{thm}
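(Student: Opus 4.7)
The plan is to analyze $M(\alpha|\beta)$ through its endomorphism algebra in tandem with Theorem~\ref{T:Donkin}. Since $F\boxtimes\sgn$ is one-dimensional, Mackey's formula yields
\[
\dim_F \End_{F\sym{n}} M(\alpha|\beta) = \bigl|\bigl\{g \in H\backslash\sym{n}/H : (F\boxtimes\sgn)|_{H \cap gHg^{-1}} \cong {}^g(F\boxtimes\sgn)|_{H \cap gHg^{-1}}\bigr\}\bigr|
\]
where $H = \sym{\alpha}\times\sym{\beta}$. Parametrising double cosets by non-negative integer matrices with row and column sums the concatenated composition $(\alpha,\beta)$, sign-compatibility forces every entry in the two off-diagonal blocks (where an $\alpha$-row meets a $\beta$-column or vice versa) to be at most~$1$. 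A direct enumeration shows $\dim\End M(\alpha|\beta)=1$ exactly in cases \emph{(i)(a)} and \emph{(i)(b)}; $\dim\End M(\alpha|\beta)=2$ precisely in the shapes $((m)|(n))$ with $m,n\geq 1$, $((m,1)|\varnothing)$, or $(\varnothing|(m,1))$ (irrespective of $p$); and $\dim\End M(\alpha|\beta)\geq 3$ otherwise.

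For $\dim\End=1$ the module is one-dimensional and $\End\cong F$, handling \emph{(i)(a)} and \emph{(i)(b)}. For $\dim\End=2$, since signed Young modules are absolutely indecomposable, $\End M$ is either $F\oplus F$ (whence $M$ splits into two non-isomorphic signed Young summands each with $\End\cong F$) or $F[x]/\langle x^2\rangle$ (whence $M$ is indecomposable). To distinguish, let $\theta\in\End M$ be the non-identity Mackey basis element, realised as the $H$-equivariant lift $u_0\mapsto\sum_{x\in H/(H\cap gHg^{-1})} xg\otimes u_0$ for the relevant double coset representative $g$. One computes $\theta^2=c\cdot 1+d\cdot\theta$ in $\End M$ by direct group-algebra expansion; modelled on the classical case $M^{(N-1,1)}$, where $\theta\colon e_i\mapsto\sum_{j\neq i}e_j$ gives $\theta^2=(N-1)\cdot 1+(N-2)\cdot\theta$ and discriminant $d^2+4c=N^2$, the signed case yields the same discriminant $N^2$ with $N=|\alpha|+|\beta|$. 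This vanishes modulo $p$ precisely when $p\mid N$, i.e., when $p\mid m+n$ in case \emph{(i)(c)} or when $m=kp-1$ in case \emph{(ii)}; in the vanishing case $\theta-N/2$ is nilpotent (using odd characteristic) and $\End\cong F[x]/\langle x^2\rangle$, while otherwise $\theta$ has two distinct eigenvalues in $F$ and $M$ splits.

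For $\dim\End\geq 3$, exhibit at least two distinct signed Young summands of $M(\alpha|\beta)$. When $\beta=\varnothing$, this recovers the classical fact that $M^\alpha$ is indecomposable only for $\alpha\in\{(n),(kp-1,1)\}$; when $\alpha=\varnothing$, apply the $\sgn$-twist $M(\alpha|\beta)\otimes\sgn=M(\beta|\alpha)$. For both $\alpha,\beta$ nonempty with at least one having more than one part, apply Theorem~\ref{T:2} to reduce $M(\alpha|\beta)$ to signed Young permutation modules on smaller supports, then use Theorem~\ref{T:Donkin}(i) to exhibit a summand beyond the dominance-maximal $Y$. The main obstacle is verifying that $d^2+4c=N^2$ extends from the classical Young case to the signed case $((m)|(n))$: the sign character on $\sym{\beta}$ introduces signs into the group-algebra expansion of $\theta^2$, which must be tracked and rearranged to reproduce the uniform formula $N^2$.
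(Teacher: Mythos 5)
Your route is genuinely different from the paper's: you work through the Hecke algebra $\End_{F\sym{n}}M(\alpha|\beta)$ via Mackey's formula, reduce to the quadratic relation satisfied by the non-identity basis element, and try to read off (in)decomposability from the discriminant; the paper instead uses the Littlewood--Richardson rule to produce a Specht filtration, separates Specht factors into distinct $p$-blocks via $p$-cores when possible, and for the remaining case $((m)|(n))$ with $p\mid m+n$ pins down the Loewy structure with Peel's description of hook Specht modules. Your sign-compatibility constraint on the off-diagonal blocks of the double-coset matrices is a nice observation and the 2-dimensional enumeration for $((m)|(n))$ with $m,n\geq1$ is correct.

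However, the proposal as written has gaps that prevent it from being a proof. First, the central discriminant claim $d^2+4c=N^2$ for the signed modules $M((m)|(n))$ is precisely where all the difficulty lives, and you explicitly flag it as ``the main obstacle'' without resolving it; the signs in the tabloid action are genuinely delicate (the coefficients $\epsilon(A,A')$ in the expansion $\theta(e_A)=\sum\epsilon(A,A')e_{A'}$ are not uniformly $+1$), and an unchecked sign easily flips the quadratic to one whose discriminant is \emph{not} $N^2$. Second, the enumeration of $\dim\End M(\alpha|\beta)$ into the three regimes $1$, $2$, $\ge3$ is asserted but not carried out; it needs to be verified that, e.g., $((m)|(n,1))$ with $m,n\geq1$ or $((m,1)|(n,1))$ really do land in the $\ge3$ regime. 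Third, and most importantly, $\dim\End M\geq 3$ does \emph{not} imply $M$ is decomposable: an indecomposable $p$-permutation module can perfectly well have a local endomorphism ring of dimension $\geq3$ (e.g.\ a projective indecomposable with Cartan number $\geq3$). So the $\dim\End\geq3$ case requires an independent decomposition argument --- this is exactly what the paper supplies by exhibiting Specht factors in different $p$-blocks for $M((kp-1,1)|(m))$ with $m\geq2$ and by using Gill's result for the cases that reduce to ordinary Young permutation modules. Your appeal to Theorem~\ref{T:2} and Theorem~\ref{T:Donkin}(i) here is too vague to close this gap.
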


In particular, Theorem~\ref{T:IndecSigned} classifies all indecomposable
Young permutation modules up to isomorphism, recovering \cite[Theorem 2]{CGill}
for fields of odd characteristic.
Note that the Young permutation module
$M^{(n-1,1)} = M\bigl( (n-1,1) | \varnothing \bigr) = M\bigl((n-1)|(1)\bigr)$ appears in both parts (i) and (ii).
If $M(\alpha|\beta)$ is indecomposable then there exist unique
partitions $\lambda$ and $\mu$ such that $M(\alpha|\beta) \cong
Y(\lambda|p\mu)$. These  partitions are determined in Proposition~\ref{prop:labels}.

\subsection*{Schur algebras}
Our results may be applied to obtain corollaries
on modules for the Schur algebra.
Fix $n$, $d \in \N$ with $d \ge n$ and let $\GL_d(F)$ be the general linear group
of $d \times d$ matrices over $F$.
Let $\rho : \GL_d(F) \rightarrow \GL_m(F)$ be a representation of $\GL_d(F)$ of dimension $m$.
We say that $\rho$ is a \emph{polynomial representation
of degree~$n$} if the matrix coefficients $\rho(X)_{ij}$
for each $i,j \in \{1,\ldots, m\}$ are polynomials of degree $n$ in the coefficients of the
matrix $X$.
Given a polynomial representation $\rho : \GL_d(F) \rightarrow \GL(V)$
of degree $n$, the image of $V$ under
the \emph{Schur functor} $f$
is the subspace of~$V$ on which
the diagonal matrices $\mathrm{diag} (a_1,\ldots,a_d) \in \GL_d(F)$ act as $a_1\ldots a_n$.
It is easily seen that $f(V)$ is preserved by the permutation matrices in $\GL_d(F)$
that fix the final $d-n$ vectors in the standard basis of~$F^d$. Thus
$f(V)$ is a module for $F\sym{n}$.

The category of polynomial representations of $\GL_d(F)$ of degree $n$ is equivalent to
the category of modules for the Schur algebra $S_F(d,n)$. We refer the reader to \cite{GreenGLn}
for the definition of $S_F(d,n)$ and further background.
In this setting, the Schur functor may be defined by $V \mapsto eV$ where
$e \in S_F(d,n)$ is an idempotent such that $eS_F(d,n)e \cong F\sym{n}$. It follows that
$f$ is an exact functor from the category of polynomial representations
of $\GL_d(F)$ of degree $n$ to the category of $F\sym{n}$-modules.

Let $E$ denote the natural $\GL_d(F)$-module. Given $\alpha \in \P(n)$,
let $\Sym^\alpha(E)$ and $\bigwedge^\beta(E)$
denote the corresponding
divided symmetric and exterior powers of~$E$, defined as quotient modules of $E^{\otimes n}$.
The \emph{mixed powers}
$\Sym^\alpha E \otimes \bigwedge^\beta E$ for $(\alpha | \beta) \in \P^2(n)$
generate the category of $\GL_d(F)$-modules of degree $n$.
In \cite{SDonkin}, Donkin defines a \emph{listing module} to be an indecomposable
direct summand of a mixed power. (As the name suggests, listing modules generalize
tilting modules).
By \cite[Proposition~3.1c]{SDonkin}, for each $(\lambda|\mu) \in \P^2(n)$
 there
exists a unique listing module $\List(\lambda|p\mu)$ such that
$f(\List(\lambda|p\mu)) \cong Y(\lambda|p\mu)$.
By \cite[Proposition~3.1a]{SDonkin}, we have
$f(\Sym^\alpha E \otimes \bigwedge^\beta E) \cong
M{(\alpha | \beta)}$. Moreover, by \cite[Proposition~3.1b]{SDonkin},
the Schur functor induces an isomorphism
\[ \End_{\GL_d(F)}\bigl(\Sym^\alpha E \otimes \textstyle{\bigwedge^\beta} E \bigr)
\cong \End_{\sym{n}}\bigl( M(\alpha|\beta)\bigr). \]
Thus
each of our three main theorems
has an immediate translation to a result
on multiplicities of listing modules in certain mixed powers.
For example Theorem~\ref{T:IndecSigned} classifies the indecomposable $\GL_d(F)$-mixed powers
and shows that each has an endomorphism
algebra, as a $\GL_d(F)$-module, of dimension at most~$2$.

\subsection*{Steinberg Tensor Product Formula}
As Gill remarks in \cite{CGill}, some of his results can be obtained using weight spaces
and the Steinberg
Tensor Product Theorem for irreducible representations of $\GL_d(F)$. We explain the connection here, since
this remark is also relevant to this work.
Let $\alpha$ be a composition of $n$ where $d \ge n$ and let $\xi_\alpha \in S_F(d,n)$ be
the idempotent defined in \cite[Section 3.2]{GreenGLn} such that
$\xi_\alpha V$ is the $\alpha$-weight space, denoted~$V_\alpha$,
of the $S_F(d,n)$-module $V$; the idempotent $e$ defining the Schur functor is $\xi_{(1^n)}$.
For $\lambda$ a partition of $n$,
let $L(\lambda)$ denote the irreducible representation of $\GL_d(F)$ with highest weight $\lambda$,
thought of as a module for $S_F(d,n)$. Let $\Proj(\lambda)$ be the projective cover of~$L(\lambda)$.
By James' original definition of Young modules (this is shown to be equivalent to ours in Subsection~\ref{S:EqvDefs}), we have $Y^\lambda = f(\Proj(\lambda))$; moreover
$[M^\alpha : Y^\lambda] = [\Sym_\alpha(E) : \Proj(\lambda)] =
\dim \Hom( S(d,n)\xi_\alpha, L(\lambda) )
= \dim \xi_\alpha L(\lambda)$. 
(Here $\Sym_\alpha(E) \subseteq E^{\otimes n}$
is the contravariant dual, as defined in \cite[2.7a]{GreenGLn},
of  the quotient module
$\Sym^\alpha(E)$ of~$E^{\otimes n}$).
Thus
\begin{equation}\label{eq:weights} [M^\alpha : Y^\lambda] = \dim L(\lambda)_\alpha. \end{equation}

As an example of this relationship between $p$-Kostka numbers and dimensions of weight spaces,
we use~\eqref{eq:weights} to deduce Theorem 1 in \cite{CGill}.
By the Steinberg Tensor Product Theorem, $L(p\lambda) = L(\lambda)^F$, where $F$
is the Frobenius map, acting on representing matrices by sending each entry to its $p$th power.
Clearly there is a canonical vector space isomorphism $(L(\lambda)^F)_{p\alpha} \cong L(\lambda)_{\alpha}$.
Therefore
\[ [M^{p\alpha}:  Y^{p\lambda}] = \dim L(p\lambda)_{p\alpha} = \dim L(\lambda)_\alpha =
[M^\alpha : Y^\lambda] \]
as required.

\subsection*{Schur superalgebras}
Our Theorem~\ref{T:YmultMY} generalizes the result just proved, so it is natural to ask if it can
be proved in a similar way, replacing
the Schur algebra with the Schur superalgebra defined in \cite{SDonkin}.
Let $a, b \in \N$. Given $(\lambda | p\mu) \in \P^2(n)$ where $\lambda$ has at most $a$ parts
and $\mu$ has at most $b$ parts,
let $L(\lambda|p\mu)$ denote the irreducible module
of highest weight $(\lambda|p\mu)$ for the Schur superalgebra $S(a|b,n)$, defined in \cite[page 661]{SDonkin}.
By \cite[\S2.3]{SDonkin}, we have
\begin{equation}
\label{eq:weightsSuper} [M(\alpha|\beta):Y(\lambda|p\mu)] = \dim L(\lambda|p\mu)_{(\alpha|\beta)}.\end{equation}
generalizing~\eqref{eq:weights}.

Let $\GL(a|b)$ denote the super general linear group defined in \cite[Section 2]{BrundanKujawa}.
Since $E^{\otimes n}$ is a generator for the category of polynomial representations of $\GL(a|b)$
of degree $n$, it follows from~\cite[page 660, (1)]{SDonkin} that the category of such modules
is equivalent to the module category of $S(a|b,n)$.
Taking the even degree part of $\GL(a|b)$ recovers $\GL_a(F) \times \GL_b(F)$. (More precisely,
the even degree part is isomorphic to the product of the affine group schemes corresponding to these two
general linear groups). The Frobenius map is identically zero on the odd degree part of $\GL(a|b)$,
so induces a map $F : \GL(a | b) \rightarrow \GL_a(F) \times \GL_b(F)$. Let $F^\star$ be the corresponding
inflation functor, sending modules for $\GL_a(F) \times \GL_b(F)$ to modules for $\GL(a|b)$. By
\cite[Remark 4.6(iii)]{BrundanKujawa} we have
\[ L(p\lambda | p\mu) = F^\star (L(\lambda) \boxtimes L(\mu)) \]
where $\boxtimes$ denotes an outer tensor product.
Taking weight spaces we get
$L(p\lambda | p\mu)_{(p\alpha | p\beta)} \cong L(\lambda)_\alpha \boxtimes L(\mu)_{\beta}$.
By~\eqref{eq:weightsSuper} we have
$[M(p\alpha|p\beta):Y(p\lambda | p\mu)] =
\dim L(\lambda)_\alpha  \dim L(\mu)_\beta$.
Replacing $\mu$ with $p\mu$ and applying the
Steinberg Tensor Product Formula,
this implies the asymptotic stability of signed $p$-Kostka numbers
mentioned after Theorem~\ref{T:YmultMY}.

Stated for $\GL(a|b)$-modules, the remaining part of Theorem~\ref{T:YmultMY} becomes
\[ \dim L(p\lambda|p\mu^2)_{(p\alpha|p\beta)} \le \dim L(\lambda|p\mu)_{(\alpha|\beta)}. \]
This does not follow from the results mentioned so far,
or from the version of the Steinberg Tensor Product Theorem
for $\GL(a|b)$-modules proved in \cite{Kujawa}, because the module
on the right-hand side is not an inflation. Moreover, translated into this
setting, a special case of Example~\ref{Eg:strictineq} shows that
$\dim L((1)|\varnothing)_{(\varnothing|(1))} = 1$ whereas
$\dim L((p)|\varnothing)_{(\varnothing|(p))} = \dim L((p))_\varnothing \dim
L(\varnothing)_{(p)} = 0$, so it is certainly not
the case that equality always holds. (Further examples of this type are given by
the general case of Example~\ref{Eg:strictineq}).
Whether or not a proof using supergroups is possible, the authors believe
that since Theorem~\ref{T:YmultMY}
can be stated within the context of symmetric groups,
it deserves a proof in this setting.

\subsection*{Klyachko's Multiplicity Formula}
Klyachko's Multiplicity Formula \cite[Corollary 9.2]{Klyachko}
expresses the $p$-Kostka number $[M^\alpha : Y^\lambda]$
in terms of $p$-Kostka numbers for $p$-restricted partitions.
Our Corollary~\ref{cor:Klyachko} gives a generalization to signed Young modules.
Specializing this result we obtain a symmetric group proof of Klyachko's formula in the form
\begin{equation}
\label{eq:Klyachko}
[M^\alpha : Y^\lambda] = \sum_{(\mbf{\gamma}|\varnothing) \in \Lambda((\alpha|\varnothing),\rho)}
\prod_{i=0}^r [M^{\mbf{\gamma}_i} : Y^{\lambda(i)}], \end{equation}
where $\lambda = \sum_{i=0}^r p^i\lambda(i)$ is the $p$-adic expansion of $\lambda$,
$\rho$ is the partition defined in Theorem~\ref{T:Donkin}(iii) and the set
$\Lambda\bigl((\alpha|\varnothing),\rho\bigr)$ is as defined in Notation~\ref{N:Omega}.


\subsection*{Outline}
In Section \ref{S:Prel} we recall the main ideas concerning the Brauer
construction for $p$-permutation modules and set up our notation for symmetric group modules
and modules for wreath products.
In Section~\ref{S:QuotientYoungPermutation} we find the Brou\'e quotients of signed Young
permutation modules. In Section~\ref{S:YoungConstruction}
we use these results, together with James' Submodule Theorem,
to define Young modules and signed Young modules in the
symmetric group setting. We then prove Donkin's Theorem~\ref{T:Donkin}.
We give some immediate corollaries of this theorem in Section~\ref{S:Apps}. In Sections~\ref{S:Main} and~\ref{S:Last}, we prove
Theorems \ref{T:YmultMY}, \ref{T:2} and \ref{T:IndecSigned}.

\section{Preliminaries}\label{S:Prel}

We work with left modules throughout.
For background on vertices and sources and other results from modular representation theory
we refer the reader to \cite{Alperin}.
For an account of the representation theory of the symmetric group we refer the
reader to \cite{james1978representation} or \cite{JK}, or for more recent developments, to \cite{KleshchevAMS}.

\subsection{Indecomposable summands}
Let $G$ be a finite group.
Let $M$ and $N$ be $FG$-modules. We write $N\mid M$ if $N$ is isomorphic to a direct summand of $M$.
We have already used the notation
$[M:N]$ for the number of summands in a direct sum decomposition of~$M$
that are isomorphic to the indecomposable module $N$. This multiplicity is well defined by the
Krull--Schmidt Theorem (see \cite[Section 4, Lemma~3]{Alperin}). The proof of the following lemma is easy.

\begin{lem}\label{L:trivialnormal} Let $M$ and $N$ be $FG$-modules, and let $N$ be indecomposable. Suppose
that~$H$ is a normal subgroup of $G$ acting trivially on both the modules $M$ and $N$.
Let $\overline{M}$ and~$\overline{N}$ be the corresponding
$F[G/H]$-modules. Then $[M:N]=[\overline{M}:\overline{N}]$.
\end{lem}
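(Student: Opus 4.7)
The plan is to observe that inflation along the quotient map $G \to G/H$ induces an equivalence between the category of $F[G/H]$-modules and the full subcategory of $FG$-modules on which $H$ acts trivially. The lemma then reduces to a comparison of Krull--Schmidt decompositions on the two sides of this equivalence.

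More concretely, first I would fix a Krull--Schmidt decomposition $M = \bigoplus_{i=1}^r M_i$ into indecomposable $FG$-summands. Since $H$ acts trivially on $M$, it acts trivially on each $M_i$, so each $M_i$ descends to an $F[G/H]$-module $\overline{M_i}$, and $\overline{M} = \bigoplus_{i=1}^r \overline{M_i}$ as $F[G/H]$-modules. Thus $[M:N]$ equals the number of indices $i$ with $M_i \cong N$ (as $FG$-modules), while $[\overline{M}:\overline{N}]$ equals the number of indices $i$ with $\overline{M_i} \cong \overline{N}$ (as $F[G/H]$-modules), provided that I know each $\overline{M_i}$ is indecomposable over $F[G/H]$ and that the Krull--Schmidt theorem applies on that side.

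The indecomposability of each $\overline{M_i}$ is immediate: an $F[G/H]$-direct-sum decomposition $\overline{M_i} = A \oplus B$ lifts via inflation to an $FG$-direct-sum decomposition of $M_i$, contradicting indecomposability unless $A = 0$ or $B = 0$. The same argument shows $\overline{N}$ is indecomposable. What remains is the bijection between isomorphism classes: I need $M_i \cong N$ as $FG$-modules if and only if $\overline{M_i} \cong \overline{N}$ as $F[G/H]$-modules. One direction is clear, since an $FG$-isomorphism is in particular an $F$-linear map that commutes with $G/H$. Conversely, any $F[G/H]$-isomorphism $\overline{M_i} \to \overline{N}$ is automatically $FG$-equivariant because $H$ acts trivially on both sides, so it provides the required $FG$-isomorphism $M_i \to N$.

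There is no real obstacle here; the only thing to be careful about is to record explicitly that each indecomposable summand of an $FG$-module with trivial $H$-action again has trivial $H$-action (so that the decomposition descends), and that the isomorphism class over $FG$ coincides with the isomorphism class over $F[G/H]$ under this trivial-$H$-action condition. Once these two points are in place, the equality $[M:N] = [\overline{M}:\overline{N}]$ is immediate from Krull--Schmidt.
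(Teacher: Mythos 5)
Your proof is correct. The paper itself omits the proof, remarking only that it ``is easy,'' and your argument---that inflation gives an equivalence between $F[G/H]$-modules and $FG$-modules with trivial $H$-action, under which direct sums, indecomposability, and isomorphism classes correspond, so that Krull--Schmidt gives the equality of multiplicities---is precisely the straightforward argument that remark gestures at.
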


\subsection{Brou\'e correspondence}

Let $G$ be a finite group.
An $FG$-module $V$ is said to be a \emph{$p$-permutation} module if for every Sylow $p$-subgroup $P$ of $G$ there exists a linear basis of~$V$ that is permuted by $P$.
A useful characterization of $p$-permutation modules is given by the following theorem
(see \cite[(0.4)]{Broueperm}).

\begin{thm}
An indecomposable $FG$-module $V$ is a $p$-permutation module if and only if there exists a $p$-subgroup $P$ of $G$ such that $V\ |\ \Ind_P^GF$; equivalently, $V$ has trivial \sourced
\end{thm}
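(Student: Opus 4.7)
The plan is to prove each implication separately, using Mackey's formula, transitivity of induction, and the observation that any $FG$-module $V$ is relatively $Q$-projective, that is, $V \mid \Ind_Q^G \Res_Q^G V$, whenever $Q$ is a Sylow $p$-subgroup of $G$, since then $[G:Q]$ is invertible in~$F$. The equivalence with having trivial source is essentially tautological: an indecomposable $V$ with vertex $P$ and source $S$ satisfies $V \mid \Ind_P^G S$ by the definition of vertex and source, so setting $S = F$ is exactly the trivial-source condition.

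For the implication from the $p$-permutation property to being a summand of $\Ind_P^G F$, I fix a Sylow $p$-subgroup $Q$ of $G$. By hypothesis $V$ admits a $Q$-invariant basis, so its restriction decomposes as
\[ \Res_Q^G V \cong \bigoplus_i \Ind_{P_i}^Q F, \]
where each $P_i$ is the $Q$-stabilizer of an orbit representative of the basis, hence a $p$-subgroup of $Q$. Relative $Q$-projectivity combined with transitivity of induction then yields
\[ V \mid \Ind_Q^G \Res_Q^G V \cong \bigoplus_i \Ind_{P_i}^G F, \]
and the Krull--Schmidt Theorem, applicable since $V$ is indecomposable, forces $V \mid \Ind_{P_i}^G F$ for some~$i$.

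For the reverse implication, suppose $V \mid \Ind_P^G F$ with $P$ a $p$-subgroup, and let $Q$ be any Sylow $p$-subgroup of $G$. By the Mackey formula,
\[ \Res_Q^G \Ind_P^G F \cong \bigoplus_{g \in Q \backslash G / P} \Ind_{Q \cap gPg^{-1}}^Q F, \]
which is manifestly a permutation $FQ$-module. Hence $\Res_Q^G V$ is a direct summand of a permutation module for the $p$-group $Q$, and I then invoke the auxiliary lemma that a direct summand of a permutation module over a $p$-group is itself a permutation module. This produces a $Q$-invariant basis of $V$, establishing the $p$-permutation property.

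The main obstacle is this last auxiliary lemma. Although it is standard (going back to Dress and recorded in \cite{Broueperm}), a self-contained proof requires some work. The cleanest route is via the Brauer construction $M \mapsto M(R)$ for subgroups $R \leq Q$: this functor is additive, so it commutes with the passage to direct summands, and on a permutation $FQ$-module with basis $\mathcal{B}$ it computes the subspace spanned by $\mathcal{B}^R$, whose dimension is $|\mathcal{B}^R|$. Comparing these dimensions across all $R \leq Q$, one can reconstruct a $Q$-invariant basis on each summand. Since the Brauer construction is introduced in the very next subsection, this last step fits naturally into the paper's ongoing development and may even be postponed until that machinery is in place.
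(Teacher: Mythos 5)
The paper itself gives no proof of this theorem: it simply cites Brou\'e \cite[(0.4)]{Broueperm}, so there is no ``paper's own proof'' to compare against. Judged on its own merits, your argument has the right architecture and the forward implication is clean and complete: the orbit decomposition of $\Res_Q^G V$, Higman's criterion (relative $Q$-projectivity from invertibility of $[G:Q]$), transitivity of induction, and Krull--Schmidt together do force $V \mid \Ind_{P_i}^G F$ for some $i$. The Mackey decomposition in the reverse direction is likewise correct.

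The one real soft spot is exactly where you flag it: the auxiliary lemma that a direct summand of a permutation $FQ$-module, $Q$ a $p$-group, is again a permutation module. Your proposed route via the Brauer construction is gestured at rather than carried out, and ``comparing dimensions to reconstruct a basis'' does not obviously work without further argument. The standard and genuinely short proof avoids the Brauer construction entirely: for a $p$-group $Q$, each $F[Q/R]$ is \emph{indecomposable}, because the only simple $FQ$-module is $F$ and $\dim_F \Hom_{FQ}(F[Q/R],F) = \dim_F \Hom_{FR}(F,F) = 1$ by Frobenius reciprocity, so the head is simple. A permutation $FQ$-module is thus a direct sum of indecomposable modules $F[Q/R_i]$, and Krull--Schmidt then forces every direct summand to be isomorphic to a subsum, hence a permutation module. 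I would substitute this for the Brauer-quotient sketch. Finally, ``essentially tautological'' overstates the trivial-source equivalence: that $V \mid \Ind_P^G F$ with $P$ a $p$-subgroup implies the \emph{source} is trivial is not immediate from the definitions---one must locate a vertex $P' \le P$, apply Mackey and the auxiliary lemma to see the source is a summand of a permutation $FP'$-module, and then use that $F[P'/R]$ has vertex $R$ to conclude $R = P'$ and the source is $F$. It is the same circle of ideas, but it is a short argument rather than a tautology.
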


It easily follows that the class of $p$-permutation modules is closed under
restriction and induction and under taking direct sums, direct summands and tensor products.

We now recall the definition and the basic properties of Brauer quotients. 
Given an $FG$-module $V$ and $P$ a $p$-subgroup of $G$, the set of fixed points of $P$ on $V$
is denoted by
 \[V^P=\{v\in V\ :\ gv=v\ \text{for all}\ g\in P \}.\] It is easy to see that $V^P$ is an
$FN_G(P)$-module on which $P$ acts trivially. For $Q$ a proper subgroup of $P$,
the relative trace map $\mathrm{Tr}_Q^P:V^Q\rightarrow V^P$
is the linear map defined by $$\mathrm{Tr}_Q^P(v)=\sum_{g\in P/Q}gv,$$
where the sum is over a
complete set of left coset representatives for $Q$ in $P$.
The definition of this map does not depend on the choice of the set of representatives. We observe that
 $$\mathrm{Tr}^P(V)=\sum_{Q<P}\mathrm{Tr}_Q^P(V^Q)$$ is an $FN_G(P)$-module on which $P$ acts
 trivially. We define the \textit{Brauer quotient} of~$V$ with respect to $P$
 to be the $F[N_G(P)/P]$-module
 $$V(P)=V^P/\mathrm{Tr}^P(V).$$
If $V$ is an indecomposable $FG$-module and $P$ is a $p$-subgroup of $G$
such that  $V(P)\neq 0$, then $P$ is contained in a \vertex of
$V$. Brou\'e proved in \cite{Broueperm} that the converse holds for $p$-permutation modules.

\begin{thm}[{\cite[Theorem 3.2]{Broueperm}}]\label{BT2}
Let $V$ be an indecomposable $p$-permutation module and $P$ be a \vertex of $V$.
Let $Q$ be a $p$-subgroup of $G$. Then $V(Q)\neq 0$ if and only if $Q\leq {}^gP$ for some $g\in G$.
\end{thm}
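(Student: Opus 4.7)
The forward direction---that $V(Q)\neq 0$ forces $Q$ to be contained in a conjugate of $P$---holds for arbitrary indecomposable $FG$-modules and has already been noted in the paragraph preceding the theorem, so the entire substance lies in the converse. The plan is to reduce, via the explicit description of the Brauer quotient of a $p$-permutation module, to the single fact $V(P)\neq 0$, and then to deduce this from the Green correspondence.

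First, I would reduce to the case $Q\leq P$: since the Brauer construction is equivariant under conjugation, we have $V(Q)\cong V({}^gQ)$ for every $g\in G$, so replacing $Q$ by a $G$-conjugate lying inside $P$ changes nothing. Because $V$ is a $p$-permutation module, I would pick a Sylow $p$-subgroup $S$ of $G$ containing $P$ together with a basis $B$ of $V$ permuted by $S$; in particular $B$ is permuted by $Q$. I would then establish the standard description $V(Q)\cong F[B^Q]$ of the Brauer quotient: the space $V^Q$ has as basis the set of $Q$-orbit sums on $B$, and for any proper subgroup $R<Q$ and any $x\in V^R$ the coefficient of a $Q$-fixed basis vector $b\in B^Q$ in $\mathrm{Tr}_R^Q(x)$ equals $[Q:R]$ times the coefficient of $b$ in $x$. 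Since $[Q:R]$ is a positive power of $p$, this coefficient vanishes in $F$; conversely every $Q$-orbit sum of size greater than one is visibly of the form $\mathrm{Tr}_{R'}^Q(b)$, for $R'$ the stabiliser of a representative. Consequently the images of $B^Q$ form a basis of $V(Q)$, and since $B^P\subseteq B^Q$ it suffices to prove $V(P)\neq 0$.

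The remaining step is the heart of the argument. As $V^P$ and $\mathrm{Tr}^P(V)$ depend only on $V$ viewed as an $FN_G(P)$-module, there is a canonical identification $V(P)\cong(\Res_{N_G(P)}^GV)(P)$ of $F[N_G(P)/P]$-modules. The Green correspondence now comes into play: the restriction $\Res_{N_G(P)}^GV$ decomposes as $V'\oplus W$, where $V'$ is the unique indecomposable summand with vertex $P$ and every indecomposable summand of $W$ has vertex properly contained in $P$. By the forward direction of the theorem applied to each summand of $W$, we have $W(P)=0$, and so $V(P)\cong V'(P)$. Finally, $V'$ is a $p$-permutation $FN_G(P)$-module with vertex $P$ and trivial source, so $V'$ is a summand of $\Ind_P^{N_G(P)}F$; since $P$ is normal in $N_G(P)$, this last module is the regular representation of $N_G(P)/P$ inflated along $N_G(P)\twoheadrightarrow N_G(P)/P$, and in particular $P$ acts trivially on $V'$. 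Under trivial action, $\mathrm{Tr}_R^P(v)=[P:R]v$ vanishes in $F$ for every proper $R<P$ and every $v\in V'$, so $\mathrm{Tr}^P(V')=0$ and $V'(P)=V'\neq 0$. The main obstacle is setting up the Green correspondence for $p$-permutation modules and securing the decomposition $\Res_{N_G(P)}^GV = V'\oplus W$ with the stated vertex information; once this is in place, the calculations above complete the proof.
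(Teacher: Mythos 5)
The paper cites this result from Brou\'e's paper \cite{Broueperm} without giving a proof, so there is no in-paper argument to compare against; taken on its own merits, your argument is correct and is essentially the standard proof. You reduce the converse direction, via the explicit $p$-permutation-basis description $V(Q) \cong F[\mathcal{B}^Q]$ and the containment $\mathcal{B}^P \subseteq \mathcal{B}^Q$, to the single claim $V(P) \neq 0$; then apply Green correspondence to write $\Res^G_{N_G(P)}V = V' \oplus W$, kill $W(P)$ by the already-known forward direction, and observe that $V'$ (being an indecomposable $p$-permutation module with vertex $P$, hence trivial source) is a summand of $\Ind_P^{N_G(P)}F$ and so carries trivial $P$-action, giving $\mathrm{Tr}^P(V')=0$ and $V'(P) = V' \neq 0$. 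One small imprecision worth flagging: the Green correspondence does not literally say that the indecomposable summands of $W$ have vertices \emph{properly contained in} $P$; rather, their vertices lie, up to $N_G(P)$-conjugacy, in $\mathfrak{X} = \{{}^{x}P \cap N_G(P) : x \in G \setminus N_G(P)\}$. The usable fact is the standard lemma that no $N_G(P)$-conjugate of an element of $\mathfrak{X}$ contains $P$ (else ${}^{hx}P = P$ would force $x \in N_G(P)$), and that is exactly what you invoke, via the forward direction, to conclude $W(P) = 0$. With this reading the argument is complete and correct.
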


Here ${}^gP$ denotes the conjugate $gPg^{-1}$ of $P$.
If $V$ is an $FG$-module with $p$-permutation basis $\mathcal{B}$ with respect
to a Sylow $p$-subgroup $\widetilde{P}$ of $G$ and $P \le \widetilde{P}$, then,
taking for each orbit of $P$ on $\mathcal{B}$ the sum
of the basis elements in that orbit, we obtain a basis for $V^P$.
Each sum over an orbit of size $p$ or more
is a relative trace from a proper subgroup of~$P$. Hence $V(P)$ is isomorphic to the $F$-span of
$$\mathcal{B}^P = \{ v \in \mathcal{B} : \text{$gv = v$ for all $g\in P$} \}.$$
Thus Theorem~\ref{BT2} has the following
corollary.

\begin{cor}\label{cor:Brauer}
Let $V$ be a $p$-permutation $FG$-module
with
$p$-permutation basis
$\mathcal{B}$ with respect to a Sylow $p$-subgroup of $G$ containing a subgroup $P$. 
The Brauer quotient $V(P)$ has~$\mathcal{B}^P$ as a basis. 
Moreover, $V$ has an indecomposable summand with a
\vertex containing~$P$ if and only if $\mathcal{B}^P \not= \emptyset$.
\end{cor}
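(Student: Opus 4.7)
The plan is to formalise the argument sketched in the paragraph immediately preceding the corollary, in two main stages.

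First, I would fix a convenient basis of $V^P$ coming from $P$-orbits on $\mathcal{B}$. Since $P \le \widetilde{P}$ and $\widetilde{P}$ permutes $\mathcal{B}$, so does $P$. Writing $\mathcal{B}$ as a disjoint union of $P$-orbits $\mathcal{O}$ and forming orbit sums $s_\mathcal{O} = \sum_{b \in \mathcal{O}} b$, the collection $\{s_\mathcal{O}\}$ is easily seen to be an $F$-basis of $V^P$, and the singleton orbits correspond precisely to elements of $\mathcal{B}^P$.

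Second, I would show that $\mathrm{Tr}^P(V)$ coincides with the span of the non-singleton orbit sums, so that the classes of the elements of $\mathcal{B}^P$ form a basis of $V(P) = V^P/\mathrm{Tr}^P(V)$. One inclusion is immediate: if $\mathcal{O} = Pb$ with $H := \mathrm{Stab}_P(b) < P$, then $s_\mathcal{O} = \mathrm{Tr}_H^P(b) \in \mathrm{Tr}^P(V)$. For the reverse inclusion, let $Q < P$ and pick a $Q$-orbit sum $s_{\mathcal{O}'}$ with $\mathcal{O}' = Qb$ and stabiliser $Q \cap H$ in $Q$. Using the bijection $P/Q \times Q/(Q \cap H) \to P/(Q \cap H)$ sending $(g,q)$ to $gq(Q \cap H)$, together with the fact that the map $x(Q \cap H) \mapsto xb$ has every fibre of cardinality $[H : Q \cap H]$, I would obtain
\[ \mathrm{Tr}_Q^P(s_{\mathcal{O}'}) \;=\; \sum_{x \in P/(Q \cap H)} xb \;=\; [H : Q \cap H]\, s_{Pb}. \]
If $H < P$ this lies in the span of non-singleton orbit sums. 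If instead $H = P$, i.e.\ $b \in \mathcal{B}^P$, the coefficient becomes $[P : Q]$, which is a positive power of $p$ and therefore vanishes in $F$. Since $V^Q$ is spanned by $Q$-orbit sums, this gives $\mathrm{Tr}_Q^P(V^Q) \subseteq \mathrm{span}\{s_\mathcal{O} : |\mathcal{O}| > 1\}$, completing the proof that $V(P)$ has basis $\mathcal{B}^P$.

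Third, for the ``moreover'' part, I would decompose $V = \bigoplus_i V_i$ into indecomposable summands and use the additivity of the Brauer construction, $V(P) = \bigoplus_i V_i(P)$. By Theorem~\ref{BT2}, $V_i(P) \neq 0$ if and only if $P$ is contained in a conjugate of a vertex of $V_i$, which is equivalent to $V_i$ having a vertex containing~$P$. Together with the first part this yields the equivalence $\mathcal{B}^P \neq \emptyset \iff V(P) \neq 0 \iff$ some $V_i$ has a vertex containing $P$.

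The main obstacle is the bookkeeping in the trace computation of the second step: one must identify the resulting coefficient as $[H : Q \cap H]$ and then observe that for a $P$-fixed basis element this degenerates to $[P:Q]$, which is divisible by $p$. Everything else is a direct application of results already recalled in the preliminaries.
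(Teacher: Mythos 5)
Your proposal is correct and takes essentially the same route as the paper, which derives the corollary from the paragraph preceding it (orbit sums of $P$ on $\mathcal{B}$ give a basis of $V^P$, non-singleton orbit sums are relative traces) together with Theorem~\ref{BT2}. The only addition you make is to carry out the reverse inclusion $\mathrm{Tr}^P(V) \subseteq \operatorname{span}\{s_{\mathcal{O}} : |\mathcal{O}| > 1\}$ explicitly via the coefficient computation $[H : Q\cap H]$, a standard detail the paper leaves implicit.
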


The next result states what is now known as the  Brou\'e correspondence.

\begin{thm}[{\cite[Theorems 3.2 and 3.4]{Broueperm}}]\label{BC1}
An indecomposable $p$-permutation $FG$-module~$V$ has \vertex $P$ if and only if $V(P)$ is a
projective $F[N_G(P)/P]$-module. Furthermore, we have the following statements.
\begin{enumerate}
\item [(i)] The Brauer map sending $V$ to $V(P)$ is a
bijection between the isomorphism classes of indecomposable $p$-permutation
$FG$-modules with \vertex $P$ and the isomorphism classes of indecomposable
projective modules for $F[N_G(P)/P]$. Regarded as an $FN_{G}(P)$-module, $V(P)$ is the
Green correspondent of~$V$.
\item [(ii)] Suppose that $V$ has vertex $P$.
If $M$ is a $p$-permutation $FG$-module then~$V$ is a direct summand of $M$ if and only if $V(P)$ is a direct summand of $M(P)$. Moreover, \[[M:V]=[M(P):V(P)].\]
\end{enumerate}
\end{thm}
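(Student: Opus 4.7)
The plan is to deduce Broué's theorem from the explicit description of Brauer quotients via $p$-permutation bases given in Corollary~\ref{cor:Brauer}, together with the classical Green correspondence.

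First, for the forward direction of the characterization: if $V$ is indecomposable with vertex $P$, then $V \mid \Ind_P^G F$, so $V(P) \mid (\Ind_P^G F)(P)$. Using the natural permutation basis $\{gP : g \in G/P\}$ of $\Ind_P^G F$, the coset $gP$ is fixed by $P$ precisely when $g^{-1}Pg \le P$, equivalently $g \in N_G(P)$. Hence $(\Ind_P^G F)(P)$ is the regular $F[N_G(P)/P]$-module, and its summand $V(P)$ is projective.

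For the converse, suppose $V(P) \ne 0$ is projective. By Theorem~\ref{BT2} any vertex $Q$ of $V$ contains a $G$-conjugate of $P$, so after replacing $P$ with this conjugate we may assume $P \le Q$. If $P < Q$, I would pick $P \triangleleft P' \le Q$ with $|P'/P| = p$ and invoke the iterated Brauer isomorphism
\[
V(P)(P'/P) \;\cong\; V(P')
\]
of $F[N_G(P')/P']$-modules, verified directly on the permutation basis. On the one hand $V(P') \ne 0$ by Theorem~\ref{BT2} applied to $P' \le Q$. On the other hand, the projective $F[N_G(P)/P]$-module $V(P)$ restricts to a free module over the $p$-subgroup $P'/P$, so $V(P)(P'/P) = 0$. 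This contradiction forces $P = Q$.

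For part~(i), I would identify the inflation of $V(P)$ to $FN_G(P)$ with the Green correspondent $f(V)$. The inflation is indecomposable with vertex $P$, since $V(P)$ is a projective indecomposable $F[N_G(P)/P]$-module and $P$ acts trivially. Since $V \mid \Ind_{N_G(P)}^G f(V)$, applying the Brauer functor and a Mackey-style computation of $(\Ind_{N_G(P)}^G f(V))(P)$ on the permutation basis isolates $V(P) \cong f(V)$. Bijectivity of $V \mapsto V(P)$ then follows from the Green correspondence, with inverse sending an indecomposable projective $W$ to the Green correspondent of its inflation to $FN_G(P)$. Part~(ii) is then immediate: decomposing $M = \bigoplus_i V_i$ into indecomposable summands and applying the Brauer functor gives $M(P) = \bigoplus_i V_i(P)$, so by (i) and Krull--Schmidt we obtain $V \mid M$ iff $V(P) \mid M(P)$, with matching multiplicities whenever $V$ has vertex $P$.

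The main obstacle is the pair of compatibility identities for the Brauer construction used above: the iteration $V(P)(P'/P) \cong V(P')$ for nested $p$-subgroups, and the computation of $(\Ind_{N_G(P)}^G f(V))(P)$ needed to pin down the Green correspondent. Both are standard, but require careful bookkeeping of fixed points and relative traces on the permutation basis and a Mackey-type analysis of how Brauer quotients interact with induction.
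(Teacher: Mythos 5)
The paper does not prove Theorem~\ref{BC1}: it is imported verbatim from Brou\'e~\cite{Broueperm} and used throughout as a black box, so there is no internal proof to compare your argument against. I therefore assess it on its own merits.

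Your treatment of the equivalence (``$P$ is a vertex of $V$ if and only if $V(P)$ is nonzero and projective'') is essentially correct. The forward direction, via the observation that $(\Ind_P^G F)(P)$ is the regular $F[N_G(P)/P]$-module, is clean, and the converse via iterating the Brauer construction and noting that a projective $F[N_G(P)/P]$-module restricted to the $p$-subgroup $P'/P$ is free, hence has vanishing Brauer quotient, is the standard argument. (One pedantic point: the identity $V(P)(P'/P) \cong V(P')$ is naturally an isomorphism of modules for $(N_G(P)\cap N_G(P'))/P'$ rather than for all of $N_G(P')/P'$, but since you only need nonvanishing this is harmless.)

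The genuine gaps are in parts (i) and (ii). For (i), the whole content of the claim that the inflation of $V(P)$ to $FN_G(P)$ is the Green correspondent of $V$ is pushed into an unspecified ``Mackey-style computation'' of $(\Ind_{N_G(P)}^G f(V))(P)$; this is precisely where Brou\'e's argument does its real work, and the required Mackey formula for Brauer quotients of induced modules is not so routine that it can be waved away. More seriously, part (ii) is not ``immediate from (i) and Krull--Schmidt.'' Writing $M = \bigoplus_i V_i$ gives $M(P) = \bigoplus_i V_i(P)$, but the $V_i(P)$ need not be indecomposable: if $V_i$ has vertex $Q$ properly containing a $G$-conjugate of $P$ then $V_i(P)\ne 0$ by Theorem~\ref{BT2}, and nothing in the bijection of (i) forbids the projective module $V(P)$ from occurring as a direct summand of $V_i(P)$. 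This is the crux of the multiplicity statement and must be argued. A standard route is to note that, since $P \triangleleft N_G(P)$, every indecomposable summand of $\Res_{N_G(P)} V_i$ with vertex containing $P$ is inflated from $N_G(P)/P$ and coincides with its own Brauer quotient at $P$, so that $V(P)\mid V_i(P)$ would force the Green correspondent $f(V)$ to be a summand of $\Res_{N_G(P)} V_i$; the Burry--Carlson--Puig theorem then forces $V_i$ to have vertex $P$, a contradiction. That step lies beyond classical Green correspondence and is exactly the missing idea in your sketch.
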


The following lemma allows the Brou{\'e} correspondence to be applied to monomial modules
such as signed Young permutation modules.

\begin{lem}\label{L:linearperm} Let $A$ be a subset of $F^\times$. Let $M$ be an $FG$-module with an $F$-basis $\mathcal{B} = \{m_1,\ldots,m_r\}$ such that, if $g\in G$ and $m_i \in \mathcal{B}$ then
 $gm_i=am_j$ for some~\hbox{$a \in A$} and some $m_j \in \mathcal{B}$.
 Then, for any $p$-subgroup $P$ of $G$, there exist coefficients $a_1,\ldots,a_r\in A$ such that
 $\{a_1m_1,\ldots,a_rm_r\}$ is a $p$-permutation basis of $M$ with respect to $P$.
\end{lem}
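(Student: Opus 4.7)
The plan is to describe the $G$-action on $\mathcal{B}$ as a twisted permutation action, extract a cocycle, and then rescale basis elements orbit-by-orbit so that the scalars disappear on $P$. Concretely, for each $g \in G$ and $m_i \in \mathcal{B}$ the hypothesis lets us write $g m_i = a(g,i)\, m_{\sigma_g(i)}$ with $a(g,i) \in A$ and $\sigma_g$ a permutation of $\{1,\ldots,r\}$. Uniqueness of the basis expansion of $(gh) m_i$ forces $g \mapsto \sigma_g$ to be a genuine $G$-action on indices and forces the scalars to satisfy the cocycle identity $a(gh,i) = a(g,\sigma_h(i))\, a(h,i)$.

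Next, fix the $p$-subgroup $P$ and decompose $\{1,\ldots,r\}$ into $P$-orbits under $\sigma$. In each orbit, pick a representative $i_0$ and let $H = \stab_P(i_0)$. The cocycle identity shows that the map $\chi \colon H \to F^\times$ given by $\chi(h) = a(h,i_0)$ is a group homomorphism. The key observation is that $F^\times$ contains no nontrivial element of $p$-power order: if $x^{p^k} = 1$, then $(x-1)^{p^k} = 0$, forcing $x = 1$. Since $H$ is a $p$-group and $\Char F = p$, $\chi$ must therefore be trivial; equivalently, $a(h,i_0) = 1$ for every $h \in H$. This is the main obstacle and the only place where both the characteristic hypothesis and the $p$-subgroup hypothesis enter.

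Given this, set $a_{i_0} = 1$ for each orbit representative, and for $j = \sigma_g(i_0)$ with $g \in P$ define $a_j = a(g,i_0)$. Well-definedness follows because if $g' \in P$ also sends $i_0$ to $j$, then $g^{-1}g' \in H$ and the cocycle identity combined with the triviality of $\chi$ yields $a(g,i_0) = a(g',i_0)$. A further application of the cocycle identity gives $a_j\, a(g,j) = a_{\sigma_g(j)}$ for every $g \in P$ and every $j$, which is precisely the statement $g(a_j m_j) = a_{\sigma_g(j)}\, m_{\sigma_g(j)}$, so $P$ permutes the rescaled basis. All of the $a_j$ so produced are either $1$ or values of $a$, hence lie in the subgroup of $F^\times$ generated by $A$, which coincides with $A$ in the intended applications (notably $A = \{\pm 1\}$ for signed Young permutation modules).
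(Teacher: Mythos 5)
Your proof is correct and takes essentially the same approach as the paper: both arguments work orbit-by-orbit on the $P$-action, exploit that a $p$-power root of unity in characteristic $p$ must equal $1$ (the paper phrases this as the triviality of a one-dimensional module for a cyclic $p$-group, you phrase it as the triviality of the character $\chi$ of the stabilizer), and then transport scalars from a representative. Your cocycle formalism is a cleaner packaging of the same idea. One small correction to the closing sentence: the hedge about ``the subgroup of $F^\times$ generated by $A$'' is unnecessary and slightly undercuts the claim that $a_1,\dots,a_r \in A$. Each $a_j$ you define is either a single value $a(g,i_0)\in A$ or equal to $1$, and $1\in A$ is forced by the hypothesis applied to $g = e$ (since $e\cdot m_i = m_i$ must have the form $a\,m_j$ with $a\in A$, and linear independence gives $a=1$). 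So all coefficients lie in $A$ outright, with no need to pass to a generated subgroup or to restrict to special cases such as $A=\{\pm 1\}$.
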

\begin{proof} Let $\{i_1,\ldots,i_s\}$ be a subset of $\{1,\ldots,r\}$ such that $\mathcal{B}$ is the disjoint union of $\mathcal{B}_1,\ldots,\mathcal{B}_s$ where, for each $j$,
\[\mathcal{B}_j=\{m_k\ :\ \text{$gm_{i_j}=a_gm_k$ for some $g\in P$ and $a_g\in A$}\}.\] Suppose that $gm_{i_j}=am_k$ and $g'm_{i_j}=a'm_k$ for some $g,g'\in P$ and $a,a'\in A$. Then $g^{-1}g'm_{i_j}=a'a^{-1}m_{i_j}$ and, consequently, $Fm_{i_j}$ is a one dimensional $F\langle g^{-1}g'\rangle$-module. Since $P$ is a $p$-subgroup,  $Fm_{i_j}$ is the trivial $F\langle g^{-1}g'\rangle$-module. Hence $a=a'$.
Thus the coefficient $a_g$ is independent of the choice of $g$, and depends  only on $m_{i_j}$ and~$m_k$.

For each $1\leq j\leq s$, let \[\Lambda_j=\{a_km_k\ :\ \text{$gm_{i_j}=a_km_k$ for some $g\in P$ and $a_k\in A$}\}.\]
By the previous paragraph, $\bigcup_{j=1}^s \Lambda_j$ is a basis of $M$.
It is sufficient to prove that each
$\Lambda_j$ is permuted by $P$. Let $x\in P$, and let $a_km_k$, $a_{k'}m_{k'}\in\Lambda_j$.
Suppose that $x(a_{k'}m_{k'})=b(a_km_k)$ for some $b\in F$. We have
$gm_{i_j}=a_km_k$ and $g'm_{i_j}=a_{k'}m_{k'}$ for some $g$, $g'\in P$. Thus $g^{-1}xg'm_{i_j}=bm_{i_j}$.
Repeating the argument in the first paragraph,
we see that $Fm_{i_j}$ is the trivial $F\langle g^{-1}xg'\rangle$-module
and so $b=1$.
\end{proof}

The Brauer quotient of an outer tensor product of $p$-permutation modules is easily described.

\begin{lem}\label{L:boxtimesBroue} Let $G_1$ and $G_2$ be finite groups,
let $M_1,M_2$ be $p$-permutation $FG_1$- and $FG_2$-modules, and let $P_1$ and $P_2$ be $p$-subgroups of $G_1$, $G_2$, respectively. Then \[(M_1\boxtimes M_2)(P_1\times P_2)\cong M_1(P_1)\boxtimes M_2(P_2)\]
as a representation of
$N_{G_1\times G_2}(P_1\times P_2)/(P_1\times P_2) \cong (N_{G_1}(P_1)/P_1)\times (N_{G_2}(P_2)/P_2)$.
\end{lem}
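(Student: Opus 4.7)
The plan is to use Corollary \ref{cor:Brauer}, which identifies the Brauer quotient of a $p$-permutation module with the $F$-span of the basis vectors fixed by the subgroup, together with the elementary fact that a tensor product of permutation bases is a permutation basis for the outer tensor product.

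First I would pick $p$-permutation bases. Since $M_i$ is a $p$-permutation $FG_i$-module, by definition there exists a Sylow $p$-subgroup $\widetilde{P}_i$ of $G_i$ and an $F$-basis $\mathcal{B}_i$ of $M_i$ that is permuted by $\widetilde{P}_i$; by Sylow's theorem we may assume, after conjugating, that $P_i \le \widetilde{P}_i$. The set $\mathcal{B}_1\otimes\mathcal{B}_2 = \{b_1\otimes b_2 : b_i\in\mathcal{B}_i\}$ is then a basis of $M_1\boxtimes M_2$ permuted by $\widetilde{P}_1\times\widetilde{P}_2$, and since the latter is a Sylow $p$-subgroup of $G_1\times G_2$ containing $P_1\times P_2$, it is a $p$-permutation basis in the required sense.

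Next I would apply Corollary \ref{cor:Brauer} three times. On one hand, $(M_1\boxtimes M_2)(P_1\times P_2)$ has basis $(\mathcal{B}_1\otimes\mathcal{B}_2)^{P_1\times P_2}$; on the other, $M_i(P_i)$ has basis $\mathcal{B}_i^{P_i}$, so $M_1(P_1)\boxtimes M_2(P_2)$ has basis $\mathcal{B}_1^{P_1}\otimes\mathcal{B}_2^{P_2}$. The observation $(b_1\otimes b_2)$ is fixed by $(g_1,g_2) \in P_1\times P_2$ if and only if $g_1b_1=b_1$ and $g_2b_2=b_2$ gives the set-theoretic equality
\[(\mathcal{B}_1\otimes\mathcal{B}_2)^{P_1\times P_2} = \mathcal{B}_1^{P_1}\otimes\mathcal{B}_2^{P_2},\]
so the obvious linear map $(M_1\boxtimes M_2)(P_1\times P_2)\to M_1(P_1)\boxtimes M_2(P_2)$ sending $\overline{b_1\otimes b_2}$ to $\overline{b_1}\otimes\overline{b_2}$ is an $F$-linear isomorphism.

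The only remaining point, and the main (though modest) obstacle, is to verify equivariance. I would first record the standard fact that $N_{G_1\times G_2}(P_1\times P_2)=N_{G_1}(P_1)\times N_{G_2}(P_2)$: if $(g_1,g_2)$ normalizes $P_1\times P_2$, then projection onto each coordinate shows that $g_i$ normalizes $P_i$. Consequently the quotient splits as $(N_{G_1}(P_1)/P_1)\times(N_{G_2}(P_2)/P_2)$ as claimed. For $(h_1,h_2)\in N_{G_1}(P_1)\times N_{G_2}(P_2)$ and a fixed basis element $b_1\otimes b_2\in\mathcal{B}_1^{P_1}\otimes\mathcal{B}_2^{P_2}$, the action on the left side sends it to $h_1b_1\otimes h_2b_2$ modulo relative traces, while the action on the right side sends $\overline{b_1}\otimes\overline{b_2}$ to $\overline{h_1b_1}\otimes\overline{h_2b_2}$; since $h_ib_i\in M_i^{P_i}$ its class in $M_i(P_i)$ is represented by the sum over the $P_i$-orbit, but the orbits of size at least $p$ vanish on both sides. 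Matching these descriptions shows the isomorphism is equivariant, completing the proof.
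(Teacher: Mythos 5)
Your proof is correct, and it is more explicit than the paper's, which simply asserts that the statement ``follows from an easy application of Theorem~\ref{BC1}'' with no further detail. You instead reach the conclusion via Corollary~\ref{cor:Brauer}, identifying explicit $p$-permutation bases: this is arguably the more natural route, since the heart of the matter is exactly the set-theoretic equality $(\mathcal{B}_1\otimes\mathcal{B}_2)^{P_1\times P_2}=\mathcal{B}_1^{P_1}\otimes\mathcal{B}_2^{P_2}$ together with the standard identity $N_{G_1\times G_2}(P_1\times P_2)=N_{G_1}(P_1)\times N_{G_2}(P_2)$, both of which you verify. One small stylistic point: the phrase ``its class in $M_i(P_i)$ is represented by the sum over the $P_i$-orbit'' is loose, since $h_ib_i$ need not be a basis element; what you really need (and what your computation implicitly uses) is that for any $v\in M_i^{P_i}$, expanding $v$ in $\mathcal{B}_i$ and discarding the coefficients of non-fixed basis elements computes the class of $v$ in $M_i(P_i)$, and that this truncation commutes with the tensor product. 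Stated that way the equivariance is immediate and the proof is watertight. In short: a correct argument, slightly more elementary in its toolkit (Corollary~\ref{cor:Brauer} rather than Theorem~\ref{BC1}) than the citation the paper gives, and considerably more informative.
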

\begin{proof}
The statement follows from an easy application of Theorem \ref{BC1}.
\end{proof}

\subsection{Partitions and compositions}\label{Sec:PCs}
Let $n\in \N_0$. A \emph{composition} of $n$ is a sequence of non-negative integers $\alpha=(\alpha_1,\ldots,\alpha_r)$ such that $\alpha_r\neq 0$ and $\alpha_1+\cdots+\alpha_r=n$ . In this case, we write $\ell(\alpha)=r$ and $|\alpha|=n$. The unique composition of $0$ is denoted by $\varnothing$; we have $\ell(\varnothing)=0$. The \emph{Young subgroup} $\sym{\alpha}$ is the subgroup $\sym{\alpha_1}\times\cdots\times\sym{\alpha_r}$ of $\sym{n}$, where the $i$th factor $\sym{\alpha_i}$ acts on the set $\{\alpha_1+\cdots+\alpha_{i-1}+1,\ldots,\alpha_1+\cdots+\alpha_{i-1}+\alpha_i\}$.
Let $\alpha=(\alpha_1,\ldots,\alpha_r)$ and $\beta=(\beta_1,\ldots,\beta_s)$ be compositions and let
$q\in \N$. We denote by $q \pdot \alpha$ and $\alpha\bullet\beta$ the compositions of $q|\alpha|$ and $|\alpha|+|\beta|$  defined by
\begin{align*}
q\alpha =& (q\alpha_1,\ldots,q\alpha_r),\\
\alpha\bullet\beta=&(\alpha_1,\ldots,\alpha_r,\beta_1,\ldots,\beta_s),
\end{align*}
respectively. We set $0\alpha=\varnothing$.
We denote by $\alpha+\beta$ the composition of $|\alpha|+|\beta|$ defined by \[\alpha+\beta=(\alpha_1+\beta_1,\ldots,\alpha_r+\beta_r,\alpha_{r+1},\ldots,\alpha_s)\]
where we have assume, without loss of generality, that  $r\leq s$. We define $\alpha-\beta$ similarly,
in the case when $\beta_i \le \alpha_i$ for each $i \le \ell(\beta)$.

A composition $\alpha$ is a \emph{partition} if it is non-increasing.
A partition $\alpha$ is \emph{$p$-restricted} if $\alpha_i-\alpha_{i+1}<p$ for all $i\geq 1$. We denote the set of compositions, partitions and $p$-restricted partitions of $n$ by $\C(n)$, $\P(n)$ and $\RP(n)$, respectively. A partition~$\alpha$ is \emph{$p$-regular} if its conjugate $\alpha'$,
defined by $\alpha'_j = |\{ i : \alpha_i \ge j \}|$, is $p$-restricted.
It is well known that if $\lambda$ is a partition then there exist unique $p$-restricted
partitions $\lambda(i)$ for $i \in \N_0$ such that
\begin{equation}\label{eq:padic}
\lambda = \sum_{i\ge 0} p^i \pdot \lambda(i).
\end{equation}
We call this expression the \emph{$p$-adic expansion} of $\lambda$.

Let $\P^2(n)$, $\C^2(n)$ and $\RP^2(n)$ be the sets consisting of all pairs $(\lambda|\nu)$ of partitions, compositions and $p$-restricted partitions, respectively,  such that $|\lambda|+|\nu|=n$. Here $\lambda$ or $\nu$ may be the empty composition $\varnothing$. For $(\lambda|\nu), (\alpha|\beta)\in\P^2(n)$, we say that $(\lambda|\nu)$ \emph{dominates} $(\alpha|\beta)$, and write $(\lambda|\nu)\unrhd (\alpha|\beta)$, if, for all $k\geq 1$, we have
\begin{enumerate}
\item [(a)] $\sum^k_{i=1}\lambda_i\geq \sum^k_{i=1}\alpha_i$, and

\item [(b)] $|\lambda|+\sum^k_{i=1}\nu_i\geq |\alpha|+\sum^k_{i=1}\beta_i$.
\end{enumerate}
(As a standing convention we declare that $\lambda_i = 0$ whenever $\lambda$ is a partition and
$i > \ell(\lambda)$).
This defines a partial order on the set $\P^2(n)$ called the \emph{dominance order}.
This order becomes the usual dominance order on partitions when restricted to the subsets $\{(\lambda|\varnothing)\in \P^2(n) \}$ 
or $\{(\varnothing|\nu)\in \P^2(n)\}$ 
of $\P^2(n)$.

\subsection{Modules for symmetric groups}\label{Sec:SymModules}
Let $n\in\N_0$, let $\sym{n}$ be the symmetric group on the set $\{1,\ldots,n\}$ and let $\alt{n}$ be its alternating subgroup.
Given a subgroup $H$ of~$\sym{n}$, we denote the trivial representation
of $H$ by $F(H)$, and the restriction of the sign representation of $\sym{n}$ to $H$ by $\sgn(H)$.
In the case when $H=\sym{\gamma}$ for some composition $\gamma$ of $n$
we write $F(\gamma)$ and $\sgn(\gamma)$ for $F(H)$ and $\sgn(H)$, respectively. If $\gamma = (n)$
we reduce the number of parentheses  by writing $F(n)$ and $\sgn(n)$, respectively.

For $\lambda$ a $p$-regular partition of $n$, let $D^\lambda$ be the $F\sym{n}$-module defined
by $$D^\lambda=S^\lambda/\mathrm{rad}(S^\lambda),$$
where $S^\lambda$ is the Specht module labelled by $\lambda$ (see \cite[Chapter~4]{james1978representation}). By \cite[Theorem~11.5]{james1978representation} each $D^\lambda$
is simple, and each  simple $F\sym{n}$-module is isomorphic to a unique~$D^\lambda$.
The simple $F\sym{n}$-modules can also be labelled by $p$-restricted partitions.
For $\lambda\in\RP(n)$ we set $D_\lambda=\mathrm{soc}(S^\lambda)$.
The connection between the two labelings is given by
$$D_\lambda\cong D^{\lambda'}\otimes \sgn(n).$$
For $\lambda \in \RP(n)$, let $\Plambda$ denote
the projective cover of the simple $F\sym{n}$-module~$D_\lambda$.

Finally, for $\gamma\in\P(n)$, let $\chi^\gamma$ denote the ordinary irreducible character of $S^\gamma$, defined over
the rational field.

\subsection{Modules for wreath products}\label{Sec:Modules}
Let $m\in\N$ and let $G$ be a finite group. Recall that the multiplication in the
group $G\wr \sym{m}$ is given by
\[(g_1,\ldots,g_m;\sigma)(g_1',\ldots,g_m';\sigma')=(g_1g_{\sigma^{-1}(1)}',\ldots,g_mg_{\sigma^{-1}(m)}';\sigma\sigma')\,,\]
for $(g_1,\ldots,g_m;\sigma), \, (g_1',\ldots,g_m';\sigma')\in G\wr \sym{m}$.
(Our notation for wreath products is taken from \cite[Section 4.1]{JK}).
Let $M$ be an $FG$-module. The $m$-fold tensor product of $M$
becomes an $F[G\wr\sym{m}]$-module with the action given by
\[(g_1,\ldots,g_m;\sigma)\cdot (v_1\otimes\cdots\otimes v_m)=\sgn(\sigma)g_1v_{\sigma^{-1}(1)}\otimes\cdots\otimes g_mv_{\sigma^{-1}(m)}\] for  $(g_1,\ldots,g_m;\sigma)\in G\wr\sym{m}$ and
$v_1,\ldots,v_m\in M$. We denote this module by~$\widehat{M}^{\otimes m}$.
Note that we have twisted the action of the top group $\sym{m}$ by the sign representation.
Thus, in the notation of \cite[4.3.14]{JK}, we have
\[ \widehat{M}^{\otimes m}
= (\stackrel{m}{\#} M)^{\,\widetilde{}} \otimes \Inf_{\sym{m}}^{G \wr \sym{m}} (\sgn(m)). \]

The $1$-dimensional module $\widehat{\sgn(k)}^{\otimes n}$ will be important to us.
In our applications $k$ will be a $p$-power, and so odd.
Since a
transposition in the top group $\sym{n}$ acts on $\{1,\ldots, kn\}$ as a product
of $k$ disjoint transpositions, and so has odd sign, there is a simpler definition
of this module, as $\Res^{\sym{kn}}_{\sym{k} \wr \sym{n}} \sgn(kn)$. More generally,
given $\alpha \in \C(n)$ and an odd number~$k$, we define
\begin{equation}\label{eq:sgnres} \widehat{\sgn(k)}^{\otimes \alpha} = \Res^{\sym{kn}}_{\sym{k} \wr
\sym{\alpha_1} \times \cdots \times \sym{k} \wr \sym{\alpha_r}} \sgn(kn). \end{equation}


For use in the proof of Proposition~\ref{P:dominates} we briefly recall the character theory
of the group $C_2 \wr \sym{n}$. Let $\chi^\lambda$ denote
the irreducible character of $\sym{n}$ labelled by $\lambda \in \sym{n}$.
For
$(\lambda|\mu) \in \P^2(n)$, with $|\lambda|= m_1$ and $|\mu| = m_2$,
we define $\chi^{(\lambda|\mu)}$ to be the ordinary character of the following module for $C_2\wr\sym{n}$
\[ \Ind_{C_2\wr(\sym{m_1}\times\sym{m_2})}^{C_2\wr\sym{n}}\left
(\Inf^{C_2\wr\sym{m_1}}_{\sym{m_1}} (\chi^\lambda) \boxtimes
(\Inf^{C_2\wr\sym{m_2}}_{\sym{m_2}}(\chi^\mu)\otimes \widehat{\sgn(2)}^{\otimes m_2})\right ).\]
A standard Clifford theory argument (see for instance \cite[Theorem 4.34]{JK})
shows that the characters $\chi^{(\lambda|\mu)}$ for $(\lambda|\mu) \in \P^2(n)$
are precisely the irreducible characters of $C_2 \wr \sym{n}$.

\subsection{Sylow $p$-subgroups of $\sym{n}$}\label{Sec:Syl}

 Let $P_p$ be the cyclic group $\langle (1,2,\ldots,p)\rangle\leq \sym{p}$ of order $p$. Let
$P_1=\{1\}$ and, for $d\geq 1$, set
$$P_{p^{d+1}}=P_{p^d}\wr P_p =\{(\sigma_1,\ldots,\sigma_p;\pi): \sigma_1,\ldots,\sigma_p\in P_{p^d},\, \pi\in P_p\}\,.$$
By \cite[4.1.22, 4.1.24]{JK},  $P_{p^d}$ is a Sylow $p$-subgroup of $\sym{p^d}$.

Let $n \in \N$. Let $n=\sum_{i=0}^r n_ip^i$, where
$0\leq n_i < p$ for $i\in\{0,\ldots, r\}$, and $n_r \not=0$ be the $p$-adic expansion of $n$.
By \cite[4.1.22, 4.1.24]{JK}, the Sylow $p$-subgroups of $\mathfrak{S}_n$ are each
conjugate to the direct product $\prod_{i=0}^r(P_{p^i})^{n_i}$. Hence
if we define $P_n$ to be a Sylow $p$-subgroup of the Young subgroup $\prod_{i=0}^r(\sym{p^i})^{n_i}$
then $P_n$ is a Sylow $p$-subgroup of $\sym{n}$.
The normalizer $N_{\sym{n}}(P_n)$ of $P_n$ in $\sym{n}$ is denoted by $N_n$.

Whenever $\rho=(\rho_1,\ldots,\rho_r)\in\mathscr{C}(n)$, we denote by $P_\rho$ a
Sylow $p$-subgroup of $\sym{\rho}$, defined so that $P_\rho=\prod_{i=1}^rP_{\rho_i}$.
In the special case when
\begin{align*}
\rho& =(1^{m_0},p^{m_1},\ldots,(p^s)^{m_s})=(\underbrace{1,\ldots,1}_{\text{$m_0$ copies}},\,\underbrace{p,\ldots,p}_{\text{$m_1$ copies}},\,\ldots,\,\underbrace{p^s,\ldots,p^s}_{\text{$m_s$ copies}}),
\end{align*}
where $m_i \in \N_0$ for each $i$,
we have $P_\rho=\prod_{i=0}^s(P_{p^i})^{m_i}$; in particular,
the group $P_\rho$ has precisely
$m_i$ orbits of size $p^i$ on the set $\{1,2,\ldots,n\}$ for each $i$. We write $N_\rho = N_{\sym{n}}(P_\rho)$.

\section{The Brauer quotients of signed Young permutation modules}\label{S:QuotientYoungPermutation}

In this section, we determine the Brauer quotients of signed Young permutation modules with respect
to Sylow subgroups of Young subgroups. Our main result is Proposition~\ref{P:Broueperm}; this
generalizes \cite[Proposition 1]{ErdmannYoung}.
The description of the Brauer quotients is combinatorial, using
the $(\alpha|\beta)$-tableaux defined below.


Fix $n\in \N$ and $(\alpha|\beta)\in\C^2(n)$. Suppose that $\alpha=(\alpha_1,\ldots,\alpha_r)$ and $\beta=(\beta_1,\ldots,\beta_s)$. 
The \emph{diagram} $[\alpha]\bbullet[\beta]$ is the set consisting of the \emph{boxes}
$(i,j)\in\N^2$ for $i$ and $j$ such that \emph{either}
$1\leq i\leq r$ and $1 \le j \le \alpha_i$ \emph{or} $r+1 \le i \le r+s$ and $1 \le j \le \beta_{i-r}$.
A box $(i,j)$ is said to be in \emph{row} $i$.
The subset of $[\alpha]\bbullet[\beta]$ consisting of the boxes
belonging to the first $r$ rows (respectively, the last $s$ rows) is denoted by $[\alpha]\bbullet\varnothing$ (respectively,~$\varnothing\bbullet[\beta]$).

\begin{defn}\label{defn:pairTableau}
An $(\alpha|\beta)$-\emph{tableau} $\T$ is
a bijective function $\T:[\alpha] \bbullet [\beta] \rightarrow \{1,\dots,n\}$.
For $(i,j) \in [\alpha]\bbullet [\beta]$, the $(i,j)$-\emph{entry} of $\T$ is $\T(i,j)$.
\end{defn}

We represent an \emph{$(\alpha|\beta)$-tableau} $\T$ by putting the $(i,j)$-entry of $\T$ in
 the box $(i,j)$ of the diagram $[\alpha]\bbullet[\beta]$.
Considering $[\alpha]\bbullet\varnothing$ as the Young diagram $[\alpha]$, we denote the $\alpha$-tableau $\T([\alpha]\bbullet\varnothing)$ by $\T_+$. Similarly, we denote the $\beta$-tableau $\T(\varnothing\bbullet[\beta])$ by $\T_-$. It will sometimes be useful to write $$\T=(\T_+|\T_-).$$
The $(\alpha|\beta)$-tableau $\T$ is \emph{row standard} if the entries in each row of $\T$ are increasing from left to right, i.e. both $\T_+$ and $\T_-$ are row standard in the usual sense.
We denote by $\T^{\alpha|\beta}$ the unique row standard $(\alpha|\beta)$-tableau
such that
for all $i$, $j \in \{1,\ldots,n \}$, if $i$ is in row $a$ of $\T^{\alpha|\beta}$ and $j$ is in row $b$ of $\T^{\alpha|\beta}$
and $i \le j$ then $a \le b$. For example,
\[ \T^{(2,1) | (3)} = \doubleyoungtableauonerow(12,3)(456) \]
where the thicker line separates the two parts of the tableau.

Let $\TT(\alpha|\beta)$ be the set of all $(\alpha | \beta)$-tableaux.
If $\T\in\TT(\alpha|\beta)$ and $g\in \sym{n}$ then we define $g\cdot \T$ to be
the $(\alpha|\beta)$-tableau obtained by applying $g$ to each entry of $\T$, i.e. $(g\cdot \T)(i,j)=g(\T(i,j))$. This defines an action of $\sym{n}$ on the set $\TT(\alpha|\beta)$.
The vector space
$F\TT(\alpha|\beta)$ over $F$ with basis $\TT(\alpha|\beta)$ is therefore
a permutation $F\sym{n}$-module.

For each $\T\in\TT(\alpha|\beta)$, let $R(\T)\leq \sym{n}$ be the {\sl row stabilizer} of $\T$ in $\sym{n}$, consisting of those $g\in\sym{n}$ such that the rows of $\T$ and $g\cdot \T$ coincide as sets. Then
$R(\T)=R(\T_+)\times R(\T_-)$, where $R(\T_+)$ and $R(\T_-)$ are the row stabilizers of $\T_+$ and $\T_-$ respectively, in the usual sense.
Denote by $U(\alpha|\beta)$ the subspace of $F\TT(\alpha|\beta)$ spanned by
$$\{\T-\sgn(g_2) g_1g_2\cdot \T: \T\in \TT(\alpha|\beta),\, (g_1,g_2)\in R(\T_+)\times R(\T_-)\}.$$ In fact $U(\alpha|\beta)$ is an $F\sym{n}$-submodule of $F\TT(\alpha|\beta)$, since for all $h\in \sym{n}$ and for any
$(g_1,g_2)\in R(\T_+)\times R(\T_-)$ and $\T\in\TT(\alpha|\beta)$ we have 
$$h\cdot(\T-\sgn(g_2) g\cdot \T)=h\cdot \T-\sgn({}^h\negthinspace g_2) {}^h\negthinspace g\cdot (h\cdot \T)
\in U(\alpha|\beta),$$
where $g = g_1g_2$,
since  ${}^h\negthinspace g\in {}^h\negthinspace R(\T)=R(h\cdot \T)$ and ${}^h\negthinspace g_2\in R((h\cdot \T)_-)$.

\begin{defn}\label{def:tabloid}
For each $\T\in\TT(\alpha|\beta)$, we write \[\{\T\}=\{(\T_+|\T_-)\}\] for the element $\T+U(\alpha|\beta)\in F\TT(\alpha|\beta)/U(\alpha|\beta)$ and
call it an \textit{$(\alpha|\beta)$-tabloid}.
\end{defn}

Note that $g\{\T\}=\{g\cdot \T\}$ for all $g\in\sym{n}$ and $\T\in\TT(\alpha|\beta)$.
If $\T,\T'\in \TT(\alpha|\beta)$ are such that $\T_-=\T'_-$ and
$\T'_+$ is obtained by swapping two entries in the same row of $\T_+$ then $\{\T\}=\{\T'\}$. On the other hand, if $\T_+=\T'_+$ and $\T'_-$ is obtained by swapping two entries in the same row of $\T_-$ then $\{\T'\}=-\{\T\}$.
The graphical representation of $(\alpha|\beta)$-tableaux is shown
in Example~\ref{eg:tabloids} below.

Let \[\Omega(\alpha|\beta)=\bigl\{\{\T\}:\text{$\T$ is a row standard $(\alpha|\beta)$-tableau}\bigr\}
\subseteq F\TT(\alpha|\beta)/U(\alpha|\beta).\] It is clear that $\Omega(\alpha|\beta)$ is an $F$-basis of $F\TT(\alpha|\beta)/U(\alpha|\beta)$. We write $F\Omega(\alpha|\beta)$ for the $F\sym{n}$-module $F\TT(\alpha|\beta)/U(\alpha|\beta)$.

\begin{lem}\label{L:signpermbasis} Let $(\alpha|\beta)\in\C^2(n)$.
\begin{enumerate}
\item [(i)] The $F\sym{n}$-module $F\Omega(\alpha|\beta)$  is isomorphic to the signed Young permutation $M(\alpha|\beta)$.
\item [(ii)] For any $p$-subgroup $P$ of $\sym{n}$, there exist coefficients $a_{\{\T\}}\in\{\pm 1\}$ for
 each $\{\T\}\in\Omega(\alpha|\beta)$, such that
$$\bigl\{a_{\{\T\}}\{\T\}\ :\ \{\T\}\in \Omega(\alpha|\beta)\}$$ is a $p$-permutation basis for $F\Omega(\alpha|\beta)\cong M(\alpha|\beta)$ with respect to $P$.
\end{enumerate}
\end{lem}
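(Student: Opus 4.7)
For \emph{(i)}, the natural approach is to exhibit the tabloid $\{\T^{\alpha|\beta}\}$ as an explicit generator and appeal to Frobenius reciprocity. First I would compute the stabilizer of the line $F\{\T^{\alpha|\beta}\}$ in $\sym{n}$. Writing $(g_1,g_2)\in \sym{\alpha}\times\sym{\beta}=R(\T^{\alpha|\beta}_+)\times R(\T^{\alpha|\beta}_-)$, the defining relations of $U(\alpha|\beta)$ give
\[ g_1g_2\cdot\{\T^{\alpha|\beta}\}=\{g_1g_2\cdot\T^{\alpha|\beta}\}=\sgn(g_2)\{\T^{\alpha|\beta}\}, \]
so $\sym{\alpha}\times\sym{\beta}$ acts on $F\{\T^{\alpha|\beta}\}$ via the character $F(\sym{\alpha})\boxtimes\sgn(\sym{\beta})$. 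Frobenius reciprocity then produces an $F\sym{n}$-homomorphism
\[ \phi\colon M(\alpha|\beta)\longrightarrow F\Omega(\alpha|\beta) \]
sending the canonical generator to $\{\T^{\alpha|\beta}\}$.

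To finish (i), I would observe that $\sym{n}$ acts transitively on the set of $(\alpha|\beta)$-tabloids up to sign: any row standard $\{\T\}$ equals $g\cdot\{\T^{\alpha|\beta}\}$ for some $g\in\sym{n}$, so $\phi$ is surjective. A dimension count using
\[ \dim M(\alpha|\beta)=\frac{n!}{\alpha_1!\cdots\alpha_r!\,\beta_1!\cdots\beta_s!}=|\Omega(\alpha|\beta)| \]
then shows $\phi$ is an isomorphism.

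For \emph{(ii)}, the plan is to invoke Lemma~\ref{L:linearperm} with $A=\{\pm 1\}\subseteq F^\times$ applied to the basis $\Omega(\alpha|\beta)$. The only hypothesis to check is that $\sym{n}$ permutes $\Omega(\alpha|\beta)$ up to signs $\pm 1$. Given $g\in\sym{n}$ and a row standard $\T$, the tableau $g\cdot\T$ need not be row standard, but one can choose row permutations $(r_1,r_2)\in R((g\cdot\T)_+)\times R((g\cdot\T)_-)$ so that $r_1r_2(g\cdot\T)=\T'$ is row standard. The defining relations of $U(\alpha|\beta)$ then give
\[ g\cdot\{\T\}=\{g\cdot\T\}=\sgn(r_2)\,\{\T'\}, \]
which is the required form. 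Lemma~\ref{L:linearperm} then produces the claimed $p$-permutation basis.

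The only genuine subtlety, and what I would be most careful about, is the sign bookkeeping in the relations defining $U(\alpha|\beta)$, namely that only the row permutations in the $\beta$-part contribute a sign. Once this is verified, both parts of the lemma reduce to a routine combination of Frobenius reciprocity, a dimension count, and Lemma~\ref{L:linearperm}.
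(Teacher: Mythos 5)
Your proof is correct and follows essentially the same route as the paper: part (i) identifies $F\{\T^{\alpha|\beta}\}$ with $F(\alpha)\boxtimes\sgn(\beta)$ as an $F[\sym{\alpha}\times\sym{\beta}]$-module and concludes by a dimension count (the paper cites the characterisation of induced modules in Alperin, which is the same thing packaged slightly differently from your Frobenius-reciprocity-plus-surjectivity argument), and part (ii) is exactly the paper's application of Lemma~\ref{L:linearperm} with $A=\{\pm1\}$. The sign bookkeeping you flag as the subtle point is verified correctly.
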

\begin{proof}
By the remarks after Definition~\ref{def:tabloid}
there is an isomorphism $F(\alpha)\boxtimes \sgn(\beta)\cong F\{\T^{\alpha|\beta}\}$
of $F[\sym{\alpha}\times\sym{\beta}]$-modules.
Since $|\Omega(\alpha|\beta)|=\dim_FM(\alpha|\beta)$, part~(i)
follows from the characterization of induced modules in
\cite[Section 8, Corollary 3]{Alperin}.
Part~(ii) follows from Lemma \ref{L:linearperm}, since,
for all $\{\T\}\in\Omega(\alpha|\beta)$ and $\sigma\in\sym{n}$, we have
$\sigma\{\T\}=\pm\{\T'\}$ for some $\{\T'\}\in\Omega(\alpha|\beta)$.
\end{proof}

In view of Lemma \ref{L:signpermbasis}(i), we shall identify $M(\alpha|\beta)$ with
$F\Omega(\alpha|\beta)$, so that $M(\alpha|\beta)$ has the set
of $(\alpha|\beta)$-tabloids as a basis.

The next corollary follows from Lemma \ref{L:signpermbasis} and Corollary \ref{cor:Brauer}.

\begin{cor}\label{C:basisBroueM} Let $(\alpha|\beta)\in\C^2(n)$.
\begin{enumerate}
\item[(i)]  Let $P$ be a $p$-subgroup of $\sym{n}$.
The  $F[N_{\sym{n}}(P)/P]$-module $M(\alpha|\beta)(P)$ has a linear basis consisting of all the $(\alpha|\beta)$-tabloids $\{\T\}$ that are fixed by $P$.
\item[(ii)] Let $\rho=(1^{n_0},p^{n_1},\ldots,(p^r)^{n_r})$ be a partition of $n$.
The group \[N_\rho/P_\rho \cong \sym{n_0}\times ((N_p/P_p)\wr\sym{n_1})\times\cdots\times ((N_{p^r}/P_{p^r})\wr\sym{n_r})\] acts on the set of $P_\rho$-fixed $(\alpha|\beta)$-tabloids by transitively permuting  the
entries in $P_\rho$-orbits of size $p^i$ according to $\sym{n_i}$ and, within each $P_\rho$-orbit of size $p^i$, permuting its entries according to $N_{p^i}/P_{p^i}$, for all $i\in\{0,1,\ldots, r\}$.
\end{enumerate}
\end{cor}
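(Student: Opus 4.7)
The plan is to derive both parts from Lemma \ref{L:signpermbasis}(ii) and Corollary \ref{cor:Brauer}. For part (i), Lemma \ref{L:signpermbasis}(ii) gives a $p$-permutation basis $\mathcal{B} = \{a_{\{\T\}}\{\T\} : \{\T\} \in \Omega(\alpha|\beta)\}$ of $M(\alpha|\beta)$, and Corollary \ref{cor:Brauer} identifies $M(\alpha|\beta)(P)$ with the $F$-span of $\mathcal{B}^P$. Since each coefficient $a_{\{\T\}} \in \{\pm 1\}$ is a central scalar, the signed element $a_{\{\T\}}\{\T\}$ is $P$-fixed if and only if the tabloid $\{\T\}$ itself is $P$-fixed, so $M(\alpha|\beta)(P)$ has as a basis the set of $P$-fixed $(\alpha|\beta)$-tabloids.

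For part (ii) I first characterize the $P_\rho$-fixed tabloids. From the definition of the $\sym{n}$-action on $\TT(\alpha|\beta)$, one checks that $g\{\T\} = \pm\{\T\}$ precisely when $g$ preserves the row-partition of $\{1,\ldots,n\}$ determined by $\T$; writing $g = g_+g_-$ with $g_+ \in R(\T_+)$ and $g_- \in R(\T_-)$, one has $g\{\T\} = \sgn(g_-)\{\T\}$. Hence a necessary condition for $\{\T\}$ to be $P_\rho$-fixed is that every $P_\rho$-orbit on $\{1,\ldots,n\}$ be contained in a single row of $\T$. Conversely, when this holds, the restriction $g_-$ of any $g \in P_\rho$ to the $\beta$-rows is a product of disjoint cycles of $p$-power length; since $p$ is odd each such cycle is an even permutation, so $\sgn(g_-) = 1$ and $\{\T\}$ is genuinely fixed.

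Next I would unpack the resulting $N_\rho/P_\rho$-action on these tabloids using the Sylow decomposition
\[ N_\rho/P_\rho \;\cong\; \sym{n_0}\times\prod_{i=1}^r\bigl((N_{p^i}/P_{p^i})\wr\sym{n_i}\bigr) \]
recalled in Subsection \ref{Sec:Syl}. Under this identification, conjugation in $\sym{n}$ shows that the $\sym{n_i}$ factor acts by permuting the $n_i$ $P_\rho$-orbits of size $p^i$ as blocks (this action is evidently transitive), while each copy of $N_{p^i}/P_{p^i}$ acts within a single such orbit; both kinds of motion preserve the condition that each $P_\rho$-orbit lies in a single row, so the set of $P_\rho$-fixed tabloids is indeed stable. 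Transporting this through the formula $g\{\T\} = \{g\cdot\T\}$ and part (i) yields the description in (ii).

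The main obstacle is the sign computation in part (ii): one must check that $P_\rho$-fixed tabloids are fixed on the nose, not merely up to sign. This is precisely where the hypothesis that $p$ is odd is essential; once it is handled, the remaining assertions reduce to unpacking the definitions of the action and of the Sylow normalizer.
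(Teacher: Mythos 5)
Your proof is correct and takes the same approach the paper intends: the paper's entire justification for this corollary is the one sentence ``follows from Lemma~\ref{L:signpermbasis} and Corollary~\ref{cor:Brauer},'' and your argument supplies exactly the details that are being suppressed---the observation that scaling a $p$-permutation basis element by a unit of $F$ does not affect whether it is $P$-fixed, the characterization of $P_\rho$-fixed tabloids as those whose rows are unions of $P_\rho$-orbits, and the sign computation (a permutation of odd-$p$-power order is even) showing such tabloids are fixed on the nose rather than merely up to sign. The last point is exactly the argument the authors themselves spell out in the proof of Proposition~\ref{P:vertices}, so your handling of the only subtle step is the same as theirs.
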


More explicitly, the basis in Corollary \ref{C:basisBroueM}(i)
consists of all $(\alpha|\beta)$-tabloids $\{\T\}$ such that $\T$ is
row standard and each row of~$\T$ is a union of orbits of $P$ on $\{1,\ldots,n\}$.
This can be seen in the following example.

\begin{eg}\label{eg:tabloids} Let $p=3$. Consider the $3$-subgroups $Q_1=\langle (1,2,3),(4,5,6), (7,8,9)\rangle$ and $Q_2=\langle (4,5,6), (7,8,9)\rangle$ and  of $\sym{9}$. By Corollary~\ref{C:basisBroueM}(i), since there are no $\bigl((2,1)|(6)\bigr)$-tabloids fixed by $Q_1$, we have $M\bigl((2,1)|(6)\bigr)(Q_1)=0$. On the other hand,
$M\bigl((2,1)|(6)\bigr)(Q_2)$ has a basis consisting of the $\bigl((2,1)|(6)\bigr)$-tabloids
\setlength{\arrayrulewidth}{.1em}
\[
\left\{ \, \doubleyoungtabloidonerow(12,3)(456789)\, \right\},\quad
\left\{ \, \doubleyoungtabloidonerow(13,2)(456789) \, \right\},\quad
\left\{ \, \doubleyoungtabloidonerow(23,1)(456789) \, \right\}
\]
where the bold line separates each $\T_+$ from $\T_-$.
Taking $\rho = (1,1,1,3,3)$ we have $P_\rho=Q_2$ and
\[ N_{\sym{9}}(Q_2) = \sym{3} \times N_{\sym{3}}(P_3) \wr \sym{2} = \sym{3} \times (\sym{3} \wr \sym{2}).\]
 The first factor $\sym{3}$ permutes the entries $1,2,3$ of each tabloid without sign,
 and the second factor $\sym{3}
 \wr \sym{2}$ permutes the entries $4,5,6,7,8,9$  with
 sign. The subgroup $Q_2$ acts trivially on the tabloids.
 Thus if $\{ \mathrm{U} \}$ and $\{ \mathrm{V} \}$ are the first two $((2,1)|(6))$-tabloids
 above then
 \[ \Res^{N_{\sym{9}}(Q_2)}_{\sym{3} \wr \sym{2}} (F \{ \mathrm{U}
 \}) \cong \widehat{\sgn(3)}^{\otimes 2} \]
 and
  $(23)(45) \{ \mathrm{V} \}= - \{ \mathrm{U} \}$.
Note that the isomorphism above requires the sign twist in the definition of $\widehat{M}$
for $M$ an $F\sym{m}$-module
that we commented on in Section~\ref{Sec:Modules}.
\end{eg}


Given $m \in \N$, we define a $1$-dimensional $F[N_k \wr \sym{m}]$-module by
\[ \widehat{\sgn(N_k)}^{\otimes m} = \Res^{\sym{k} \wr \sym{m}}_{N_k \wr \sym{m}} \widehat{\sgn(k)}^{\otimes m}. \]
Using this we may now define three key families of modules.


\begin{defn}\label{D:VW} Let $k \in \N$, let $m \in \N_0$ and let $(\gamma|\delta)\in\C^2(m)$.
Let $|\gamma|=m_1$ and $|\delta|=m_2$.
\begin{enumerate}
\item [(i)] We define  $V_k(\gamma|\delta)$ to be the $F[\sym{k} \wr \sym{m}]$-module
\[V_k(\gamma|\delta)=\Ind_{\sym{k}\wr(\sym{m_1}\times\sym{m_2})}^{\sym{k}\wr\sym{m}}
\left (\Inf^{\sym{k}\wr\sym{m_1}}_{\sym{m_1}}(M^\gamma)\boxtimes \bigl(\Inf^{\sym{k}\wr\sym{m_2}}_{\sym{m_2}}
(M^\delta)\otimes \widehat{\sgn(k)}^{\otimes m_2}\bigr)\right ).\]
\item [(ii)] We define
$W_k(\gamma|\delta)$ to be the $F[(N_k/P_k)\wr\sym{m}]$-module
obtained from
\[ \begin{split}
\Res^{\sym{k}\wr\sym{m}}_{N_k\wr\sym{m}}{}&{} V_k(\gamma|\delta) \\
 {}&\cong{} \Ind_{N_k\wr(\sym{m_1}\times \sym{m_2})}^{N_k\wr\sym{m}}\left( \Inf^{N_k\wr\sym{m_1}}_{\sym{m_1}}(M^\gamma)\boxtimes \bigl( \Inf^{N_k\wr\sym{m_2}}_{\sym{m_2}}(M^\delta)\otimes \widehat{\sgn(N_k)}^{\otimes m_2}\big )
 \right)
 \end{split}
 \]
via the canonical surjection $N_k \wr \sym{m} \rightarrow (N_k\wr\sym{m})/(P_k)^m\cong (N_k/P_k)\wr\sym{m}$.

\item [(iii)] For $k\geq 2$,
we define $\overline{W}_k(\gamma|\delta)$ to be the $F[C_2\wr\sym{m}]$-module obtained
from $W_k(\gamma|\delta)$  via the canonical surjection
$(N_k/P_k) \wr \sym{m} \rightarrow ((N_k/P_k)\wr \sym{m})/(N_{\alt{k}}(P_k)/P_k)^m\cong C_2\wr \sym{m}$. We define
\[\overline{W}_{\!1}(\gamma|\delta)=\Inf^{C_2\wr\sym{m}}_{\sym{m}}W_1(\gamma|\delta).\]
\end{enumerate}
\end{defn}

We note that $W_k(\gamma | \delta)$ may equivalently be defined to be
the  $F[(N_k/P_k)\wr\sym{m}]$-module obtained from
\begin{equation}
\label{eq:W} 
\Ind_{N_k\wr(\sym{\gamma}\times\sym{\delta})}^{N_k\wr\sym{m}}\bigl(F(N_k\wr \sym{\gamma})\boxtimes\widehat{\sgn(N_k)}^{\otimes\delta}\bigr)
\end{equation}
via the canonical surjection as in Definition \ref{D:VW}(ii). 
We have
 $W_1(\gamma|\delta)=V_1(\gamma|\delta) \cong M(\gamma|\delta)$ as $F\sym{m}$-modules.
When $k\geq 2$, the $F[C_2\wr\sym{m}]$-module $\overline{W}_k(\alpha|\beta)$ is
isomorphic to $V_k(\alpha|\beta)$, considered as a $F[C_2 \wr \sym{m}]$-module via the canonical surjection
$\sym{k} \wr \sym{m} \rightarrow
(\sym{k}\wr\sym{m})/\alt{k}^{m}\cong (\sym{k}/\alt{k})\wr\sym{m}\cong C_2\wr\sym{m}$.
Similarly, $\overline{W}_k(\alpha|\beta)$ is
isomorphic to
$\Res^{\sym{k}\wr\sym{m}}_{N_k\wr\sym{m}}V_k(\alpha|\beta)$,
considered as a $F[C_2 \wr \sym{m}]$-module via the canonical surjection
$N_k \wr \sym{m} \rightarrow
(N_k\wr \sym{m})/(N_{\alt{k}}(P_k))^m\cong C_2\wr \sym{m}$.

\begin{lem}\label{L:barWk} For all $k\geq 2$ and all $(\gamma |\delta)\in \C^2(m)$, we have
\[\overline{W}_k(\gamma|\delta)\cong V_2(\gamma|\delta),\]
as $F[C_2\wr \sym{m}]$-modules.
\end{lem}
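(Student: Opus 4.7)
The plan is to exploit the remark immediately preceding Lemma~\ref{L:barWk}, which
identifies $\overline{W}_k(\gamma|\delta)$ with $V_k(\gamma|\delta)$ viewed
as an $F[C_2\wr\sym{m}]$-module via the canonical quotient
$\sym{k}\wr\sym{m} \twoheadrightarrow (\sym{k}\wr\sym{m})/\alt{k}^{m} \cong C_2\wr\sym{m}$.
Under this identification the problem reduces to showing that the $\sym{k}\wr\sym{m}$-action
on $V_k(\gamma|\delta)$ factors through this quotient, and that the resulting
$F[C_2\wr\sym{m}]$-module is precisely $V_2(\gamma|\delta)$.

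The main tool will be the standard commutation isomorphism: whenever $N \trianglelefteq G$ is contained in a
subgroup $H \leq G$, and $X$ is an $F[H]$-module on which $N$ acts trivially, then
$\Ind_H^G X$, viewed as an $F[G/N]$-module via the inflation along $G \twoheadrightarrow G/N$,
is naturally isomorphic to $\Ind_{H/N}^{G/N} \bar X$, where $\bar X$ denotes the corresponding
$F[H/N]$-module. I would apply this with $G = \sym{k}\wr\sym{m}$,
$H = \sym{k}\wr(\sym{m_1}\times\sym{m_2})$ and $N = \alt{k}^{m}$, so that
$G/N \cong C_2\wr\sym{m}$ and $H/N \cong C_2\wr(\sym{m_1}\times\sym{m_2})$. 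Before invoking it I must verify that the
$F[H]$-module $X$ inside the definition of $V_k(\gamma|\delta)$ has $N$ in its kernel, and identify $\bar X$.
For the inflated tensor factors $\Inf^{\sym{k}\wr\sym{m_i}}_{\sym{m_i}}(M^{\gamma_i})$ and
$\Inf^{\sym{k}\wr\sym{m_2}}_{\sym{m_2}}(M^{\delta})$ this is clear, since they already kill the whole base group
$\sym{k}^{m_i}$ and hence \emph{a fortiori} $\alt{k}^{m_i}$; they descend to
$\Inf^{C_2\wr\sym{m_i}}_{\sym{m_i}}(M^{\gamma_i})$ and $\Inf^{C_2\wr\sym{m_2}}_{\sym{m_2}}(M^{\delta})$ respectively.

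For the remaining tensor factor $\widehat{\sgn(k)}^{\otimes m_2}$, a direct computation from the
wreath product formula in Section~\ref{Sec:Modules} shows that a base element
$(g_1,\dots,g_{m_2};1)$ acts on this one-dimensional module by the scalar $\sgn(g_1)\cdots\sgn(g_{m_2})$,
which depends only on the images of the $g_i$ in $\sym{k}/\alt{k} \cong C_2$; the top element
$(1,\dots,1;\sigma)$ acts by $\sgn(\sigma)$. Comparing with the analogous formula for
$\widehat{\sgn(2)}^{\otimes m_2}$ shows that the two modules are identified under the descent.
Combining these observations, $\bar X$ is exactly the $F[H/N]$-module used to define
$V_2(\gamma|\delta)$, so the commutation isomorphism immediately yields
$\overline{W}_k(\gamma|\delta) \cong V_2(\gamma|\delta)$. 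The main obstacle is the
bookkeeping of the three surjections involved (the one defining $\overline{W}_k$,
the inflations inside $V_k$, and the descent of the sign module) and the verification that they
are mutually compatible; once the quotient diagrams line up, the conclusion is essentially formal.
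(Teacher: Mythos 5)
Your proof is correct and takes essentially the same route as the paper: both start from the remark preceding the lemma identifying $\overline{W}_k(\gamma|\delta)$ with $V_k(\gamma|\delta)$ viewed through the canonical surjection $\sym{k}\wr\sym{m} \twoheadrightarrow C_2\wr\sym{m}$, and both reduce the claim to the observation that $\widehat{\sgn(k)}^{\otimes m_2}$ and $\widehat{\sgn(2)}^{\otimes m_2}$ agree as $F[C_2\wr\sym{m_2}]$-modules under that quotient. The paper's proof is terser, simply asserting that it suffices to match the sign factors, while you make explicit the induction/inflation commutation isomorphism and the kernel checks that justify this reduction.
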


\begin{proof} It suffices to show that $\widehat{\sgn(k)}^{\otimes m_2}\cong \widehat{\sgn(2)}^{\otimes m_2}$ as $F[C_2\wr\sym{m_2}]$-modules, where $\sgn(k)$ is regarded as 
an $FC_2$-module via the canonical surjection
$\sym{k} \rightarrow \sym{k}/\alt{k}\cong C_2$. This is clear since  $\sgn(k)\cong \sgn(2)$ as
$FC_2$-modules in this regard.
\end{proof}

The following notation will be used to describe the direct summands of the Brauer quotients
of the signed Young permutation modules $M(\alpha|\beta)$.

\begin{notation}\label{N:Omega} Let $(\alpha|\beta)\in\C^2(n)$ and let $\rho=(1^{n_0},p^{n_1},(p^2)^{n_2},\ldots, (p^r)^{n_r}) \in \C(n)$.
We write $\LambdaABR$ for the set consisting of all pairs
 of tuples
 of compositions $(\mbf{\gamma}|\mbf{\delta})=(\mbf{\gamma}_0,\mbf{\gamma}_1,\ldots,\mbf{\gamma}_r|\mbf{\delta}_0,\mbf{\delta}_1,\ldots,\mbf{\delta}_r)$ such that:

\smallskip

\begin{enumerate}
\item $\alpha=\sum_{i=0}^rp^i\pdot\mbf{\gamma}_i$, $\beta=\sum_{i=0}^rp^i\pdot \mbf{\delta}_i$, and

\smallskip
\item $|\mbf{\gamma}_i|+|\mbf{\delta}_i|=n_i$ for each $i\in\{0,\ldots,r\}$.
\end{enumerate}
\end{notation}

Let $(\alpha|\beta)\in\C^2(n)$. Recall that $\Omega(\alpha|\beta)$ is the basis of $M(\alpha|\beta)$ consisting of all $(\alpha|\beta)$-tabloids.
As remarked after Corollary~\ref{C:basisBroueM}, an $F$-basis of $M(\alpha|\beta)(P_\rho)$ is obtained by
taking those $(\alpha|\beta)$-tabloids $\{(\T_+|\T_-)\}\in\Omega(\alpha|\beta)$
such that the rows of $\T_+$ and $\T_-$ are unions of the orbits of~$P_\rho$. 
Given such a basis element $\{(\T_+|\T_-)\}$ and $i\in\{0,\ldots,r\}$, let $(\mbf{\gamma}_i)_j$
and~$(\mbf{\delta}_i)_k$ be the numbers of $P_\rho$-orbits of
length $p^i$ in rows $j$ and $k$ of $\T_+$ and $\T_-$, respectively.
For each $i\in\{0,\ldots,r\}$, let
\begin{align*}
\mbf{\gamma}_i&=\bigl( (\mbf{\gamma}_i)_1,(\mbf{\gamma}_i)_2,\ldots \bigr),\\
\mbf{\delta}_i&=\bigl( (\mbf{\delta}_i)_1,(\mbf{\delta}_i)_2,\ldots \bigr).
\end{align*}
Note that $|\mbf{\gamma}_i|+|\mbf{\delta}_i|=n_i$ for each $i$, and so
$(\mbf{\gamma}_0,\ldots,\mbf{\gamma}_r|\mbf{\delta}_0,\ldots,\mbf{\delta}_r)\in\LambdaABR$.
We say that the $(\alpha|\beta)$-tabloid $\{(\T_+|\T_-)\}$ is of {\sl $\rho$-type $(\mbf{\gamma}|\mbf{\delta})$}. We denote the set of all $(\alpha|\beta)$-tabloids of $\rho$-type $(\mbf{\gamma}|\mbf{\delta})$ by
$\Omega\bigl((\alpha|\beta),\rho\bigr)_{(\mbf{\gamma}|\mbf{\delta})}$. Then the disjoint union
\begin{equation}\label{eqn union}
\Omega\bigl((\alpha|\beta),\rho\bigr)=\bigcup_{(\mbf{\gamma}|\mbf{\delta})\in \Lambda((\alpha|\beta),\rho)}\Omega\bigl((\alpha|\beta),\rho)_{(\mbf{\gamma}|\mbf{\delta})}
\end{equation}
is an $F$-basis of $M(\alpha|\beta)(P_\rho)$. Thus, as $F$-vector spaces, we have
\begin{equation}\label{eqn M dec}
M(\alpha|\beta)(P_\rho)=F\Omega\bigl( (\alpha|\beta),\rho\bigr)=\bigoplus_{(\mbf{\gamma}|\mbf{\delta})\in \Lambda((\alpha|\beta),\rho)}F\Omega\bigl( (\alpha|\beta),\rho\bigr)_{(\mbf{\gamma}|\mbf{\delta})}\,.
\end{equation}
It is clear that (\ref{eqn M dec}) is in fact a decomposition of $FN_\rho$-modules, since $N_\rho$ permutes orbits of $P_\rho$ of the same size as blocks for its action, and therefore preserves the $\rho$-type in its action on $(\alpha|\beta)$-tabloids. Furthermore, $P_\rho$ fixes all $(\alpha|\beta)$-tabloids having a specified
$\rho$-type. Therefore we obtain the following lemma.

\begin{lem}\label{L:rhotype}
Let $(\alpha|\beta)\in\C^2(n)$ and $\rho=(1^{n_0},p^{n_1},\ldots,(p^r)^{n_r})$ be
a partition of $n$. The Brauer quotient of $M(\alpha|\beta)$ with respect to the subgroup $P_\rho$ has the following direct sum decomposition into $F[N_\rho/P_\rho]$-modules:
\[M(\alpha|\beta)(P_\rho)=\bigoplus_{(\mbf{\gamma}|\mbf{\delta})\in \Lambda((\alpha|\beta),\rho)}F\Omega((\alpha|\beta),\rho)_{(\mbf{\gamma}|\mbf{\delta})}.\]
\end{lem}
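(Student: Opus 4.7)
The plan is short because essentially all the ingredients are already assembled in the paragraph preceding the statement; the lemma is really a tidy packaging of the preceding discussion, so I only need to verify three points: (a)~the set $\Omega((\alpha|\beta),\rho)$ is an honest $F$-basis of $M(\alpha|\beta)(P_\rho)$ (with no sign corrections); (b)~the subspace $F\Omega((\alpha|\beta),\rho)_{(\mbf{\gamma}|\mbf{\delta})}$ attached to each $\rho$-type is $N_\rho$-stable; (c)~$P_\rho$ acts trivially on each of these subspaces, so each is an honest $F[N_\rho/P_\rho]$-module.

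For (a) and (c) simultaneously, I would invoke Lemma~\ref{L:signpermbasis}(ii) to obtain a $p$-permutation basis of $M(\alpha|\beta)$ with respect to $P_\rho$ whose elements are the tabloids $\{\T\}$ rescaled by signs $\pm 1$, and then apply Corollary~\ref{cor:Brauer} to identify $M(\alpha|\beta)(P_\rho)$ with the $F$-span of those (rescaled) tabloids fixed by $P_\rho$. A tabloid $\{(\T_+|\T_-)\}$ is so fixed precisely when every row of $\T_+$ and of $\T_-$ is a union of $P_\rho$-orbits, which is the defining condition of $\Omega((\alpha|\beta),\rho)$. The sign rescalings turn out to be unnecessary here: every generator of $P_\rho$ is a $p^i$-cycle, and since $p$ is odd such a cycle has even sign, so it acts as $+1$ even on the $\beta$-part of the tabloid. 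This gives~(a), and the same observation also yields~(c).

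For (b), I would use the explicit description $N_\rho/P_\rho\cong\sym{n_0}\times\prod_{i\ge 1}(N_{p^i}/P_{p^i})\wr\sym{n_i}$ from Corollary~\ref{C:basisBroueM}(ii). Any element of $N_\rho$ moves whole $P_\rho$-orbits (via the $\sym{n_i}$ factors) and possibly rearranges elements within a single orbit (via the $N_{p^i}/P_{p^i}$ factors), but never splits an orbit across rows nor merges distinct orbits. Consequently the multiset recording the number of $P_\rho$-orbits of size $p^i$ in each row of $\T_+$ and each row of $\T_-$ is an $N_\rho$-invariant, and this is exactly the $\rho$-type $(\mbf{\gamma}|\mbf{\delta})$. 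Hence each $F\Omega((\alpha|\beta),\rho)_{(\mbf{\gamma}|\mbf{\delta})}$ is $N_\rho$-stable.

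Combining (a), (b) and (c) with the disjoint union~\eqref{eqn union} upgrades the $F$-vector-space equality~\eqref{eqn M dec} to the claimed decomposition of $F[N_\rho/P_\rho]$-modules. I do not anticipate any real obstacle; the only thing requiring care is the sign bookkeeping in step~(c), which is precisely where the standing assumption that $p$ is odd is used.
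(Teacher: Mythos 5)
Your proposal is correct and takes essentially the same route as the paper: identify the fixed tabloids as a basis of the Brauer quotient (Corollary~\ref{C:basisBroueM}(i)), observe that $N_\rho$ permutes $P_\rho$-orbits of equal size and hence preserves $\rho$-type, and observe that $P_\rho$ fixes each tabloid in the basis outright. The paper compresses this into the two sentences immediately preceding the lemma, while you spell out the sign bookkeeping for step~(c), which is a useful clarification.

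One small imprecision in step~(c): it is not true that every generator of $P_\rho$ is a single $p^i$-cycle. For $i\ge 2$ the group $P_{p^i}=P_{p^{i-1}}\wr P_p$ is generated (in the natural embedding into $\sym{p^i}$) by elements that are \emph{products} of disjoint shorter cycles, not $p^i$-cycles. The correct (and simpler) justification is the one the paper uses implicitly and makes explicit in the proof of Proposition~\ref{P:vertices}: every element of $P_\rho$ has $p$-power order with $p$ odd, so if $g\{\T\}=\pm\{\T\}$ then applying $g$ repeatedly forces the sign to be $+1$; equivalently, a $p$-group with $p$ odd lies in $\alt{n}$. This replaces your generator-by-generator claim, and the rest of the argument goes through unchanged.
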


By Lemma \ref{L:rhotype}, to understand
the Brauer quotient $M(\alpha|\beta)(P_\rho)$ of the signed Young permutation module $M(\alpha|\beta)$,
it suffices to understand each of the $F[N_\rho/P_\rho]$-modules
$F\Omega\bigl( (\alpha|\beta),\rho \bigr)_{(\mbf{\gamma}|\mbf{\delta})}$.


\begin{defn}\label{D:theta}
Suppose that $(\alpha|\beta)\in\C^2(n)$ and that $\rho=(1^{n_0},p^{n_1},\ldots,(p^r)^{n_r})$
is a partition of $n$.
Let the orbits of $P_\rho$ of size $p^i$ be $\mathcal{O}_{i,1},\ldots,\mathcal{O}_{i,n_i}$.
Let
\[\Theta:\Omega\bigl( (\alpha|\beta),\rho\bigr)\to \bigcup_{(\mbf{\gamma}|\mbf{\delta})\in \Lambda((\alpha|\beta),\rho)}\, \prod_{i=0}^r\Omega(\mbf{\gamma}_i|\mbf{\delta}_i)\] be the bijective function defined as follows. Suppose that $\{\T\}\in\Omega(\alpha|\beta)$ is of $\rho$-type $(\mbf{\gamma}|\mbf{\delta})$. For each $0\leq i\leq r$, let $\{\T_i\}$ be the $(\mbf{\gamma}_i|\mbf{\delta}_i)$-tabloid such that $\T_i$ is row standard, and row $k$ of $(\T_i)_+$ (respectively, $(\T_i)_-$) contains $j$ if and only if row $k$ of $\T_+$ (respectively, $\T_-$) contains the orbit $\O_{i,j}$. Define $\Theta(\{\T\})=(\{\T_i\})_{i=0,1,\ldots,r}$.
\end{defn}

We note that, by definition of $P_\rho$,
$$\mathcal{O}_{i,j}=\Bigl\{ (j-1)p^i+1+\sum_{\ell=0}^{i-1}n_\ell p^\ell,\ldots,jp^i+\sum_{\ell=0}^{i-1}n_\ell p^\ell \Bigr\},$$
for $i\in\{0,\ldots,r\}$ and $j\in\{1,\ldots,n_i\}$.
Clearly, the bijection $\Theta$ in Definition \ref{D:theta} restricts to a bijection,
also denoted $\Theta$,
 \[\Theta:\Omega\bigl( (\alpha|\beta),\rho\bigr)_{(\mbf{\gamma}|\mbf{\delta})}\to \prod_{i=0}^r\Omega(\mbf{\gamma}_i|\mbf{\delta}_i).\] Since $|\Omega(\mbf{\gamma}_i|\mbf{\delta}_i)|=\dim_F M(\mbf{\gamma}_i|\mbf{\delta}_i)=[\sym{n_i}:(\sym{\mbf{\gamma}_i}\times \sym{\mbf{\delta}_i})]$, we obtain the following lemma.

\begin{lem}\label{lemma size O}
Let $(\alpha|\beta)\in\C^2(n)$, $\rho=(1^{n_0},p^{n_1},\ldots,(p^r)^{n_r})$ such that $|\rho|=n$, and let $(\mbf{\gamma}|\mbf{\delta})\in \LambdaABR$. Set
$$H=\prod_{i=0}^r N_{p^i}\wr(\sym{\mbf{\gamma}_i}\times \sym{\mbf{\delta}_i})=\prod_{i=0}^r(N_{p^i}\wr\sym{\mbf{\gamma}_i})\times (N_{p^i}\wr\sym{\mbf{\delta}_i})\leq N_\rho\,.$$
Then $|\Omega((\alpha|\beta),\rho)_{(\mbf{\gamma}|\mbf{\delta})}|=[N_\rho:H]$.
\end{lem}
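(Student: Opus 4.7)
The plan is to compute both sides directly and observe that they agree; this is a routine counting argument with no real obstacle, so I expect the author's proof to be very short.

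For the left-hand side, the bijection $\Theta$ of Definition \ref{D:theta} restricts to a bijection $\Omega((\alpha|\beta),\rho)_{(\mbf{\gamma}|\mbf{\delta})}\to \prod_{i=0}^r\Omega(\mbf{\gamma}_i|\mbf{\delta}_i)$. As noted immediately before the lemma statement, $|\Omega(\mbf{\gamma}_i|\mbf{\delta}_i)|=\dim_FM(\mbf{\gamma}_i|\mbf{\delta}_i)=[\sym{n_i}:\sym{\mbf{\gamma}_i}\times\sym{\mbf{\delta}_i}]$, since $M(\mbf{\gamma}_i|\mbf{\delta}_i)$ is by definition induced from a one-dimensional representation of the Young subgroup $\sym{\mbf{\gamma}_i}\times\sym{\mbf{\delta}_i}$ of $\sym{n_i}$. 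Taking products, the left-hand side equals $\prod_{i=0}^r[\sym{n_i}:\sym{\mbf{\gamma}_i}\times\sym{\mbf{\delta}_i}]$.

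For the right-hand side, the standard description of the normalizer of a Sylow $p$-subgroup of a symmetric group (which also underlies the isomorphism in Corollary \ref{C:basisBroueM}(ii)) gives
\[
N_\rho = \prod_{i=0}^r \bigl(N_{p^i}\wr \sym{n_i}\bigr).
\]
In each factor, the base group of $N_{p^i}\wr\sym{n_i}$ is $N_{p^i}^{n_i}$, and this is exactly the base group of the corresponding factor $N_{p^i}\wr(\sym{\mbf{\gamma}_i}\times\sym{\mbf{\delta}_i})$ of $H$. Consequently the index splits as
\[
[N_\rho:H] \;=\; \prod_{i=0}^r\frac{|\sym{n_i}|}{|\sym{\mbf{\gamma}_i}\times\sym{\mbf{\delta}_i}|} \;=\; \prod_{i=0}^r[\sym{n_i}:\sym{\mbf{\gamma}_i}\times\sym{\mbf{\delta}_i}].
\]

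The two expressions agree, which is the claim. Essentially the only content is that $\Theta$ has the stated cardinality on fibres and that indices in wreath products with a common base group reduce to indices of top groups; both facts have already been set up in the text.
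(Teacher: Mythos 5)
Your proof is correct and takes essentially the same approach as the paper: the authors also reduce the left-hand side to $\prod_{i=0}^r[\sym{n_i}:\sym{\mbf{\gamma}_i}\times\sym{\mbf{\delta}_i}]$ via the bijection $\Theta$ (this is stated in the sentence immediately preceding the lemma), and the index $[N_\rho:H]$ collapses in the same way since the base groups of the wreath products cancel.
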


We have reached the main result of this section.

\begin{prop}\label{P:Broueperm} Suppose that $(\alpha|\beta)\in\C^2(n)$ and
that $\rho=(1^{n_0},p^{n_1},\ldots,(p^r)^{n_r}) \in \C(n)$. 
Regarded as an $F[N_\rho/P_\rho]$-module, the Brauer quotient  $M(\alpha|\beta)(P_\rho)$
of the signed Young permutation module $M(\alpha|\beta)$
with respect to $P_\rho$ satisfies
\[M(\alpha|\beta)(P_\rho)  \cong\!\!\bigoplus_{(\mbf{\gamma}|\mbf{\delta})\in\Lambda((\alpha|\beta),\rho)} W_1(\mbf{\gamma}_0|\mbf{\delta}_0)\boxtimes W_p(\mbf{\gamma}_1|\mbf{\delta}_1)\boxtimes \cdots\boxtimes W_{p^r}(\mbf{\gamma}_r|\mbf{\delta}_r).\] 
\end{prop}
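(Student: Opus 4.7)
The plan is as follows. By Lemma~\ref{L:rhotype} it suffices to show, for each $(\mbf{\gamma}|\mbf{\delta}) \in \LambdaABR$, that
\[ F\Omega\bigl((\alpha|\beta),\rho\bigr)_{(\mbf{\gamma}|\mbf{\delta})} \;\cong\; W_1(\mbf{\gamma}_0|\mbf{\delta}_0) \boxtimes W_p(\mbf{\gamma}_1|\mbf{\delta}_1) \boxtimes \cdots \boxtimes W_{p^r}(\mbf{\gamma}_r|\mbf{\delta}_r) \]
as $F[N_\rho/P_\rho]$-modules. Let $H = \prod_{i=0}^r N_{p^i}\wr(\sym{\mbf{\gamma}_i}\times\sym{\mbf{\delta}_i}) \le N_\rho$ be as in Lemma~\ref{lemma size O}, and set
\[ L_i = F(N_{p^i}\wr\sym{\mbf{\gamma}_i}) \boxtimes \widehat{\sgn(N_{p^i})}^{\otimes\mbf{\delta}_i}, \qquad L = L_0 \boxtimes L_1 \boxtimes \cdots \boxtimes L_r, \]
a $1$-dimensional $FH$-module on which $P_\rho$ acts trivially. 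I will show that both sides of the displayed isomorphism are, after inflation to $FN_\rho$-modules, isomorphic to $\Ind_H^{N_\rho}L$.

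The right-hand identification is formal. By the alternative description of $W_{p^i}(\mbf{\gamma}_i|\mbf{\delta}_i)$ in~\eqref{eq:W}, together with the fact that induction in the direct product $N_\rho = \prod_i N_{p^i}\wr\sym{n_i}$ from a product subgroup is the outer tensor product of factorwise inductions, the outer tensor product $\boxtimes_{i=0}^r W_{p^i}(\mbf{\gamma}_i|\mbf{\delta}_i)$ is the inflation along $N_\rho \twoheadrightarrow N_\rho/P_\rho$ of $\Ind_H^{N_\rho}L$.

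For the left-hand identification I fix the unique row-standard $(\alpha|\beta)$-tabloid $\{\T^\star\}$ of $\rho$-type $(\mbf{\gamma}|\mbf{\delta})$ satisfying $\Theta(\{\T^\star\}) = \bigl(\{\T^{\mbf{\gamma}_i|\mbf{\delta}_i}\}\bigr)_{i=0}^{r}$ and check three points. First, $N_\rho$ acts transitively on $\Omega\bigl((\alpha|\beta),\rho\bigr)_{(\mbf{\gamma}|\mbf{\delta})}$; this is immediate since $N_\rho$ can realise any distribution of $P_\rho$-orbits into rows of $\T_{\pm}$ compatible with the given $\rho$-type. Second, the setwise $N_\rho$-stabiliser of the line $F\{\T^\star\}$ equals $H$: the inclusion of $H$ in the stabiliser is clear, and the reverse follows because any $g \in N_\rho$ with $g\{\T^\star\} \in F\{\T^\star\}$ must preserve the multiset of $P_\rho$-orbits in each row of $\T^\star_+$ and of $\T^\star_-$, forcing $g \in H$. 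Third, the resulting $FH$-action on $F\{\T^\star\}$ must agree with $L$. The first two points, together with Lemma~\ref{lemma size O}, identify $F\Omega\bigl((\alpha|\beta),\rho\bigr)_{(\mbf{\gamma}|\mbf{\delta})}$ with $\Ind_H^{N_\rho}F\{\T^\star\}$ as $FN_\rho$-modules via the characterisation of induced modules in \cite[Section~8, Corollary~3]{Alperin}.

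The third point is where signs must be tracked with care, and is the main obstacle. Write $g \in H$ as a product over $i$ of $g_i^+ \in N_{p^i}\wr\sym{\mbf{\gamma}_i}$ and $g_i^- = (h_1,\ldots,h_{|\mbf{\delta}_i|};\sigma) \in N_{p^i}\wr\sym{\mbf{\delta}_i}$. Each $g_i^+$ fixes $\{\T^\star\}$ on the nose since it only permutes elements within rows of $\T^\star_+$, matching the trivial factor of $L_i$. Each $h_j$ permutes its orbit $\mathcal{O}_{i,j}$, which lies wholly in a single row of $\T^\star_-$, as a permutation of sign $\sgn(h_j)$, and $\sigma$ acts as a block permutation of size-$p^i$ blocks within rows of $\T^\star_-$, contributing $\sgn(\sigma)^{p^i} = \sgn(\sigma)$ — here the hypothesis that $p$ (hence $p^i$) is odd is essential. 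The product $\sgn(\sigma)\prod_j\sgn(h_j)$ is exactly the value at $g_i^-$ of the character of $\widehat{\sgn(N_{p^i})}^{\otimes\mbf{\delta}_i}$, so $F\{\T^\star\} \cong L$ as $FH$-modules, closing the chain of isomorphisms.
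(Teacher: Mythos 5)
Your proposal is correct and follows essentially the same route as the paper: after reducing to a single $\rho$-type via Lemma~\ref{L:rhotype}, you pick the same distinguished tabloid $\{\T^\star\}$ (which the paper calls $\{\mathrm{S}\}$), identify its span with the $1$-dimensional $FH$-module $L$, and conclude via Alperin's characterisation of induced modules and the dimension count from Lemma~\ref{lemma size O} that $F\Omega((\alpha|\beta),\rho)_{(\mbf{\gamma}|\mbf{\delta})} \cong \Ind_H^{N_\rho}L$, then recognise the right-hand side through~\eqref{eq:W}. The only (minor and harmless) deviation is that you also verify directly that $H$ is the full line stabiliser and spell out the sign computation showing $F\{\T^\star\} \cong L$ as $FH$-modules — the paper merely observes that $F\{\mathrm{S}\}$ is $H$-stable with the stated isomorphism and lets the dimension count do the rest. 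Your explicit check of the sign $\sgn(\sigma)^{p^i}=\sgn(\sigma)$, using that $p$ is odd, is exactly the point the paper elides.
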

\begin{proof} Recall that for each $(\lambda|\mu) \in \C^2(n)$,
we have defined a row-standard $(\lambda|\mu)$-tableau $\T^{\lambda|\mu}$  immediately after
Definition~\ref{defn:pairTableau}.
Fix $(\mbf{\gamma}|\mbf{\delta})\in\Lambda((\alpha|\beta), \rho)$ and let $Z=F\Omega\bigl((\alpha|\beta),\rho\bigr)_{(\mbf{\gamma}|\mbf{\delta})}$. By Lemma \ref{L:rhotype}, it suffices to show that
$Z\cong \largeboxtimes_{i=0}^r W_{p^i}(\mbf{\gamma}_i|\mbf{\delta}_i)$ as $FN_\rho$-modules with
 $P_\rho$ acting trivially, or equivalently, by~\eqref{eq:W}, that
 \begin{equation}\label{eq:Z}
 Z\cong \largeboxtimes_{i=0}^r \Ind_{N_{p^i}\wr(\sym{\mbf{\gamma}_i}\times\sym{\mbf{\delta}_i})}^{N_{p^i}\wr\sym{n_i}}\big(F(N_{p^i}\wr \sym{\mbf{\gamma}_i})\boxtimes\widehat{\sgn(N_{p^i})}^{\otimes\mbf{\delta}_i}\big). \end{equation}
 Let $\{\mathrm{S}\}\in\Omega\bigl((\alpha|\beta),\rho\bigr)_{(\mbf{\gamma}|\mbf{\delta})}$ be the unique $(\alpha|\beta)$-tabloid such that \[\Theta(\{\mathrm{S}\})=(\T^{\mbf{\gamma}_0|\mbf{\delta}_0},\T^{\mbf{\gamma}_1|\mbf{\delta}_1},\ldots,\T^{\mbf{\gamma}_r|\mbf{\delta}_r})\in \prod_{i=0}^r\Omega(\mbf{\gamma}_i|\mbf{\delta}_i).\]
 Using the $N_\rho$-action on $Z$, we observe that $Z$ is a cyclic $FN_\rho$-module generated by $\{\mathrm{S}\}$. Let~$X$ be the subspace of $Z$ linearly spanned by
  $\{\mathrm{S}\}$. By the definition of $\{\mathrm{S}\}$, the subspace~$X$ is an $FH$-module where
  \[H=\prod_{i=0}^r N_{p^i}\wr(\sym{\mbf{\gamma}_i}\times\sym{\mbf{\delta}_i})=\prod_{i=0}^r\bigl( (N_{p^i}\wr\sym{\mbf{\gamma}_i})\times (N_{p^i}\wr\sym{\mbf{\delta}_i}) \bigr) \le N_\rho,\]
  and there is an isomorphism
  \[X\cong \big(F(N_{1}\wr\sym{\mbf{\gamma}_0})\boxtimes  \widehat{\sgn(N_{1})}^{\otimes \mbf{\delta}_0}\big)\boxtimes\cdots \boxtimes \big(F(N_{p^r}\wr\sym{\mbf{\gamma}_r})\boxtimes \widehat{\sgn(N_{p^r})}^{\otimes \mbf{\delta}_r}\big)\]
  of $FH$-modules. Since $\dim_FZ=[N_\rho:H]\dim_F X$ by Lemma \ref{lemma size O}, we have $Z\cong \Ind^{N_\rho}_HX$ by
  the characterization of induced modules in
\cite[Section 8, Corollary 3]{Alperin}.
Hence we obtain the isomorphism~\eqref{eq:Z} as desired.
\end{proof}


%

\section{Young modules and signed Young modules}
\label{S:YoungConstruction}

In this section we define Young modules and signed Young modules in the setting of the symmetric group
and prove Theorem~\ref{T:Donkin}.

\subsection{Vertices}
As a first step we identify the possible vertices of summands of signed Young modules.
Recall from Subsection~\ref{Sec:Syl} that $P_k$ denotes a Sylow subgroup
of~$\sym{k}$ and, if $\rho$ is a partition of $n$, then $P_\rho$
denotes a Sylow subgroup of the Young subgroup~$\sym{\rho}$ of~$\sym{n}$.
We require the following lemma from \cite{ErdmannYoung}; a proof, slightly shorter than the one in \cite{ErdmannYoung},
is included to make the article self-contained.

\begin{lem}[Erdmann \protect{\cite[Lemma 1]{ErdmannYoung}}]
\label{lem:Erdmann}
Let $G$ be a finite group and let $M$ be a $p$-permutation $FG$-module.
If $P$ and $\widetilde{P}$ are $p$-subgroups of $G$
such that $P < \widetilde{P}$ and $\dim M(P) = \dim M(\widetilde{P})$
then no indecomposable summand of $M$ has vertex $P$.
\end{lem}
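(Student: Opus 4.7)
The plan is to reduce to indecomposable summands via the Krull--Schmidt theorem, establish a monotonicity property for Brauer quotient dimensions, and obtain a contradiction from Theorem~\ref{BT2} under the assumption that some summand has vertex~$P$.

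First I would prove the following monotonicity statement: for any $p$-permutation $FG$-module $N$ and any pair of $p$-subgroups $Q\le \widetilde{Q}$ of $G$, we have $\dim N(\widetilde{Q})\le \dim N(Q)$. This is immediate from Corollary~\ref{cor:Brauer}: choose a Sylow $p$-subgroup $S$ of $G$ containing $\widetilde{Q}$ (and hence $Q$) and let $\mathcal{B}$ be a $p$-permutation basis of $N$ with respect to $S$. Since any basis element fixed by $\widetilde{Q}$ is automatically fixed by the smaller group $Q$, we have $\mathcal{B}^{\widetilde{Q}}\subseteq \mathcal{B}^Q$, and these sets are bases of $N(\widetilde{Q})$ and $N(Q)$ respectively.

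Next I would decompose $M=\bigoplus_{i=1}^s M_i$ into indecomposable direct summands. Each $M_i$ is a $p$-permutation module, being a direct summand of one. Since the Brauer construction commutes with direct sums,
\[ 0 \;=\; \dim M(P)-\dim M(\widetilde{P}) \;=\; \sum_{i=1}^{s}\bigl(\dim M_i(P)-\dim M_i(\widetilde{P})\bigr). \]
By the first step every summand on the right is non-negative, so each summand must vanish; that is, $\dim M_i(P)=\dim M_i(\widetilde{P})$ for every $i$.

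Finally, suppose for contradiction that some indecomposable summand $M_j$ has vertex $P$. By Theorem~\ref{BT2}, $M_j(P)\neq 0$, whereas $M_j(\widetilde{P})\neq 0$ would require $\widetilde{P}\le {}^gP$ for some $g\in G$, which is impossible since $|\widetilde{P}|>|P|$. Hence $\dim M_j(P)>0=\dim M_j(\widetilde{P})$, contradicting the equality obtained in the previous paragraph. I do not expect any real obstacle here: the monotonicity is essentially a restatement of the $p$-permutation basis description of the Brauer quotient, and the remainder is bookkeeping with Krull--Schmidt and Theorem~\ref{BT2}.
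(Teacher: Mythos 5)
Your proof is correct and follows essentially the same approach as the paper's: establish monotonicity of Brauer quotient dimensions via $p$-permutation bases, split off the putative summand with vertex $P$, and derive a contradiction from Theorem~\ref{BT2}. The only cosmetic difference is that you decompose $M$ fully into indecomposables, whereas the paper writes $M = U \oplus V$ and applies the monotonicity argument only to the complement $V$; the reasoning is the same.
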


\begin{proof}
Suppose, for a contradiction, that $U$ is such a summand. Let $M = U \oplus V$
where $V$ is a complementary $FG$-module. By Corollary~\ref{cor:Brauer}, we have $U(P) \not=0$
and $U(\widetilde{P}) = 0$. Thus
\[  M(\widetilde{P}) = U(\widetilde{P}) \oplus V(\widetilde{P}) = V(\widetilde{P}),\ \ \text{and}\ \ M(P) = U(P) \oplus V(P). \]
This is a contradiction, since taking a $p$-permutation basis for $V$ and applying
Corollary~\ref{cor:Brauer} shows that $\dim V(P) \ge \dim V(\widetilde{P})$.
\end{proof}

\begin{prop}
\label{P:vertices}
Let $(\alpha | \beta) \in \C^2(n)$. If $P$ is a vertex of an indecomposable summand of $M(\alpha|\beta)$
then there exists $\rho = (1^{n_0},p^{n_1},\ldots, (p^r)^{n_r}) \in \C(n)$ such that
$P$ is conjugate in $\sym{n}$ to $P_\rho$.
\end{prop}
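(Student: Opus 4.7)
The plan is to use the orbit structure of $P$ to define $\rho$, and then invoke Erdmann's Lemma \ref{lem:Erdmann} to rule out the possibility that $P$ sits properly inside a Sylow $p$-subgroup of the stabilizer of its orbit partition.

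First I would define $\rho$ from $P$. Since $P$ is a $p$-group, every $P$-orbit on $\{1,\ldots,n\}$ has $p$-power length; let $n_i$ denote the number of orbits of length $p^i$, so that $\rho=(1^{n_0},p^{n_1},\ldots,(p^r)^{n_r})\in\C(n)$. Write $\Pi$ for the set partition of $\{1,\ldots,n\}$ into $P$-orbits. The setwise stabilizer $\sym{\Pi}$ of $\Pi$ in $\sym{n}$ is a Young subgroup whose part multiset coincides with that of $\sym{\rho}$, so $\sym{\Pi}={}^h\sym{\rho}$ for some $h\in\sym{n}$ carrying the standard partition of $P_\rho$ onto $\Pi$. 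Since $P$ is a $p$-subgroup of $\sym{\Pi}$, Sylow's theorem applied inside $\sym{\Pi}$ supplies $g\in\sym{\Pi}$ such that $P\leq\tilde P:={}^{gh}P_\rho$. As $g$ fixes $\Pi$, the group $\tilde P$ still has $\Pi$ as its orbit partition on $\{1,\ldots,n\}$.

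I would then compare $M(\alpha|\beta)(P)$ and $M(\alpha|\beta)(\tilde P)$. By Lemma \ref{L:signpermbasis}(ii) together with Corollary \ref{cor:Brauer}, for any $p$-subgroup $Q$ of $\sym{n}$ a basis of $M(\alpha|\beta)(Q)$ is given by the $(\alpha|\beta)$-tabloids whose rows are unions of $Q$-orbits (this is precisely the description stated just after Corollary \ref{C:basisBroueM}). Since $P$ and $\tilde P$ have the same orbit partition $\Pi$, these two bases coincide and so
\[\dim_F M(\alpha|\beta)(P)=\dim_F M(\alpha|\beta)(\tilde P).\]
If $P$ were a proper subgroup of $\tilde P$, Lemma \ref{lem:Erdmann} would then say that no indecomposable summand of $M(\alpha|\beta)$ has vertex $P$, contradicting our hypothesis. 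Hence $P=\tilde P$, which by construction is $\sym{n}$-conjugate to $P_\rho$.

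The only delicate point is the construction of $\tilde P$ in the second step: one needs $P\leq\tilde P$ \emph{and} $\tilde P$ to share its orbit partition with $P$ simultaneously, since both are required for Erdmann's Lemma to apply. Running Sylow's theorem \emph{inside} $\sym{\Pi}$ (rather than in all of $\sym{n}$) is exactly what guarantees this.
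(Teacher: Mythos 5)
Your proposal is correct and follows essentially the same route as the paper's own proof: identify the Young subgroup stabilizing the $P$-orbit partition, take a Sylow $p$-subgroup $\widetilde P$ of it containing $P$, show the Brauer quotients have the same dimension via the tabloid description following Corollary~\ref{C:basisBroueM}, and conclude by Lemma~\ref{lem:Erdmann}. The only (minor and helpful) difference is that you are explicit about running Sylow's theorem inside $\sym\Pi$ to guarantee $P\leq\widetilde P$, a point the paper leaves implicit.
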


\begin{proof}
Let $H$ be the Young subgroup of $\sym{n}$ having the same orbits as $P$ on $\{1,\ldots, n\}$
and let $\widetilde{P}$ be a Sylow $p$-subgroup of $H$. Note that $\widetilde{P}$ has the same orbits
on $\{1,\ldots,n\}$ as~$P$: suppose that each subgroup has exactly $n_i$ orbits of size~$p^i$ for
each $i \in \{0,\ldots,r\}$, so $\widetilde{P}$
is conjugate in $\sym{n}$ to $P_\rho$. It suffices to prove that $P = \widetilde{P}$.

Let $\{\T\}$ be an $(\alpha|\beta)$-tabloid fixed by $P$. As remarked following Corollary~\ref{C:basisBroueM},
each row of $\T$ is a union of orbits of $P$. Therefore each row is a union of orbits of $\widetilde{P}$,
and so if $g \in \widetilde{P}$ then $g \{\T\} = \pm \{\T\}$. Since $g$ has $p$-power order,
we see that $g  \{\T\} = \{\T\}$. It now follows from
Corollary~\ref{C:basisBroueM} that $\dim M(\alpha|\beta)(P) =\dim M(\alpha|\beta)(\widetilde{P})$.
By Lemma~\ref{lem:Erdmann} we have $P = \widetilde{P}$, as required.
\end{proof}

Combining Proposition~\ref{P:Broueperm} and Proposition~\ref{P:vertices}, we see that
the Brou{\'e} correspondents of the non-projective indecomposable summands of
$M(\alpha|\beta)$ are certain outer tensor products of the projective
indecomposable summands of the $F[(N_{p^i}/P_{p^i} )\wr \sym{m}]$-modules $W_{p^i}(\gamma|\delta)$
in Definition~\ref{D:VW}.
 In fact, it is most convenient to factor out a further subgroup that acts trivially, and
consider projective summands of the $F[C_2 \wr \sym{m}]$-modules
$\overline{W}_{\!p^i}(\gamma|\delta)$.


\subsection{Projective summands of $\overline{W}_{k}(\gamma|\delta)$}\label{Sec:Qbar}

Fix $k \in \N$ and $m_1, m_2 \in \N_0$. Let $m = m_1+m_2$.
Recall from Subsection~\ref{Sec:Modules}
that if $\alpha \in \RP(n)$, that is,~$\alpha$ is a $p$-restricted partition of $n$,
then  $\Palpha$ denotes the projective cover of the simple $F\sym{n}$-module~$D_\alpha$.

Let $\mathcal{F}$ denote the bifunctor sending a pair $(U|V)$ where $U$ is an $F\sym{m_1}$-module
and $V$ is an $F\sym{m_2}$-module  to
\[ \Ind_{C_2\wr(\sym{m_1}\times\sym{m_2})}^{C_2\wr\sym{m}}\left
(\Inf^{C_2\wr\sym{m_1}}_{\sym{m_1}}(U)\boxtimes \bigl(\Inf^{C_2\wr\sym{m_2}}_{\sym{m_2}}(V)\otimes \widehat{\sgn(2)}^{\otimes m_2}\bigr)\right ).\]
For example, the module $\overline{W}_k(\gamma|\delta)$ in Definition~\ref{D:VW} satisfies
$\overline{W}_k(\gamma|\delta) \cong \mathcal{F}(M^\gamma|M^\delta)$.

\begin{defn}\label{def:Qbar}
Let $(\alpha|\beta) \in \RP^2(m)$.
We define $\overline{Q}(\alpha|\beta) = \mathcal{F}(\Palpha|\Pbeta)$.
\end{defn}

Thus, by definition
\[ \overline{Q}(\alpha|\beta)\cong \Ind_{C_2\wr(\sym{m_1}\times \sym{m_2})}^{C_2\wr\sym{m}}\left(
\Inf^{C_2\wr\sym{m_1}}_{\sym{m_1}}(\Palpha)\boxtimes \bigl(\Inf^{C_2\wr\sym{m_2}}_{\sym{m_2}}(\Pbeta)\otimes \widehat{\sgn(2)}^{\otimes m_2}\bigr) \right). \]
Example~\ref{eg:projectives} gives an example of these modules.
Note that each tensor factor is projective, so each $\overline{Q}(\alpha|\beta)$ is projective.


\begin{lem}
\label{lem:Qcover}
The $F[C_2 \wr \sym{m}]$-modules
\[ \mathcal{F}(D_\alpha|D_\beta)
= \Ind_{C_2\wr(\sym{m_1}\times\sym{m_2})}^{C_2\wr\sym{m}}\left
(\Inf^{C_2\wr\sym{m_1}}_{\sym{m_1}}(D_\alpha)\boxtimes \bigl(\Inf^{C_2\wr\sym{m_2}}_{\sym{m_2}}(D_\beta)\otimes \widehat{\sgn(2)}^{\otimes m_2}\bigr)\right ) \]
for $(\alpha|\beta) \in \RP^2(m)$ form a complete
set of non-isomorphic simple $F[C_2 \wr \sym{m}]$-modules. Moreover,
the $F[C_2 \wr \sym{m}]$-module $\overline{Q}(\alpha|\beta)$ is the  projective cover of
$\mathcal{F}(D_\alpha|D_\beta)$ and the modules $\overline{Q}(\alpha|\beta)$ for
$(\alpha|\beta) \in \RP^2(m)$ form a complete set of non-isomorphic indecomposable
projective modules for $F[C_2 \wr \sym{m}]$.
\end{lem}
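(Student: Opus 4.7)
The plan is to apply Clifford theory to the normal subgroup $C_2^m$ of $C_2 \wr \sym{m}$, which has order $2^m$ coprime to $p$ so that $FC_2^m$ is semisimple. The one-dimensional simple $FC_2^m$-modules are parametrized by subsets $S \subseteq \{1,\ldots,m\}$, where $S$ indexes the character acting as $\sgn$ on the $i$th copy of $C_2$ for $i \in S$ and trivially elsewhere. The conjugation action of $\sym{m}$ permutes these via its natural action on $\{1,\ldots,m\}$, so orbits are classified by $|S|$; a distinguished representative with $|S|=m_2$ is the character $\epsilon$ supported on the last $m_2$ coordinates, whose stabilizer in $C_2 \wr \sym{m}$ is $T = C_2 \wr (\sym{m_1} \times \sym{m_2})$ with $m_1 = m-m_2$.

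Next I would apply the standard Clifford correspondence: the simple $F[C_2 \wr \sym{m}]$-modules are exactly the inductions from $T$ of the simple $FT$-modules lying over $\epsilon$, and distinct pairs yield non-isomorphic simples. Since the kernel $C_2^{m_i}$ of the projection $C_2 \wr \sym{m_i} \twoheadrightarrow \sym{m_i}$ has order coprime to $p$, the simple $F[C_2 \wr \sym{m_i}]$-modules on which $C_2^{m_i}$ acts trivially are precisely $\Inf^{C_2 \wr \sym{m_i}}_{\sym{m_i}} D_\alpha$ for $\alpha \in \RP(m_i)$. Tensoring the second factor with the $1$-dimensional module $\widehat{\sgn(2)}^{\otimes m_2}$ twists the trivial $C_2^{m_2}$-action into the sign-on-each-factor action required by $\epsilon$. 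Hence the simple $FT$-modules lying over $\epsilon$ are exactly $\Inf(D_\alpha) \boxtimes (\Inf(D_\beta) \otimes \widehat{\sgn(2)}^{\otimes m_2})$ with $\alpha\in\RP(m_1)$ and $\beta\in\RP(m_2)$, and inducing yields $\mathcal{F}(D_\alpha|D_\beta)$ as a complete list of non-isomorphic simple $F[C_2 \wr \sym{m}]$-modules as $(\alpha|\beta)$ runs over $\RP^2(m)$.

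For the projective statement, I would verify that $\overline{Q}(\alpha|\beta) = \mathcal{F}(\Palpha|\Pbeta)$ is the $C_2\wr\sym{m}$-induction of the $FT$-module $\Inf(\Palpha)\boxtimes (\Inf(\Pbeta)\otimes \widehat{\sgn(2)}^{\otimes m_2})$, and that this latter module is the projective cover of the simple $\Inf(D_\alpha)\boxtimes(\Inf(D_\beta) \otimes\widehat{\sgn(2)}^{\otimes m_2})$ in $FT$-mod: inflation through a $p'$-kernel preserves projective covers (indeed, $\Ind^{C_2\wr\sym{m_i}}_{C_2^{m_i}} F$ is a projective $F[C_2\wr\sym{m_i}]$-module into which any inflated projective embeds as a summand of a power), tensoring with the $1$-dimensional module $\widehat{\sgn(2)}^{\otimes m_2}$ is an auto-equivalence, and outer tensor product of projective covers is a projective cover (as $F$ is a splitting field for both factors). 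The standard Clifford-theoretic fact that induction from an inertia group carries projective covers of simples lying over $\epsilon$ to projective covers of their inductions then identifies $\overline{Q}(\alpha|\beta)$ with the projective cover of $\mathcal{F}(D_\alpha|D_\beta)$; in particular $\overline{Q}(\alpha|\beta)$ is indecomposable. Completeness of the list of indecomposable projectives then follows from the bijection between non-isomorphic simples and non-isomorphic indecomposable projectives.

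The main obstacle I anticipate is the careful bookkeeping around the sign twist: one must verify that the $C_2^{m_2}$-action arising from $\widehat{\sgn(2)}^{\otimes m_2}$ is exactly $\epsilon$ restricted to $C_2^{m_2}$, so that the explicit inflation-plus-twist formulas defining $\mathcal{F}$ really do match the Clifford-theoretic parametrization of both the simples and their projective covers. Once this identification is pinned down, the remainder is routine Clifford theory together with the standard interactions of projectivity with induction, inflation through a $p'$-kernel, and tensoring with $1$-dimensional modules.
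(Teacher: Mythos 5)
Your proof is correct and takes essentially the same Clifford-theoretic route as the paper, but you unpack the steps that the paper delegates to citations. For the classification of simples, the paper simply cites James--Kerber, Theorem 4.34; you spell out the decomposition along the normal $p'$-subgroup $C_2^m$, the orbits of $\sym{m}$ on its linear characters, the inertia group $T = C_2 \wr (\sym{m_1}\times\sym{m_2})$, and the correspondence. For the projective cover, the paper gives a shorter argument: $\overline{Q}(\alpha|\beta)$ is projective (each tensor factor is), functoriality of $\mathcal{F}$ gives a surjection onto $\mathcal{F}(D_\alpha|D_\beta)$, so the projective cover of $\mathcal{F}(D_\alpha|D_\beta)$ is a summand of $\overline{Q}(\alpha|\beta)$, and indecomposability of $\overline{Q}(\alpha|\beta)$ is extracted from Benson, Proposition 3.13.2 by identifying the inertia group of the inducing module. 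You instead track projective covers through each construction -- inflation through the $p'$-kernel $C_2^{m_i}$, the one-dimensional twist, outer tensor product, and induction from the inertia group -- and invoke the Fong--Reynolds-type equivalence at the last step. Both are sound; the paper's version is leaner because the surjection-plus-projectivity trick lets it avoid verifying that each intermediate operation preserves projective covers, while yours makes the mechanism explicit and would generalize more transparently. One small thing to watch in your version: the assertion that induction from the inertia group preserves projective covers is not completely automatic from the word ``Clifford theory''; it rests on the fact that $\Ind_T^G$ is one direction of a category equivalence between modules over $\theta$ and modules over the orbit of $\theta$, which is worth stating (or cite Benson 3.13 as the paper does).
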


\begin{proof}
The first claim follows from the construction of simple modules for wreath products
stated in Theorem 4.34 of \cite{JK}. For the second, note
that by functoriality, there is a surjection $\overline{Q}(\alpha|\beta) = \mathcal{F}(\Palpha|\Pbeta)
\rightarrow \mathcal{F}(D_\alpha|D_\beta)$. Hence the projective $F[C_2 \wr \sym{m}]$-module
$\overline{Q}(\alpha|\beta)$ has the projective cover of $\mathcal{F}(D_\alpha|D_\beta)$
as a summand. Since the inertial group of
\[ \Inf^{C_2\wr\sym{m_1}}_{\sym{m_1}}(P^\alpha)\boxtimes \bigl(\Inf^{C_2\wr\sym{m_2}}_{\sym{m_2}}(P^\beta)\otimes \widehat{\sgn(2)}^{\otimes m_2}\bigr) \]
is $C_2 \wr \sym{m_1} \times C_2 \wr \sym{m_2}$,
it follows from~\cite[Proposition 3.13.2]{Benson} that $\overline{Q}(\alpha|\beta)$ is indecomposable.
Therefore $\overline{Q}(\alpha|\beta)$ is the projective cover of $\mathcal{F}(D_\alpha|D_\beta)$.
\end{proof}

Let $G$ be a finite group. By Section 3.11 in \cite{Benson}, we may associate a character to a $p$-permutation
$FG$-module $M$
by taking a $p$-modular system $(K,\mathcal{O},F)$ compatible with $F$
and an $\mathcal{O}G$-module $M_\mathcal{O}$ whose $p$-modular reduction is $M$.
The \emph{ordinary character} of $M$ is then the character of the $KG$-module $K \otimes_\mathcal{O}
M_\mathcal{O}$.
If $M$ is projective and indecomposable, the ordinary character of $M$ may
equivalently be defined by Brauer reciprocity (see for instance \cite[Section 15.4]{Serre}).

\begin{prop}\label{P:dominates}
\label{P:Wsummands}
Let $(\gamma|\delta) \in \P^2(m)$ where $|\gamma| = m_1$ and $|\delta| = m_2$.
Each indecomposable projective summand of $\overline{W}_k(\gamma|\delta)$
is isomorphic to some $\overline{Q}(\alpha|\beta)$, where $(\alpha |\beta) \in \RP^2(m)$ satisfies
\begin{itemize}
\item[(i)] $|\alpha| = m_1$ and $|\beta| = m_2$;
\item[(ii)] $\alpha \unrhd \gamma$ and $\beta \unrhd \delta$.
\end{itemize}
\end{prop}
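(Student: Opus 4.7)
My plan is to compute the ordinary character of $\overline{W}_k(\gamma|\delta)$ in the basis of irreducible characters $\chi^{(\lambda|\mu)}$ of $C_2\wr\sym{m}$, and to combine this with a triangularity property of the ordinary characters of the projective covers $\overline{Q}(\alpha|\beta)$. Concretely, for $k\geq 2$ I would first identify $\overline{W}_k(\gamma|\delta)\cong V_2(\gamma|\delta)$ via Lemma~\ref{L:barWk}. Expanding $M^\gamma$ and $M^\delta$ into Specht characters by Young's rule, feeding this into Definition~\ref{D:VW}(i), and recognising the induced pieces as the characters $\chi^{(\alpha|\beta)}$ from Section~\ref{Sec:Modules}, I would deduce that the ordinary character of $\overline{W}_k(\gamma|\delta)$ equals
\[
\sum_{\substack{\alpha\unrhd\gamma,\ |\alpha|=m_1\\ \beta\unrhd\delta,\ |\beta|=m_2}} K_{\alpha,\gamma}\,K_{\beta,\delta}\,\chi^{(\alpha|\beta)},
\]
where $K_{\alpha,\gamma}$ and $K_{\beta,\delta}$ are Kostka numbers. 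The residual case $k=1$ is handled by a parallel computation using $W_1(\gamma|\delta)\cong M(\gamma|\delta)$ as an $F\sym{m}$-module, together with the fact that inflation along $C_2\wr\sym{m}\twoheadrightarrow\sym{m}$ preserves direct summands, since $|C_2^m|$ is coprime to $p$.

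Next, because $C_2^m$ is a normal subgroup of $C_2\wr\sym{m}$ of order coprime to $p$, Clifford theory factorises the decomposition matrix of $F[C_2\wr\sym{m}]$ along the $\sym{m}$-orbits of characters of $C_2^m$; these orbits are indexed by pairs $(m_1,m_2)$ with $m_1+m_2=m$ and have stabilisers $C_2\wr(\sym{m_1}\times\sym{m_2})$. Concretely, for $(\lambda|\mu)\in\P^2(m)$ and $(\alpha|\beta)\in\RP^2(m)$ one finds that the decomposition number $d_{(\lambda|\mu),(\alpha|\beta)}$ equals $d_{\lambda,\alpha}\,d_{\mu,\beta}$ when $|\lambda|=|\alpha|$ and $|\mu|=|\beta|$, and vanishes otherwise. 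James' classical triangularity theorem $d_{\lambda,\alpha}>0\Rightarrow\lambda\unrhd\alpha$ for $\alpha$ $p$-restricted, together with Brauer reciprocity, then forces the ordinary character $\Phi^{(\alpha|\beta)}$ of $\overline{Q}(\alpha|\beta)$ to be supported on $\chi^{(\lambda|\mu)}$ with $\lambda\unrhd\alpha$, $\mu\unrhd\beta$, $|\lambda|=|\alpha|$, and $|\mu|=|\beta|$; moreover $\chi^{(\alpha|\beta)}$ itself occurs in $\Phi^{(\alpha|\beta)}$ with coefficient exactly~$1$.

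To conclude, if $\overline{Q}(\alpha|\beta)$ is a direct summand of $\overline{W}_k(\gamma|\delta)$ with multiplicity $n\geq 1$, then writing the complementary summand as $R$ and comparing ordinary characters shows that the coefficient of $\chi^{(\alpha|\beta)}$ in the character of $\overline{W}_k(\gamma|\delta)$ is at least $n>0$, since $R$ contributes non-negative integer multiplicities to every $\chi^{(\lambda|\mu)}$. The character formula from the first paragraph then forces $|\alpha|=m_1$, $|\beta|=m_2$, $\alpha\unrhd\gamma$, and $\beta\unrhd\delta$, yielding both (i) and (ii). I expect the main obstacle to be the Clifford-theoretic factorisation of the decomposition matrix, where care is required to align the sign twist $\widehat{\sgn(2)}^{\otimes m_2}$ appearing both in the definition of $\chi^{(\lambda|\mu)}$ and in the simple module $\mathcal{F}(D_\alpha|D_\beta)$ whose projective cover is $\overline{Q}(\alpha|\beta)$ given by Lemma~\ref{lem:Qcover}.
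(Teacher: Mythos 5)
Your proof takes essentially the same route as the paper's: compute the ordinary character of $\overline{W}_k(\gamma|\delta)$, observe via the wedge shape of the decomposition matrix and Brauer reciprocity that $\chi^{(\alpha|\beta)}$ occurs in the ordinary character of $\overline{Q}(\alpha|\beta)$, then read off the constraints from Young's rule and Clifford theory. The paper is somewhat leaner: it does not write out the explicit Kostka--number expansion or the full product factorisation $d_{(\lambda|\mu),(\alpha|\beta)}=d_{\lambda,\alpha}d_{\mu,\beta}$; for the size constraint (i) it just restricts $\overline{W}_k(\gamma|\delta)$ to the base group $C_2^m$ and counts sign factors, and for (ii) it invokes Young's rule directly. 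All you actually need from the triangularity package is that $\chi^{(\alpha|\beta)}$ appears in $\Phi^{(\alpha|\beta)}$ with positive multiplicity, so the detailed factorisation of the decomposition matrix is correct but more than is required.

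One caveat you should handle more carefully is the case $k=1$. The character formula
$\sum_{\alpha\unrhd\gamma,\,\beta\unrhd\delta} K_{\alpha,\gamma}K_{\beta,\delta}\,\chi^{(\alpha|\beta)}$
relies on Lemma~\ref{L:barWk} and so only holds for $k\geq 2$. When $k=1$ the module $\overline{W}_1(\gamma|\delta)=\Inf^{C_2\wr\sym{m}}_{\sym{m}}M(\gamma|\delta)$ has the base group $C_2^m$ acting trivially, so its ordinary character is a combination of $\chi^{(\nu|\varnothing)}$ only, and the constraint $|\beta|=m_2$ (which is what makes the proposition useful) cannot come out of the argument you sketch; indeed, $\overline{Q}(\alpha|\beta)$ with $\beta\neq\varnothing$ cannot occur as a summand of $\overline{W}_1(\gamma|\delta)$ at all. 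So the ``parallel computation'' for $k=1$ does not run parallel. This is a harmless mismatch with the way the result is used: in the proof of Theorem~\ref{T:Donkin} the proposition is invoked only for $k=p^i$ with $i\geq 1$, and the $i=0$ factor is handled by the inductive hypothesis via $W_1(\mbf{\gamma}_0|\mbf{\delta}_0)\cong M(\mbf{\gamma}_0|\mbf{\delta}_0)$. You would do better to restrict the claim (and your proof) to $k\geq 2$ and drop the $k=1$ paragraph.
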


\begin{proof}

By Lemma~\ref{lem:Qcover}, each indecomposable projective summand of
$\overline{W}_k(\gamma|\delta)$ is isomorphic to some $\overline{Q}(\alpha|\beta)$.
By the `wedge' shape of the decomposition matrix of $\sym{n}$
with columns labelled by $p$-restricted partitions (see for instance \cite[Theorem~5.2]{KleshchevAMS})
and Brauer reciprocity,
the ordinary character of $\Palpha$ contains the irreducible character $\chi^\alpha$
exactly once. 
Hence the ordinary character of $\overline{Q}(\alpha|\beta)$
contains the character
\[ \chi^{(\alpha|\beta)} = \Ind_{C_2\wr(\sym{m_1}\times\sym{m_2})}^{C_2\wr\sym{m}}\left
(\Inf^{C_2\wr\sym{m_1}}_{\sym{m_1}} (\chi^\alpha) \times
(\Inf^{C_2\wr\sym{m_2}}_{\sym{m_2}}(\chi^\beta)\otimes \widehat{\sgn(2)}^{\otimes m_2})\right) \]
defined in Subsection~\ref{Sec:Modules}
exactly once.

We now consider when the ordinary character of $\overline{W}_k(\gamma|\delta)$  contains
$\chi^{(\alpha|\beta)}$.
The restriction of $\overline{W}_k(\gamma|\delta)$ to the base group in the wreath
product $C_2 \wr \sym{m}$ is a direct sum of $1$-dimensional submodules. In each such
submodule, $m_1$ of the factors in the product $C_2^m$ act trivially and $m_2$ of the factors act as $\sgn(2)$.
It follows by basic Clifford theory that
the ordinary character of $\overline{W}_k(\gamma|\delta)$
contains the character $\chi^{(\alpha|\beta)}$ only
if $|\alpha| = m_1$ and $|\beta| = m_2$.
By Young's rule (see for instance \cite[Theorem 13.13]{james1978representation}), the ordinary character
of $M^\gamma$ contains $\chi^\alpha$ only if $\alpha \unrhd \gamma$, and similarly
the ordinary character of $M^\delta$ contains $\chi^\beta$ only if $\beta \unrhd \delta$.
It follows that if $\overline{Q}(\alpha|\beta)$ is a summand of $\overline{W}_k(\gamma|\delta)$ then
$\alpha \in \P(m_1)$, $\beta \in \P(m_2)$,
 $\alpha \unrhd \gamma$ and $\beta \unrhd \delta$.
 \end{proof}

\subsection{Definition of signed Young modules}
We define signed Young modules as
the Brou{\'e} correspondents of
tensor products of suitable inflations of
the modules $\overline{Q}(\alpha|\beta)$. To make this precise, we need
the three further families of modules defined below: their definition follows
the same pattern as the $p$-permutation modules $V_k(\gamma|\delta)$, $W_k(\gamma|\delta)$
and $\overline{W}_k(\gamma|\delta)$ in Definition~\ref{D:VW}.

\begin{defn}\label{def:Rk}
Let $k \in \N$, let $m \in \N_0$, and let $(\alpha|\beta) \in \RP^2(m)$.
Let $m_1 = |\alpha|$ and $m_2 = |\beta|$.
The $F[\sym{k}\wr\sym{m}]$-module $R_k(\alpha|\beta)$ is defined by
\[R_k(\alpha|\beta)=\Ind_{\sym{k}\wr(\sym{m_1}\times\sym{m_2})}^{\sym{k}\wr\sym{m}}\left (\Inf^{\sym{k}\wr\sym{m_1}}_{\sym{m_1}}(\Palpha)\boxtimes \bigl(\Inf^{\sym{k}\wr\sym{m_2}}_{\sym{m_2}}(\Pbeta)\otimes \widehat{\sgn(k)}^{\otimes m_2}\bigr)\right ).\]
\end{defn}

By convention,
$R_k(\varnothing|\beta)=\Inf^{\sym{k}\wr\sym{m_2}}_{\sym{m_2}}(\Pbeta)\, \otimes\, \widehat{\sgn(k)}^{\otimes m_2}\!\!$, and similarly for $R_k(\alpha|\varnothing)$.
Furthermore if $m=0$, then $R_k(\varnothing|\varnothing)$ is the trivial $F\sym{0}$-module.
If $k=1$ then we identify $\sym{k}\wr\sym{m}$ with $\sym{m}$ and get
 \[R_1(\alpha|\beta)=\Ind^{\sym{m}}_{\sym{m_1}\times\sym{m_2}}\bigl( \Palpha\boxtimes (\Pbeta\otimes\sgn(m_2))
 \bigr).\]

Recall from Section~\ref{Sec:Syl}
that $P_k$ is a fixed Sylow $p$-subgroup of $\sym{k}$ and that $N_k = N_{\sym{k}}(P_k)$.

\begin{defn}\label{def:Qk}
Let $k \in \N$, let $m \in \N_0$, and let $(\alpha|\beta) \in \RP^2(m)$.
Let $Q_k(\alpha|\beta)$
 be the $F[(N_k/P_k)\wr \sym{m}]$-module defined
by
\[ Q_k(\alpha|\beta) = \Res^{\sym{k}\wr\sym{m}}_{N_k\wr\sym{m}} R_k(\alpha|\beta) \]
considered as an $F[(N_k/P_k)\wr \sym{m}]$-module via
the canonical surjection $N_k \wr \sym{m} \rightarrow
(N_k \wr \sym{m}) / (P_k)^m \cong (N_k / P_k) \wr \sym{m}$.
\end{defn}

%
%

Again if $k=1$ we identify $N_1 \wr \sym{m}$ with $\sym{m}$ and
we have $Q_1(\alpha|\beta) = R_1(\alpha|\beta)$.
Since $(\alt{k})^m$ acts trivially on $R_k(\alpha|\beta)$ we see that
$(N_{\alt{k}}(P_k)/P_k)^m$ acts trivially on $Q_k(\alpha|\beta)$.
It is clear that
\begin{equation}\label{eq:Qk}
Q_k(\alpha|\beta)\cong \Ind_{N_k\wr(\sym{m_1}\times \sym{m_2})}^{N_k\wr\sym{m}}
\bigl(\Inf^{N_k\wr\sym{m_1}}_{\sym{m_1}}(\Palpha)\boxtimes (\Inf^{N_k\wr\sym{m_2}}_{\sym{m_2}}
(\Pbeta)\otimes \widehat{\sgn(N_k)}^{\otimes m_2})\bigr),
\end{equation}
again regarded as an $F[(N_k/ P_k) \wr \sym{m}]$-module by this canonical surjection.


\begin{defn}\label{def:Qbark}
Let $k \in \N$, let $m \in \N_0$, and let $(\alpha|\beta) \in \RP^2(m)$.
For $k\ge 2$, let $\overline{Q}_k(\alpha|\beta)$ be the $F[C_2\wr\sym{m}]$-module
obtained from $Q_k(\alpha|\beta)$ via the canonical surjection
\[ (N_k/P_k) \wr \sym{m} \rightarrow ((N_k/P_k)\wr \sym{m})/(N_{\alt{k}}(P_k)/P_k)^m\cong C_2\wr \sym{m}. \]
We define the  $F[C_2\wr\sym{m}]$-module $\overline{Q}_1(\alpha|\beta)$ by
\[ \overline{Q}_1(\alpha|\beta)=\Inf^{C_2\wr\sym{m}}_{\sym{m}}Q_1(\alpha|\beta).\]
\end{defn}

The following lemma justifies the notation $\overline{Q}_k(\alpha|\beta)$ for
the projective modules just defined.

\begin{lem}\label{L:barQk} Let $k\geq 2$ and let $(\alpha|\beta)\in\RP^2(m)$ where $m\in\N_0$.
Then
 \[\overline{Q}_k(\alpha|\beta)\cong \overline{Q}(\alpha|\beta)
 \cong R_2(\alpha|\beta)\] as $F[C_2\wr\sym{m}]$-modules.
\end{lem}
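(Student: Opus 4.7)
The plan is to mimic the short proof of Lemma \ref{L:barWk}. The key observation is that both claimed isomorphisms ultimately come down to identifying certain one-dimensional sign characters, once the various inductions and inflations have been stripped away.

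First I would establish the isomorphism $\overline{Q}(\alpha|\beta) \cong R_2(\alpha|\beta)$ directly from the definitions. Since $p$ is odd, we have $P_2 = \{e\}$, $N_2 = \sym{2}$, and $N_{\alt{2}}(P_2)/P_2$ is trivial; hence the canonical surjections in Definitions~\ref{def:Qk} and~\ref{def:Qbark} collapse to the identification $\sym{2}\cong C_2$, and $\overline{Q}_2(\alpha|\beta) = R_2(\alpha|\beta)$ as $F[C_2\wr\sym{m}]$-modules. Comparing Definition~\ref{def:Rk} (with $k=2$) to Definition~\ref{def:Qbar}, so that the $\widehat{\sgn(2)}^{\otimes m_2}$ factor in $R_2$ matches the corresponding factor in the bifunctor $\mathcal{F}$, we read off $R_2(\alpha|\beta) = \mathcal{F}(\Palpha|\Pbeta) = \overline{Q}(\alpha|\beta)$.

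The remaining task is to show $\overline{Q}_k(\alpha|\beta) \cong \overline{Q}(\alpha|\beta)$ as $F[C_2\wr\sym{m}]$-modules for every $k\ge 2$. Starting from the formula~\eqref{eq:Qk} for $Q_k(\alpha|\beta)$, and applying the quotient $(N_k/P_k)\wr\sym{m} \to C_2\wr\sym{m}$ from Definition~\ref{def:Qbark}, I would use the standard compatibility of induction with inflation along normal subgroups to push the quotient through the induction. The tensor factors $\Inf^{N_k\wr\sym{m_i}}_{\sym{m_i}}(\Palpha)$ and $\Inf^{N_k\wr\sym{m_2}}_{\sym{m_2}}(\Pbeta)$, being pulled back from $\sym{m_i}$, descend to the corresponding inflations on $C_2\wr\sym{m_i}$ independently of~$k$. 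The proof thus reduces to showing
\[ \widehat{\sgn(N_k)}^{\otimes m_2} \cong \widehat{\sgn(2)}^{\otimes m_2} \]
as $F[C_2\wr\sym{m_2}]$-modules, where on the left we regard $\sgn(N_k)$ as an $FC_2$-module via $N_k \to N_k/N_{\alt{k}}(P_k)\cong C_2$.

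This last reduction is the crux, but it is easy: $\sgn(N_k)$ is by definition the restriction of the sign character of $\sym{k}$ to $N_k$, whose kernel is $N_k\cap\alt{k} = N_{\alt{k}}(P_k)$, so $\sgn(N_k)$ is precisely the nontrivial character of $N_k/N_{\alt{k}}(P_k) \cong C_2$, and hence coincides with $\sgn(2)$ under this identification. The only genuine piece of bookkeeping is verifying that the induction and the canonical surjection commute as claimed, and I do not anticipate any substantive obstacle here.
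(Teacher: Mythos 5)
Your proof is correct and follows essentially the same route as the paper's one-sentence argument, which defers the key step to Lemma \ref{L:barWk}: everything reduces to identifying $\widehat{\sgn(N_k)}^{\otimes m_2}$ with $\widehat{\sgn(2)}^{\otimes m_2}$ under the canonical surjection, with $\overline{Q}(\alpha|\beta) = R_2(\alpha|\beta)$ being immediate from the definitions once $\sym{2}$ is identified with $C_2$. Your added observation that $P_2 = \{e\}$ and $N_2 = \sym{2}$ (so $\overline{Q}_2(\alpha|\beta) = R_2(\alpha|\beta)$ on the nose) is a nice touch that makes the $k=2$ case of the chain transparent.
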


\begin{proof}
The first isomorphism is clear from the
definitions and the second follows as in Lemma~\ref{L:barWk}.
\end{proof}

We pause to give a small example showing the
exceptional behaviour when $k=1$.

\begin{eg}\label{eg:projectives}
Let $p=3$, and let $k \ge 2$. Let $\varepsilon = \Inf_{\sym{3}}^{\sym{k} \wr \sym{3}} (\sgn(3))$.
There are four mutually non-isomorphic $1$-dimensional simple $F[\sym{k} \wr \sym{3}]$-modules, namely
\[ \widehat{F(k)}^{\otimes 3}, \
   \widehat{\sgn(k)}^{\otimes 3}, \
   \widehat{F(k)}^{\otimes 3} \!\otimes \varepsilon, \
   \widehat{\sgn(k)}^{\otimes 3} \!\otimes \varepsilon, \]
where the trivial module appears as $\widehat{F(k)}^{\otimes 3} \! \otimes \varepsilon$.
The projective covers of these modules are $R_k((1,1,1)|\varnothing)$, $R_k(\varnothing|(2,1))$,
$R_k((2,1) | \varnothing)$, $R_k( \varnothing | (1,1,1))$, respectively. Quotienting
out by the trivial action of $\alt{k}$, the
corresponding modules $\overline{Q}( \alpha | \beta)$ for $F[C_2 \wr \sym{3}]$
are precisely the projective covers of the four one-dimensional simple modules
for $F[C_2 \wr \sym{3}]$.
The four remaining simple modules for $F[C_2 \wr \sym{3}]$,
each projective; by Lemma~\ref{lem:Qcover}, they are
isomorphic to the modules $\overline{Q}(\alpha|\beta)$ where both $\alpha$ and $\beta$ are non-empty.
By contrast, when $k=1$, identifying $\sym{1} \wr \sym{3}$ with $\sym{3}$ as described after
Definition~\ref{def:Rk}, we have $\overline{Q}_1((1,1,1) | \varnothing) \cong
\overline{Q}_1(\varnothing |(2,1)) \cong P^{(1,1,1)} \cong M^{(2,1)} \otimes \sgn$ and
$\overline{Q}_1((2,1) | \varnothing) \cong \overline{Q}_1(\varnothing|(1,1,1)) \cong P^{(2,1)}
\cong M^{(2,1)}$.
\end{eg}

We are finally ready to define signed Young modules.

\begin{defn}\label{D:signedYoung}
Let $(\lambda|p\mu) \in \RP^2(n)$. Let $\lambda = \sum_{i\ge 0} p^i\lambda(i)$ and
$\mu = \sum_{i \ge 0} p^i\mu(i)$ be the $p$-adic expansions of $\lambda$ and $\mu$, as defined
in~\eqref{eq:padic}. Let $n_0 = |\lambda(0)|$ and let
$n_i = |\lambda(i)| + |\mu(i-1)|$ for each $i \in \N$. Let $r$ be maximal such that $n_r \not= 0$
and let $\rho = (1^{n_0},p^{n_1},\ldots, (p^r)^{n_r})$.
We define the \emph{signed Young module} $Y(\lambda|p\mu)$ to be
the unique (up to isomorphism) $F\sym{n}$-module $V$ such that
\[ V(P_\rho) \cong Q_1\bigl(\lambda(0)|\varnothing\bigr) \boxtimes Q_p\bigl(\lambda(1)|\mu(0)\bigr)
\boxtimes \cdots
\boxtimes Q_{p^r}\bigl(\lambda(r)|\mu(r-1)\bigr). \]
We define a \emph{Young module} to be a signed Young module
of the form $Y(\lambda|\varnothing)$.
\end{defn}

The isomorphism above is an isomorphism of projective
$F[\sym{n_0} \times (N_p/P_p) \wr \sym{n_1} \times \cdots \times (N_{p^r}/P_{p^r}) \wr \sym{n_r}]$-modules.
Observe that $P_\rho$ is trivial if and only if $\lambda$ is $p$-restricted and $\mu = \varnothing$;
in this case $Q_1(\lambda(0)|\varnothing)$ is regarded as a $\sym{n}$-module by identifying
$N_1 \wr \sym{n}$ with $\sym{n}$, and since $\lambda = \lambda(0)$
we have $Y(\lambda|\varnothing) = Q_1(\lambda|\varnothing) = \Plambda$.

The following proposition gives part of Theorem~\ref{T:Donkin}(i).

\begin{prop}{\ }
\label{P:summands}
\begin{thmlist}
\item If $(\alpha|\beta) \in \P^2(n)$ then $M(\alpha|\beta)$ is a direct sum of signed
Young modules~$Y(\lambda|p\mu)$.
\item If $\alpha \in \P(n)$ then $M^\alpha$ is a direct sum of Young
modules $Y(\lambda|\varnothing)$.
\end{thmlist}
\end{prop}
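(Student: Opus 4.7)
The plan is to trace an indecomposable summand~$V$ of $M(\alpha|\beta)$ through the Brou{\'e} correspondence. Proposition~\ref{P:vertices} fixes the vertex of $V$ as some $P_\rho$ with $\rho=(1^{n_0},p^{n_1},\ldots,(p^r)^{n_r})$; Proposition~\ref{P:Broueperm} computes $M(\alpha|\beta)(P_\rho)$; and Proposition~\ref{P:Wsummands}, together with the passage from $\overline{W}_{p^i}$ to $W_{p^i}$ by inflation across a kernel of order coprime to $p$, identifies the projective summands of that Brauer quotient. Matching the result against Definition~\ref{D:signedYoung} will yield $V\cong Y(\lambda|p\mu)$, and part~(ii) will follow as the specialisation $\beta=\varnothing$.

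By Lemma~\ref{L:signpermbasis}(ii) the module $M(\alpha|\beta)$ is a $p$-permutation module; an indecomposable summand~$V$ then has vertex $P_\rho$ of the stated form by Proposition~\ref{P:vertices}, and by Theorem~\ref{BC1} is determined up to isomorphism by the projective indecomposable summand $V(P_\rho)$ of the $F[N_\rho/P_\rho]$-module $M(\alpha|\beta)(P_\rho)$. Proposition~\ref{P:Broueperm} expresses this Brauer quotient as a direct sum, indexed by $\Lambda((\alpha|\beta),\rho)$, of outer tensor products
\[
W_1(\mbf{\gamma}_0|\mbf{\delta}_0)\boxtimes W_p(\mbf{\gamma}_1|\mbf{\delta}_1)\boxtimes\cdots\boxtimes W_{p^r}(\mbf{\gamma}_r|\mbf{\delta}_r),
\]
so $V(P_\rho)$ is a projective summand of exactly one of these. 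Since projective indecomposable modules for a direct product are outer tensor products of projective indecomposables for the factors, $V(P_\rho)$ decomposes as $\largeboxtimes_{i=0}^r Q_{p^i}(\lambda(i)|\mu(i-1))$ for some projective indecomposable summand $Q_{p^i}(\lambda(i)|\mu(i-1))$ of each $W_{p^i}(\mbf{\gamma}_i|\mbf{\delta}_i)$.

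For each $i\ge 1$, Proposition~\ref{P:Wsummands} identifies the projective summands of $\overline{W}_{p^i}(\mbf{\gamma}_i|\mbf{\delta}_i)$ as $\overline{Q}(\lambda(i)|\mu(i-1))$ with $(\lambda(i)|\mu(i-1))\in\RP^2(n_i)$; since the kernel $(N_{\alt{p^i}}(P_{p^i})/P_{p^i})^{n_i}$ of the surjection $(N_{p^i}/P_{p^i})\wr\sym{n_i}\twoheadrightarrow C_2\wr\sym{n_i}$ in Definition~\ref{D:VW}(iii) has order coprime to $p$, inflation is a bijection on projective indecomposables and sends $\overline{Q}(\lambda(i)|\mu(i-1))$ to $Q_{p^i}(\lambda(i)|\mu(i-1))$ by Lemma~\ref{L:barQk}. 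For the remaining factor $W_1(\mbf{\gamma}_0|\mbf{\delta}_0)=M(\mbf{\gamma}_0|\mbf{\delta}_0)$, the module $\overline{W}_1$ is inflated from $\sym{n_0}$ to $C_2\wr\sym{n_0}$, so any projective $F[C_2\wr\sym{n_0}]$-summand has trivial $C_2^{n_0}$-action and must be of the form $\overline{Q}(\lambda(0)|\varnothing)$ with $|\lambda(0)|=n_0$; this forces $\mu(-1)=\varnothing$ in accordance with the convention of Theorem~\ref{T:Donkin}(iii). Setting $\lambda=\sum_{i\ge 0}p^i\lambda(i)$ and $\mu=\sum_{i\ge 0}p^i\mu(i)$, the $p$-restrictedness of the parts makes these the $p$-adic expansions, and comparison with Definition~\ref{D:signedYoung} gives $V\cong Y(\lambda|p\mu)$, proving~(i). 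For~(ii), if $\beta=\varnothing$ then $\mbf{\delta}_i=\varnothing$ for all~$i$ in every index of $\Lambda((\alpha|\varnothing),\rho)$, forcing $\mu(i-1)=\varnothing$ throughout and leaving only Young modules $Y(\lambda|\varnothing)$.

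The main obstacle I anticipate is the uniform handling of the $i=0$ factor: for $k\ge 2$ the module $\overline{W}_k$ is a quotient of $W_k$ by a kernel of order coprime to $p$, whereas $\overline{W}_1$ is an inflation, so the triviality of the base-group action on $\overline{W}_1$ (which is what ultimately enforces $\mu(-1)=\varnothing$) must be verified directly rather than by appeal to Proposition~\ref{P:Wsummands} in the uniform form stated there.
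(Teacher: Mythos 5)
Your proof is correct and follows the same route as the paper's: vertex identification via Proposition~\ref{P:vertices}, the Brauer quotient decomposition of Proposition~\ref{P:Broueperm}, identification of projective indecomposable summands of the wreath-product factors, and then the Brou\'e correspondence (Theorem~\ref{BC1}) to pull back to a signed Young module as in Definition~\ref{D:signedYoung}. Where the paper cites Lemma~\ref{lem:Qcover} and Lemma~\ref{L:barQk} directly, you route through Proposition~\ref{P:Wsummands} (which packages Lemma~\ref{lem:Qcover}) and the coprime-order-kernel argument for the inflation from $C_2\wr\sym{n_i}$ up to $(N_{p^i}/P_{p^i})\wr\sym{n_i}$; this is the same content, slightly repackaged. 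You are also right to be wary of the $i=0$ factor: Proposition~\ref{P:Wsummands} is proved by analysing the base-group action induced by $\widehat{\sgn(2)}^{\otimes m_2}$, which is specific to $k\ge 2$, while $\overline{W}_1$ is an inflation with trivial base-group action, so its projective summands satisfy $|\alpha|=n_0$ and $\beta=\varnothing$ rather than condition (i) of that proposition. Your direct argument for $i=0$ (trivial $C_2^{n_0}$-action forces $\overline{Q}(\lambda(0)|\varnothing)$, hence $Q_1(\lambda(0)|\varnothing)=P^{\lambda(0)}$) is correct, though the paper simply observes that $W_1(\mbf{\gamma}_0|\mbf{\delta}_0)\cong M(\mbf{\gamma}_0|\mbf{\delta}_0)$ is an $F\sym{n_0}$-module whose projective indecomposable summands are the $P^{\lambda(0)}$ by ordinary symmetric group theory, which is a shade more direct than your detour through $C_2\wr\sym{n_0}$.
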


\begin{proof}
Let $(\alpha|\beta) \in \P^2(n)$ and let $V$ be an indecomposable summand of $M(\alpha|\beta)$.
By Proposition~\ref{P:vertices}
there exists $\rho = (1^{m_0},p^{m_1},\ldots, (p^r)^{m_r}) \in \C(n)$ such that $P_\rho$ is a
vertex of~$V$. 
Recall that
$N_\rho/P_\rho \cong \sym{n_0}\times ((N_p/P_p)\wr\sym{n_1})\times\cdots\times ((N_{p^r}/P_{p^r})\wr\sym{n_r})$.
By Proposition~\ref{P:Broueperm}, there
exists $(\mbf{\gamma}|\mbf{\delta}) \in \LambdaABR$
such that  the projective $F[N_\rho/P_\rho]$-module $V(P_\rho)$
is a direct summand of
\[ W_1(\mbf{\gamma}_0|\mbf{\delta}_0)\boxtimes W_p(\mbf{\gamma}_1|\mbf{\delta}_1)\boxtimes \cdots\boxtimes W_{p^r}(\mbf{\gamma}_r|\mbf{\delta}_r). \]
By Lemmas~\ref{lem:Qcover} and~\ref{L:barQk}
there exist partitions $\lambda(0), \ldots, \lambda(r)$ and $\mu(0), \ldots, \mu(r-1)$ such that
\[ V(P_\rho) = Q_1\bigl(\lambda(0)\bigr) \boxtimes Q_p\bigl(\lambda(1) | \mu(0)\bigr) \boxtimes
\cdots \boxtimes Q_{p^r}\bigl(\lambda(r) | \mu(r-1)\bigr). \]
By Theorem~\ref{BC1}, $V \cong Y(\lambda|p\mu)$ where $\lambda = \sum_{i=0}^r p^i \lambda(i)$ and
$\mu = \sum_{i=0}^{r-1} p^i \mu(i)$. This proves part (i).
For part (ii), observe that if $\beta = \varnothing$ then we have $\mbf{\delta}_i = \varnothing$ for each $i$,
and so $\mu(i) = \varnothing$ for each $i$.
\end{proof}

\subsection{Column symmetrization of $(\alpha|\beta)$-tabloids}

To deal with the projective summands of signed Young permutation modules we require the following corollary
of the key lemma used by James to prove his Submodule Theorem in \cite{james1978representation}.
Given a tableau $\t$ with entries from a set $\mathcal{O}$,
let $C_\t \le \sym{\mathcal{O}}$ be the group of permutations which fix the columns of $\t$
setwise. Set $\kappa_\t = \sum_{g \in C_\t} \sgn(g) g$.

\begin{prop}\label{P:Umodules}
Let $\lambda \in \P(n)$ and let $\t$ be a $\lambda$-tableau.
In any direct sum decomposition of $M^\lambda$ into
indecomposable modules there is a unique summand $U^\lambda$ such that $\kappa_\t U^\lambda \not= 0$.
Moreover if $\alpha \in \P(n)$ then $\kappa_\t U^\alpha = 0$ unless $\lambda \unrhd \alpha$.
\end{prop}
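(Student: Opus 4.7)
The plan is to invoke the key lemma of James used in his proof of the Submodule Theorem (Lemma~4.6 of \cite{james1978representation}): for a $\lambda$-tableau $\t$ and an $\alpha$-tabloid $\{\mathrm{s}\}$, either two entries of some column of $\t$ lie in the same row of $\{\mathrm{s}\}$, or no such pair exists, in which case $\lambda\unrhd\alpha$. In the first case, the transposition $h$ swapping these two entries lies in $C_\t$ and fixes $\{\mathrm{s}\}$, so combining with the identity $h\kappa_\t=\sgn(h)\kappa_\t$ gives $\kappa_\t\{\mathrm{s}\}=-\kappa_\t\{\mathrm{s}\}$, hence $\kappa_\t\{\mathrm{s}\}=0$ because $p$ is odd. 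Isolating this precise form of the key lemma is really the main conceptual step; everything else is a short deduction.

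Given this, the second assertion of the proposition is immediate. If $\lambda\not\unrhd\alpha$, then only the first alternative of the key lemma can occur, so $\kappa_\t$ annihilates every $\alpha$-tabloid and hence the whole module $M^\alpha$; since $U^\alpha\subseteq M^\alpha$ we get $\kappa_\t U^\alpha=0$.

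For the first assertion I would specialize the key lemma to $\alpha=\lambda$. Whenever the second alternative occurs, the $\lambda'_j$ entries of column $j$ of $\t$ must fall, one each, into the $\lambda'_j$ rows of $\{\mathrm{s}\}$ of length at least $j$; this determines a unique $g\in C_\t$ with $\{\mathrm{s}\}=\{g\cdot\t\}$, and hence
\[
\kappa_\t\{\mathrm{s}\}=\kappa_\t g\{\t\}=\sgn(g)\,e_\t,
\]
where we write $e_\t=\kappa_\t\{\t\}$. Since $R(\t)\cap C_\t=\{1\}$, the tabloids $\{g\cdot\t\}$ for $g\in C_\t$ are pairwise distinct, so $e_\t\neq 0$ and $\kappa_\t M^\lambda$ equals the one-dimensional subspace $Fe_\t$. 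For any decomposition $M^\lambda=\bigoplus_i U_i$ into indecomposables, the inclusions $\kappa_\t U_i\subseteq U_i$ make
\[
Fe_\t \;=\; \kappa_\t M^\lambda \;=\; \bigoplus_i \kappa_\t U_i
\]
a direct sum inside $M^\lambda$, and being one-dimensional it forces exactly one summand $\kappa_\t U_i$ to be nonzero. This distinguished summand is the required $U^\lambda$.
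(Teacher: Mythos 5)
Your proposal is correct and takes the same route as the paper. The paper simply asserts that the proposition ``follows immediately from [James, Lemma 4.6]'', and your proof spells out exactly why: the key lemma gives the dichotomy, the characteristic being odd yields $\kappa_\t\{\mathrm{s}\}=0$ in the first alternative, the second alternative (with $\alpha=\lambda$) shows $\kappa_\t M^\lambda = Fe_\t$ is one-dimensional, and the direct-sum decomposition of this one-dimensional space forces a unique nonzero summand. This is the intended argument, written out in full.
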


\begin{proof}
This follows immediately from \cite[Lemma 4.6]{james1978representation}.
\end{proof}

By the Krull--Schmidt Theorem, the $U^\lambda$  are well-defined  up to isomorphism. It is clear that
$U^\alpha \cong U^\beta$ if and only if $\alpha = \beta$.

We also need the following generalization of part of James' lemma.

\begin{lem}\label{lem:columnSym}
Let $(\alpha|\beta) \in \C^2(n)$ and let $\T = (\T_+|\T_-)$ be an $(\alpha|\beta)$-tableau.
Let $\lambda \in \P(n)$ and let $\t$ be a $\lambda$-tableau.
If $\kappa_t \{\T\} \not= 0$ then $(\lambda | \varnothing) \unrhd (\alpha|\beta)$.
\end{lem}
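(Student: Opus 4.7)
The plan is to adapt James' argument for \cite[Lemma 4.6]{james1978representation} to the signed setting. The key step will be to establish the following \emph{column condition}: whenever $\kappa_\t\{\T\}\neq 0$, no two entries in the same column of $\t$ lie in the same row of $\T_+$. To prove it, suppose $i,j$ form such a pair. Then $(i,j)\in C_\t$ and, by the tabloid relations recorded after Definition~\ref{def:tabloid}, $(i,j)\{\T\} = \{\T\}$. Decomposing $C_\t$ as a disjoint union of left cosets of $\langle (i,j)\rangle$ and writing $\kappa_\t = \bigl(\sum_k \sgn(g_k) g_k\bigr)\bigl(1-(i,j)\bigr)$, we obtain $\kappa_\t\{\T\} = 0$, a contradiction.

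Given the column condition, I will deduce the dominance inequality $(\lambda|\varnothing) \unrhd (\alpha|\beta)$ by a counting argument. Condition~(b) of the dominance order is automatic, since $|\lambda| = n \geq |\alpha| + \sum_{i=1}^k \beta_i$ for every $k$. For condition~(a), fix $k$ and let $E_k$ denote the set of $\alpha_1 + \cdots + \alpha_k$ entries of $\T$ lying in the first $k$ rows of $\T_+$. The column condition forces each column of $\t$ to contain at most $k$ elements of $E_k$, so summing over columns of $\t$ yields
\[
|E_k| \,\leq\, \sum_{j\geq 1}\min(k,\lambda'_j) \,=\, \lambda_1+\cdots+\lambda_k,
\]
giving~(a) for $k\leq\ell(\alpha)$; the range $k>\ell(\alpha)$ follows from the $k=\ell(\alpha)$ case together with $\lambda_i\geq 0$.

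The main conceptual obstacle is that the $\T_-$ entries contribute nothing to this column argument: if $i,j$ are in the same column of $\t$ and the same row of $\T_-$, then $(i,j)\{\T\} = -\{\T\}$, and the analogous coset computation gives $\kappa_\t\{\T\} = 2\sum_k\sgn(g_k) g_k\{\T\}$, which need not vanish. Fortunately, the desired conclusion $(\lambda|\varnothing) \unrhd (\alpha|\beta)$ imposes no nontrivial conditions on the $\beta$-part, so this loss of information is precisely what is permitted, and the proof goes through.
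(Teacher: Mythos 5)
Your proof is correct, and it is a more direct version of the argument in the paper. Both proofs hinge on the same coset factorization of $\kappa_\t$ through a transposition $(i,j) \in C_\t$ whose entries lie in the same row of $\T_+$: since $(i,j)\{\T\} = \{\T\}$, the factorization $\kappa_\t = \bigl(\sum_k \sgn(g_k) g_k\bigr)\bigl(1-(i,j)\bigr)$ forces $\kappa_\t\{\T\}=0$. What the paper does differently is to first build an auxiliary tableau $\t^\star$ of shape $\nu$, where $\nu'$ records the sizes of the orbits of $H=\sym{\mathcal{O}}\cap C_\t$ on the set $\mathcal{O}$ of entries of $\T_+$; this makes $C_{\t^\star}$ fix $\T_-$ entirely, so one can pass from $\kappa_{\t^\star}\{\T\}\neq 0$ to $\kappa_{\t^\star}\{\T_+\}\neq 0$ and invoke the argument of \cite[Lemma 4.6]{james1978representation} directly to get $\nu\unrhd\alpha$, and then uses the fact that $\nu$ is a subpartition of $\lambda$ to conclude. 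You instead establish the column condition for $\t$ itself and then supply the counting argument $|E_k|\le\sum_j\min(k,\lambda'_j)=\lambda_1+\cdots+\lambda_k$ explicitly, bypassing $\t^\star$ and $\nu$ entirely. Your route is shorter and self-contained (it inlines the James argument rather than citing it), while the paper's route is structured so that \cite[Lemma 4.6]{james1978representation} can be quoted as a black box after reducing to the unsigned setting; both are valid, and your observation that the $\T_-$ entries are precisely what the column argument fails to control, and that this is harmless because condition (b) of the dominance order is vacuous here, correctly pinpoints the structural reason the signed statement takes this asymmetric form.
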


\begin{proof}
Let $m_1=|\alpha_1|$. Let $\mathcal{O}$ be the set of entries of $\T_+$. Let $H = \sym{\mathcal{O}} \cap \sym{\lambda'}$
and let $\mathcal{O}_1, \ldots, \mathcal{O}_s$ be the orbits of $H$ on $\mathcal{O}$,
ordered so that $|\mathcal{O}_1| \ge \ldots \ge |\mathcal{O}_s|$.
Let $\nu = (|\mathcal{O}_1|,\ldots,|\mathcal{O}_s|)' \in \P(m_1)$.
The $j$th largest orbit of $H$ has size at most $\lambda_j'$. Therefore $\nu_j'
\le \lambda_j'$ for each~$j \in \{1,\ldots,s\}$,
and so~$\nu$ is a subpartition of $\lambda$. It immediately follows that
\begin{equation}
\label{eq:lambdanu}
\sum_{i=1}^k \lambda_i \ge \sum_{i=1}^k \nu_i
\end{equation}
for all $k \in \N$. (By our standing convention, $\nu_i = 0$ if $i > \ell(\nu)$).

Let $\t^\star$ be a $\nu$-tableau having the entries of $\mathcal{O}_j$ in its $j$th column.
Observe that $C_{\t^\star} \le C_\t$. Choose $g_1,\ldots, g_s \in C_\t$ such
that $C_\t = g_1 C_{\t^\star}  \cup \ldots \cup g_sC_{\t^\star}$
where the union is disjoint. We have
\[ \kappa_{\t} = (\sgn(g_1) g_1 + \cdots + \sgn(g_s) g_s)\kappa_{\t^\star} . \]
Since $\kappa_\t \{\T\} \not= 0$ we have $\kappa_{\t^\star} \{\T\} \not= 0$.
Since $C_{\t^\star}$ fixes the entries in $\T_-$, it follows that
$\kappa_{\t^\star} \{\T_+\} \not= 0$. The argument used to prove Lemma 4.6 in \cite{james1978representation}
now shows that any two entries in the same row of $\{\T_+\}$ lie in different columns of $\t^\star$,
and so $\nu \unrhd \alpha$. Hence, by~\eqref{eq:lambdanu}, we have
$(\lambda |\varnothing) \unrhd (\alpha|\beta)$, as required.
\end{proof}

\subsection{Proof of Theorem~\ref{T:Donkin}}

For convenience we repeat the statement of this theorem below.

\setcounter{section}{1}
\setcounter{thm}{0}
\begin{thm}[Donkin \protect{\cite{SDonkin}}]
\DonkinText
\end{thm}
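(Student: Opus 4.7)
The plan is to establish the three parts using the Brou{\'e} correspondence (Theorem~\ref{BC1}) together with Proposition~\ref{P:Broueperm}. Part~(iii) is essentially immediate from Definition~\ref{D:signedYoung}: the $F[N_\rho/P_\rho]$-module $Y(\lambda|p\mu)(P_\rho) \cong Q_1(\lambda(0)|\varnothing)\boxtimes Q_p(\lambda(1)|\mu(0)) \boxtimes \cdots \boxtimes Q_{p^r}(\lambda(r)|\mu(r-1))$ is a nonzero projective, as an outer tensor product of projective modules for the factor groups, so $P_\rho$ is a vertex of $Y(\lambda|p\mu)$ by Theorem~\ref{BC1}(i).

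For parts (i) and (ii), the starting point is the identity (derived from Theorem~\ref{BC1}(ii), Proposition~\ref{P:Broueperm}, and Krull--Schmidt for outer tensor products)
\[ \bigl[M(\alpha|\beta):Y(\lambda|p\mu)\bigr] = \sum_{(\mbf{\gamma}|\mbf{\delta}) \in \LambdaABR} \prod_{i=0}^r \bigl[W_{p^i}(\mbf{\gamma}_i|\mbf{\delta}_i) : Q_{p^i}(\lambda(i)|\mu(i-1))\bigr]. \]
Each factor on the right is controlled by Proposition~\ref{P:Wsummands}: for $i \ge 1$, after using Lemma~\ref{L:barQk} to pass from $Q_{p^i}$-summands of $W_{p^i}$ to $\overline{Q}$-summands of $\overline{W}_{p^i}$, a nonzero contribution forces $|\lambda(i)| = |\mbf{\gamma}_i|$, $|\mu(i-1)| = |\mbf{\delta}_i|$, $\lambda(i) \unrhd \overline{\mbf{\gamma}_i}$, and $\mu(i-1) \unrhd \overline{\mbf{\delta}_i}$. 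For $i = 0$ the module $W_1(\mbf{\gamma}_0|\mbf{\delta}_0) = M(\mbf{\gamma}_0|\mbf{\delta}_0)$ is itself an $F\sym{n_0}$-module, and the analogous statement $\lambda(0) \unrhd \mbf{\gamma}_0$ (with $|\mbf{\gamma}_0| + |\mbf{\delta}_0| = |\lambda(0)|$) follows from a Frobenius-reciprocity and Littlewood--Richardson calculation on the ordinary character of $M(\mbf{\gamma}_0|\mbf{\delta}_0)$.

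For \emph{part (i)}, condition~(a) of $(\lambda|p\mu) \unrhd (\alpha|\beta)$ is then immediate from $\lambda = \sum p^i\lambda(i)$ and $\alpha = \sum p^i\mbf{\gamma}_i$ together with the termwise partition dominances; for condition~(b), the size constraints yield $|\lambda| - |\alpha| = |\mbf{\delta}_0|$, and the required inequality reduces to
\[ \sum_{j > k}(\mbf{\delta}_0)_j \;\ge\; \sum_{i \ge 1} p^i\, \biggl(\sum_{j=1}^k (\mbf{\delta}_i)_j - \sum_{j=1}^k \mu(i-1)_j\biggr), \]
whose left side is non-negative and right side is non-positive by the dominances $\mu(i-1) \unrhd \overline{\mbf{\delta}_i}$. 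For \emph{part (ii)}, specialising $(\alpha|\beta) = (\lambda|p\mu)$ forces $|\mbf{\delta}_0| = |\lambda|-|\alpha| = 0$ and hence $\mbf{\delta}_0 = \varnothing$; the coordinatewise equalities $\sum p^i\mbf{\gamma}_i = \lambda$, $\sum p^i\mbf{\delta}_i = p\mu$ then together with the size and dominance constraints pin down $\mbf{\gamma}_i = \lambda(i)$ for all $i$ and $\mbf{\delta}_i = \mu(i-1)$ for $i \ge 1$, and for this unique choice each factor equals~$1$ (by Brauer reciprocity and the fact that $\chi^{\lambda(i)}$ appears in $M^{\lambda(i)}$ with multiplicity one, by Young's rule). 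The main technical obstacle is the asymmetry between the $k = 1$ and $k \ge 2$ cases in Proposition~\ref{P:Wsummands} and Lemma~\ref{L:barQk}, which forces the separate (though elementary) character-theoretic treatment of the $i = 0$ factor.
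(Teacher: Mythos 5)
Your outline is close in spirit to the paper's — both proofs run through the Brou\'e correspondence, Proposition~\ref{P:Broueperm}, and the Klyachko-type multiplicity formula — but you replace the paper's two devices (induction on $n$ for the non-projective case; James' Submodule Theorem machinery, i.e.\ Proposition~\ref{P:Umodules} and Lemma~\ref{lem:columnSym}, plus a counting argument, for the projective case) with a direct ordinary-character/Littlewood--Richardson argument for the $i=0$ factor. That substitution is a clean idea and does deliver the dominance inequalities needed for part~(i): the character of $P^{\lambda(0)}$ contains $\chi^{\lambda(0)}$ exactly once by the triangularity of the decomposition matrix and Brauer reciprocity, and requiring $\chi^{\lambda(0)}$ to occur in the ordinary character of $M(\mbf{\gamma}_0|\mbf{\delta}_0)$ does force $(\lambda(0)|\varnothing)\unrhd(\mbf{\gamma}_0|\mbf{\delta}_0)$.

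The genuine gap is in part~(ii). Your character argument bounds the single surviving product from \emph{above} by~$1$ (since $\chi^{\lambda(i)}$ and $\chi^{(\lambda(i)|\mu(i-1))}$ occur with multiplicity $1$ in the relevant permutation characters), but it gives no \emph{lower} bound, and you assert without justification that each factor $\bigl[W_{p^i}(\lambda(i)|\mu(i-1)):Q_{p^i}(\lambda(i)|\mu(i-1))\bigr]$ equals~$1$ rather than $\le 1$. Establishing that $Q_{p^i}(\lambda(i)|\mu(i-1))$ actually occurs as a summand of $W_{p^i}(\lambda(i)|\mu(i-1))$, equivalently that $P^{\lambda(i)}\mid M^{\lambda(i)}$ and $P^{\mu(i-1)}\mid M^{\mu(i-1)}$, is not a character-level fact. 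In particular, when $\lambda$ is $p$-restricted and $\mu=\varnothing$ the Brou\'e correspondence is vacuous ($P_\rho$ is trivial), and your formula degenerates to the tautology $[M(\alpha|\beta):Y(\lambda|\varnothing)]=[M(\alpha|\beta):P^\lambda]$ — so the method gives nothing at all there. This is precisely where the paper invokes Proposition~\ref{P:Umodules} (the Submodule Theorem), Lemma~\ref{lem:columnSym}, and a counting argument over $\RP(n)$, and why it sets up an induction on $n$ so that the $i=0$ factor (or each $i\ge 1$ factor, which lives over a strictly smaller symmetric group) is handled by the inductive hypothesis. Your proposal also does not explicitly establish the first half of part~(i) — that every indecomposable summand of $M(\alpha|\beta)$ \emph{is} one of the $Y(\lambda|p\mu)$ — which requires Proposition~\ref{P:vertices} together with Proposition~\ref{P:Broueperm} (this is Proposition~\ref{P:summands} in the paper). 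Without supplying a lower-bound argument and this existence statement, the proof is incomplete.
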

\setcounter{section}{4}
\setcounter{thm}{6}

We shall prove the theorem by showing that parts (i), (ii) and (iii) of Theorem \ref{T:Donkin} hold when $Y(\lambda|p\mu)$
is as defined in Definition~\ref{D:signedYoung}. In fact part (iii) holds by definition, so
we may concentrate on parts (i) and (ii).

\begin{proof}[Proof of Theorem \ref{T:Donkin}]
We work by induction on $n \in \N_0$. If $n < p$ then $F\sym{n}$ is semisimple
and the modules $Y(\lambda|\varnothing)$
for $\lambda \in \P(n)$ form a complete set of simple $F\sym{n}$-modules.
Hence parts (i) and~(ii) follow from Proposition~\ref{P:dominates}.
Now let $n \ge p$.

We first deal with non-projective summands.
Let $(\lambda | p\mu) \in \P^2(n)$ and
suppose that either $\lambda$ is not $p$-restricted or $\mu \not= \varnothing$.
Let $n_0 = |\lambda(0)|$ and let $n_i = |\lambda(i)| + |\mu(i-1)|$ for $i \in \N$.
Let $\rho = (1^{n_0},p^{n_1},\ldots,(p^r)^{n_r})$.

By Theorem~\ref{BC1} and Proposition~\ref{P:Broueperm},
$[M(\alpha|\beta) : Y(\lambda|p\mu)]$ is equal
to the sum of the products
\[ \bigl[W_1(\mbf{\gamma}_0|\mbf{\delta}_0) : P^{\lambda(0)}\bigr]
\bigl[W_p(\mbf{\gamma}_1|\mbf{\delta}_1) : Q_p\bigl(\lambda(1)|\mu(0)\bigr)\bigr] \cdots
\bigl[W_{p^r}(\mbf{\gamma}_r|\mbf{\delta}_r) : Q_{p^r}\bigl(\lambda(r)|\mu(r-1)\bigr)\bigr]
\]
over all $(\mbf{\gamma}|\mbf{\delta})\in \LambdaABR$.
Suppose the product is non-zero for $(\mbf{\gamma}|\mbf{\delta})\in \LambdaABR$.
Then $P^{\lambda(0)}$ is a direct summand of $W_1(\mbf{\gamma}_0|\mbf{\delta}_0)
\cong M(\mbf{\gamma}_0|\mbf{\delta}_0)$. Since $P^{\lambda(0)} = Y(\lambda(0)|\varnothing)$,
it follows from the inductive hypothesis that
$(\lambda(0)|\varnothing) \unrhd (\mbf{\gamma}_0|\mbf{\delta}_0)$.
Similarly $Q_{p^i}\bigl(\lambda(i)|\mu(i-1)\bigr)$ is a direct summand of $W_{p^i}(\mbf{\gamma}_i | \mbf{\delta}_i)$
for each $i \in \{1,\ldots,r\}$.
By Proposition~\ref{P:dominates}, we have $\lambda(i) \unrhd \mbf{\gamma}_i$
and $\mu(i-1) \unrhd \mbf{\delta}_i$
for each such $i$.
Hence
\begin{align}\label{eq:lambdaineq} \lambda - \lambda(0)  &= \sum_{i=1}^r p^i \lambda(i)
\unrhd \sum_{i=1}^r p^i \mbf{\gamma}_i = \alpha - \mbf{\gamma}_0
\intertext{and}
\label{eq:muineq} p\mu &= \sum_{i=1}^r p^i \mu(i-1) \unrhd \sum_{i=1}^r p^i \mbf{\delta}_i = \beta - \mbf{\delta}_0. \end{align}
Hence $\lambda \unrhd \alpha - \mbf{\gamma}_0 + \lambda(0) \unrhd \alpha$
and
\[ |\lambda| + \sum_{i=1}^j p\mu_i = |\alpha| + |\mbf{\delta}_0| + \sum_{i=1}^j p \mu_i
\ge |\alpha| + |\mbf{\delta}_0| + \sum_{i=1}^j (\beta - \mbf{\delta}_0)_i
\ge |\alpha| + \sum_{i=1}^j \beta_i \]
for all $j \in \N$.
Therefore $(\lambda|p\mu) \unrhd (\alpha|\beta)$.
By Proposition~\ref{P:summands}, every summand of  $M(\alpha|\beta)$ is isomorphic
to some $Y(\lambda|p\mu)$, so this
proves part (i) in the non-projective case.
If $(\alpha|\beta) = (\lambda | p\mu)$
then, by divisibility considerations,
$\mbf{\gamma}_0 = \lambda(0)$ and
$\mbf{\delta}_0 = \varnothing$.
Moreover, equality holds in both~\eqref{eq:lambdaineq}
and~\eqref{eq:muineq}, so we have $\mbf{\gamma}_i = \lambda(i)$ and $\mbf{\delta}_i = \mu(i-1)$
for each $i \in \{1,\ldots, r\}$.
Conversely, if $\mbf{\gamma}$ and $\mbf{\delta}$ are defined in this way, then
the product is $1$. This proves part (ii)
in the non-projective case.

We now deal with the projective summands.
By Proposition~\ref{P:summands}, if $\alpha \in \P(n)$ then~$M^\alpha$
is a direct sum of modules $Y(\lambda|\varnothing)$
for $\lambda \in \P(n)$.
The argument so far shows that
if $\alpha$ is not $p$-restricted then
$Y(\alpha|\varnothing)$ is a summand of $M^\alpha$, and $Y(\alpha|\varnothing)$
is a summand of $M^\gamma$
only if $\alpha \unrhd \gamma$. Therefore, inductively working down the dominance
order on partitions, we see that, for each such $\alpha$,
the submodule $U^\alpha$ in Proposition~\ref{P:Umodules}
is $Y(\alpha|\varnothing)$. By counting, the remaining
$U^\alpha$ for $\alpha \in \RP(n)$ are the modules
$Y(\lambda|\varnothing)$ for $\lambda \in \RP(n)$.
Again working inductively down the dominance order of partitions, it follows from
Proposition~\ref{P:dominates} that $U^\alpha = Y(\alpha|\varnothing)$
for each $\alpha \in \RP(n)$. This proves part (i) in the projective case
when $\beta =\varnothing$, and also proves part (ii) in the projective case.

Finally suppose that $\lambda$ is $p$-restricted and
$Y(\lambda|\varnothing)$ is a direct summand of $M(\alpha|\beta)$.
 Let $\t$ be a $\lambda$-tableau.
By Proposition~\ref{P:summands}, we have
$\kappa_\t M^{(\alpha|\beta)}  \not= 0$. Hence there exists an $(\alpha|\beta)$-tabloid
$\{\T\}$ such that $\kappa_\t\{\T\}\not=0$. By Lemma~\ref{lem:columnSym} we have
$(\lambda | \varnothing) \unrhd (\alpha|\beta)$. This completes the proof of part (i)
in the projective case.
\end{proof}

\section{Applications of Theorem~\ref{T:Donkin}}\label{S:Apps}

\subsection{Equivalent definitions}\label{S:EqvDefs}
We observed in the introduction that since signed Young modules
are characterized by Theorem~\ref{T:Donkin}, our definition
of signed Young modules agrees with Donkin's in \cite{SDonkin}.
Similarly Theorem~\ref{T:Donkin} characterizes the
Young module $Y(\lambda|\varnothing)$
as the unique summand of $M^\lambda$ appearing in $M^\mu$ only if $\lambda \unrhd \mu$.
By \cite[Theorem 3.1(i)]{JamesYoung}, James' Young modules admit the
same characterisation. The two definitions therefore agree.
In \cite{ErdSchr}, Erdmann and Schroll
consider Young modules for finite general linear groups. Adapting their proof
to symmetric groups (this is mentioned as a possibility in \cite{ErdSchr}, as a way to correct
\cite{ErdmannYoung}), their definition of the Young modules uses
the characterization in Proposition~\ref{P:summands}. Our proof of Theorem~\ref{T:Donkin}
shows these definitions agree; of course this also follows from the alternative
characterization just mentioned.

\begin{rem}{\ }
\begin{enumerate}
\item The counting argument used in our proof of the projective case of Theorem~\ref{T:Donkin} is
motivated by similar counting arguments used in \cite{ErdSchr}; the authors of \cite{ErdSchr} thank
Burkhard K{\"u}lshammer for suggesting this approach.

\item We have assumed throughout that $F$ has odd prime characteristic $p$.
It is possible
to construct Young modules when $p=2$ and to prove the analogue
of Theorem~\ref{T:Donkin} by adapting (and simplifying) the approach herein.

\item The analogue of signed Young modules for the finite general linear
group $\GL_n(\mathbb{F}_q)$ are the linear source modules induced from
powers of the determinant representation of parabolic
subgroups of~$\GL_n(\mathbb{F}_q)$. These modules seem worthy of study, especially
given the difficulty of working directly with Specht modules for $\GL_n(\mathbb{F}_q)$.
\end{enumerate}
\end{rem}

\subsection{Klyachko's formula and other applications}\label{S:Apps}
The following corollary generalizes Klyachko's formula to signed Young modules.
It is proved in the first step of our proof
of Theorem~\ref{T:Donkin}; alternatively it follows from this theorem
by taking Brou{\'e} correspondents.

\begin{cor}
\label{cor:Klyachko}
If $(\alpha|\beta)$ and $(\lambda|p\mu) \in \P^2(n)$ then
\[ \begin{split} &
[M(\alpha|\beta) : Y(\lambda|p\mu)] \\ &\qquad =
\!\! \sum_{(\mbf{\gamma}|\mbf{\delta})\in \Lambda((\alpha|\beta), \rho)}
\bigl[W_1(\mbf{\gamma_0}|\mbf{\delta}_0) : Y\bigl(\lambda(0) | \varnothing\bigr)\bigr]
\prod_{i=1}^r
\bigl[W_{p^i}(\mbf{\gamma_i}|\mbf{\delta}_i) : Q_{p^i}\bigl(\lambda(i)|\mu(i-1)\bigr)\bigr].
\end{split}
\]
\end{cor}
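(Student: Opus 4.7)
The plan is to deduce the formula as a direct consequence of the Brou{\'e} correspondence, by comparing Definition~\ref{D:signedYoung} with the decomposition of $M(\alpha|\beta)(P_\rho)$ given in Proposition~\ref{P:Broueperm}. Let $\rho = (1^{n_0}, p^{n_1}, \ldots, (p^r)^{n_r})$ be the partition associated to $(\lambda|p\mu)$ in Definition~\ref{D:signedYoung}. By Theorem~\ref{T:Donkin}(iii), $P_\rho$ is a vertex of $Y(\lambda|p\mu)$, so Theorem~\ref{BC1}(ii) gives
\[ [M(\alpha|\beta) : Y(\lambda|p\mu)] = [M(\alpha|\beta)(P_\rho) : Y(\lambda|p\mu)(P_\rho)], \]
reducing the task to a calculation of multiplicities of modules for $N_\rho/P_\rho$.

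By Definition~\ref{D:signedYoung}, the second module above is the outer tensor product
\[ Q_1\bigl(\lambda(0)|\varnothing\bigr) \boxtimes Q_p\bigl(\lambda(1)|\mu(0)\bigr) \boxtimes \cdots \boxtimes Q_{p^r}\bigl(\lambda(r)|\mu(r-1)\bigr), \]
which is indecomposable because each tensor factor is indecomposable projective for its respective wreath-product quotient (this is Lemma~\ref{lem:Qcover}, translated via Lemma~\ref{L:barQk}). The first module decomposes, by Proposition~\ref{P:Broueperm}, as
\[ M(\alpha|\beta)(P_\rho) \cong \bigoplus_{(\mbf{\gamma}|\mbf{\delta})\in \Lambda((\alpha|\beta),\rho)} W_1(\mbf{\gamma}_0|\mbf{\delta}_0) \boxtimes W_p(\mbf{\gamma}_1|\mbf{\delta}_1) \boxtimes \cdots \boxtimes W_{p^r}(\mbf{\gamma}_r|\mbf{\delta}_r). \]

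The remaining step is the standard observation that, for a direct product $G = G_0 \times \cdots \times G_r$ of finite groups and indecomposable $FG_i$-modules $V_0,\ldots,V_r$, the Krull--Schmidt Theorem gives
\[ [W_0 \boxtimes \cdots \boxtimes W_r : V_0 \boxtimes \cdots \boxtimes V_r] = \prod_{i=0}^r [W_i : V_i] \]
for any $FG_i$-modules $W_0,\ldots,W_r$. Applying this to $N_\rho/P_\rho \cong \sym{n_0} \times (N_p/P_p)\wr \sym{n_1} \times \cdots \times (N_{p^r}/P_{p^r})\wr \sym{n_r}$ within each summand of the decomposition of $M(\alpha|\beta)(P_\rho)$, and then summing over $\Lambda((\alpha|\beta),\rho)$, yields the claimed formula; the $i=0$ factor is rewritten using the identification $Q_1(\lambda(0)|\varnothing) = P^{\lambda(0)} = Y(\lambda(0)|\varnothing)$ that is immediate from Definition~\ref{D:signedYoung} when $P_\rho$ is trivial. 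There is no genuine obstacle here, as the corollary is essentially a repackaging of the computation carried out in the non-projective case of the proof of Theorem~\ref{T:Donkin}.
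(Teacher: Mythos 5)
Your argument is correct and is essentially the same as the paper's: the corollary is extracted from the opening computation in the non-projective case of the proof of Theorem~\ref{T:Donkin} (applying Theorem~\ref{BC1}(ii) with vertex $P_\rho$, then Proposition~\ref{P:Broueperm} and the multiplicativity of Krull--Schmidt multiplicities across the outer tensor factors), followed by the observation $Q_1(\lambda(0)|\varnothing)=P^{\lambda(0)}=Y(\lambda(0)|\varnothing)$. The one thing worth making explicit is the justification that multiplicities factor across $\boxtimes$: this holds here because the $Q_{p^i}$ are absolutely indecomposable (their heads are the absolutely irreducible wreath-product simples of Lemma~\ref{lem:Qcover}), so outer tensor products of the relevant indecomposables are again indecomposable and determine their factors up to isomorphism.
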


We remark that the reduction formula for signed $p$ Kostka numbers
 in Corollary \ref{cor:Klyachko} has previously been obtained by Danz, the first and the second authors in \cite{DGL}.

The proof of the following lemma is very easy and  is left to the reader.
Recall that the notation $\bullet$ for the concatenation of two compositions
was defined in~\ref{Sec:PCs}.

\begin{lem}\label{L:Nrhovarrho} Let
\begin{align*}
\rho&=(1^{m_0},p^{m_1},\ldots,(p^r)^{m_r}), \\ 
\gamma&=(1^{n_0},p^{n_1},\ldots,(p^s)^{n_s}), 
\end{align*}
be partitions of $m$ and $n$ respectively, and let $k>r$. Then
\begin{align*}
  P_{\rho\bbullet p^k\gamma}&=P_\rho\times P_{p^k\gamma},\\
  N_{\sym{m+p^kn}}(P_{\rho}\times P_{p^k\gamma})&=N_{\sym{m}}(P_\rho)\times N_{\sym{p^kn}}(P_{p^k\gamma}),\\
  N_{\rho\bbullet p^k\gamma}/P_{\rho\bbullet p^k\gamma}&=(N_\rho/P_\rho)\times (N_{p^k\gamma}/P_{p^k\gamma}).
\end{align*}
\end{lem}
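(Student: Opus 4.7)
The plan is to observe that the hypothesis $k > r$ forces a complete separation of orbit sizes, and then read off all three equalities in turn.

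First, I would unpack the definitions. The composition $\rho \bbullet p^k\gamma$ has $m_i$ parts of size $p^i$ for $i\in\{0,\ldots,r\}$ coming from $\rho$, and $n_j$ parts of size $p^{k+j}$ for $j\in\{0,\ldots,s\}$ coming from $p^k\gamma$. Since $k>r$, these two collections of part-sizes are disjoint, so $\rho\bbullet p^k\gamma$ is already a partition of $m+p^kn$ of the form $(1^{m_0},p^{m_1},\ldots,(p^r)^{m_r},(p^k)^{n_0},\ldots,(p^{k+s})^{n_s})$. The definition of $P_\sigma$ for such a partition $\sigma$ in Section~\ref{Sec:Syl} then gives
\[
P_{\rho\bbullet p^k\gamma}=\prod_{i=0}^r(P_{p^i})^{m_i}\times\prod_{j=0}^s(P_{p^{k+j}})^{n_j}=P_\rho\times P_{p^k\gamma},
\]
which is the first claim.

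Next I would tackle the equality of normalizers. The containment
\[
N_{\sym{m}}(P_\rho)\times N_{\sym{p^kn}}(P_{p^k\gamma})\subseteq N_{\sym{m+p^kn}}(P_\rho\times P_{p^k\gamma})
\]
is immediate because $P_\rho$ acts on $\{1,\ldots,m\}$ and $P_{p^k\gamma}$ acts on $\{m+1,\ldots,m+p^kn\}$, so each factor normalizes the other trivially. For the reverse inclusion, let $g\in\sym{m+p^kn}$ normalize $P_\rho\times P_{p^k\gamma}$. Then $g$ permutes the orbits of this group on $\{1,\ldots,m+p^kn\}$, preserving their sizes. The orbits from $P_\rho$ have sizes in $\{1,p,\ldots,p^r\}$ while the orbits from $P_{p^k\gamma}$ have sizes in $\{p^k,\ldots,p^{k+s}\}$. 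Because $k>r$ these two size-sets are disjoint, so $g$ sends $\{1,\ldots,m\}$ (the union of the small orbits) to itself, and likewise preserves $\{m+1,\ldots,m+p^kn\}$. Thus $g=(g_1,g_2)$ with $g_1\in N_{\sym{m}}(P_\rho)$ and $g_2\in N_{\sym{p^kn}}(P_{p^k\gamma})$.

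Finally, the third equality is a direct consequence of the first two: using the definition $N_{\rho\bbullet p^k\gamma}=N_{\sym{m+p^kn}}(P_{\rho\bbullet p^k\gamma})$ together with the identifications just established, we obtain
\[
N_{\rho\bbullet p^k\gamma}/P_{\rho\bbullet p^k\gamma}=\bigl(N_\rho\times N_{p^k\gamma}\bigr)/\bigl(P_\rho\times P_{p^k\gamma}\bigr)=(N_\rho/P_\rho)\times(N_{p^k\gamma}/P_{p^k\gamma}).
\]
The only substantive step is the orbit-size argument in the middle paragraph; once the hypothesis $k>r$ is used there, the rest is bookkeeping. I therefore expect no serious obstacle, in line with the authors' remark that the proof is left to the reader.
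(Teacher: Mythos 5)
Your proof is correct, and since the paper leaves this lemma's proof to the reader there is nothing to compare against: the orbit-size argument in your middle paragraph (that $k>r$ makes the orbit sizes of $P_\rho$ on $\{1,\ldots,m\}$ and of $P_{p^k\gamma}$ on $\{m+1,\ldots,m+p^kn\}$ disjoint, so any normalizing element must preserve each block) is exactly the key observation the authors are implicitly relying on. The first equality follows directly from the definition of $P_\sigma$ for a composition $\sigma$, and the third is immediate from the first two, as you say.
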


Let $(\lambda|p\mu) \in \P^2(n)$. Suppose that the $p$-adic expansions of $\lambda$ and $\mu$
 are
 $\lambda = \sum_{i \ge 0} p^i \lambda(i)$ and
$\mu = \sum_{i \ge 0} p^i \mu(i)$, respectively. Let $\mu(-1) = \varnothing$.
If $r$ is maximal such that
$|\lambda(r)| + |\mu(r-1)| \not=0$ then we set
\begin{equation}\label{eq:ellp} \ell_p(\lambda|p\mu) = r. \end{equation}

\begin{lem}\label{L:twistBroue} Let $(\lambda|p\mu)\in\P^2(n)$ and let $P_\rho$ be a \vertex
of the signed Young module $Y(\lambda|p\mu)$.
\begin{enumerate}
\item [(i)] The signed Young module $Y(p\lambda|p^2\mu)$ has \vertex $P_{p\rho}$.
\item [(ii)] Suppose that $k>\ell_p(\lambda|p\mu)$ and let $(\alpha|\beta)\in\P^2(m)$ for some $m\in \N$. Then $Y(\lambda+p^k\pdot \alpha\,|\,p(\mu+p^k\pdot \beta))$ has \vertex $P_\rho\times P_{p^k
\pdot \gamma}$ where $P_\gamma$ is a \vertex of $Y(\alpha|p\beta)$. Moreover,
\[Y(\lambda|p\mu)(P_\rho)\boxtimes Y(p^k \pdot \alpha|p^{k+1}\pdot \beta)(P_{p^k\pdot \gamma})\]
is isomorphic to the Brou{\'e}
correspondent 
$Y\bigl( \lambda+p^k \pdot \alpha\,|\,p(\mu+p^k \pdot \beta)\bigr)(P_\rho \times P_{p^k \pdot \gamma})$.
\end{enumerate}
\end{lem}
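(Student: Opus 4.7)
The plan for both parts is to compute the relevant $p$-adic expansions and then read the vertex and its Broué correspondent directly from Definition \ref{D:signedYoung}.

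For part (i), I would start by writing the given $p$-adic expansions $\lambda=\sum_{i\ge 0}p^i\pdot \lambda(i)$ and $\mu=\sum_{i\ge 0}p^i\pdot \mu(i)$. Multiplying by $p$ simply shifts digits, so $p\lambda$ has $p$-adic digit $\lambda(i-1)$ in position $i\ge 1$ and $\varnothing$ in position $0$, and similarly $p^2\mu=p\cdot p\mu$ has digit $\mu(i-2)$ in position $i\ge 2$. Feeding these expansions into Definition \ref{D:signedYoung} shows that the vertex partition for $Y(p\lambda|p^2\mu)$ has $|\lambda(i-1)|+|\mu(i-2)|$ parts of size $p^i$, which is precisely $p\rho$.

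For part (ii), the hypothesis $k>\ell_p(\lambda|p\mu)$ guarantees that $\lambda(i)=\varnothing$ and $\mu(i-1)=\varnothing$ for all $i\ge k$. Thus adding $p^k\pdot \alpha$ to $\lambda$ and $p^k\pdot \beta$ to $\mu$ involves no carries, so the $p$-adic digits of $\tilde\lambda=\lambda+p^k\pdot \alpha$ and $\tilde\mu=\mu+p^k\pdot \beta$ are
\[\tilde\lambda(i)=\begin{cases}\lambda(i)&\text{if }i<k,\\ \alpha(i-k)&\text{if }i\ge k,\end{cases}\qquad \tilde\mu(i)=\begin{cases}\mu(i)&\text{if }i<k,\\ \beta(i-k)&\text{if }i\ge k.\end{cases}\]
Applying Definition \ref{D:signedYoung} again, the vertex of $Y(\tilde\lambda|p\tilde\mu)$ is a Sylow $p$-subgroup of a Young subgroup whose associated partition has $|\tilde\lambda(i)|+|\tilde\mu(i-1)|$ parts of size $p^i$. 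Splitting this count at $i=k$ and using $\tilde\mu(k-1)=\mu(k-1)=\varnothing$, the partition in question is exactly $\rho\bbullet p^k\pdot \gamma$, where $\gamma$ is the vertex partition of $Y(\alpha|p\beta)$. Lemma \ref{L:Nrhovarrho} then identifies $P_{\rho\bbullet p^k\pdot \gamma}$ with $P_\rho\times P_{p^k\pdot \gamma}$, which gives the asserted vertex.

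For the outer tensor product statement I would use the explicit form of the Broué correspondent supplied by Definition \ref{D:signedYoung}: it expresses $Y(\tilde\lambda|p\tilde\mu)(P_{\tilde\rho})$ as an outer tensor product of factors $Q_{p^i}(\tilde\lambda(i)|\tilde\mu(i-1))$. Splitting the indices into $i<k$ and $i\ge k$, the first block reassembles into $Y(\lambda|p\mu)(P_\rho)$ and the second into $Y(p^k\pdot \alpha|p^{k+1}\pdot \beta)(P_{p^k\pdot \gamma})$. The decomposition of the normalizer quotient $N_{\tilde\rho}/P_{\tilde\rho}\cong (N_\rho/P_\rho)\times (N_{p^k\gamma}/P_{p^k\gamma})$ from Lemma \ref{L:Nrhovarrho} ensures this is genuinely an outer tensor product of modules over the asserted groups. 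No step involves a real obstacle; the whole argument is careful bookkeeping with $p$-adic digits, the essential role of the hypothesis $k>\ell_p(\lambda|p\mu)$ being precisely to enforce carry-free addition so that the digits of $\tilde\lambda$ and $\tilde\mu$ split cleanly into the contributions from $(\lambda,\mu)$ and from $(\alpha,\beta)$.
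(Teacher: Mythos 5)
Your proof is correct and follows essentially the same line of reasoning as the paper's: compute the $p$-adic expansions of the shifted and summed partitions (using $k>\ell_p(\lambda|p\mu)$ to ensure the digits split cleanly), read off the vertex partition from Definition~\ref{D:signedYoung}, and use Lemma~\ref{L:Nrhovarrho} to identify the normalizer quotient and split the Brou\'e correspondent as an outer tensor product of the $Q_{p^i}$ factors. The only cosmetic difference is that the paper writes out the intermediate partition $\eta$ explicitly, but the bookkeeping is identical.
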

\begin{proof} Suppose that $\lambda$ and $\mu$ have
$p$-adic expansions
$\sum_{i\geq 0} p^i\pdot \lambda(i)$ and $\sum_{i\geq 0} p^i\pdot \mu(i)$,
respectively. It is clear that the partitions $p\lambda$ and $p\mu$ have $p$-adic
expansions $p\lambda=\sum_{i\geq 1} p^i\pdot \lambda(i-1)$ and $p\mu=\sum_{i\geq 1} p^i\pdot \mu(i-1)$, respectively. So $|(p\lambda)(0)|=0$, and $|(p\lambda)(i)|+|(p\mu)(i-1)|=|\lambda(i-1)|+|\mu(i-2)|$ for all $i\geq 1$, where we set $\mu(-1)=\varnothing$. By Definition~\ref{D:signedYoung},
$Y(p\lambda|p^2\mu)$ has \vertex $P_{p\pdot \rho}$, proving part (i).

Let $r=\ell_p(\lambda|p\mu)$. For part (ii), since $k>r$, the $p$-adic expansions of $\lambda+p^k\alpha$ and $\mu+p^k\beta$ are
\begin{align*}
  \lambda+p^k\pdot \alpha&=\sum_{0\leq i\leq r} p^i\pdot \lambda(i)+\sum_{i\geq k} p^i\pdot \alpha(i-k),\\
  \mu+p^k\pdot \beta&=\sum_{0\leq i\leq r} p^i\pdot \mu(i)+\sum_{i\geq k} p^i\pdot \beta(i-k),
\end{align*} respectively. By Definition~\ref{D:signedYoung},
$Y\bigl(\lambda+p^k\alpha\,|\,p(\mu+p^k\beta)\bigr)$ has \vertex $P_\eta$
where
\[\eta=\bigl(1^{|\lambda(0)|},p^{|\lambda(1)|+|\mu(0)|},\ldots,(p^r)^{|\lambda(r)|+|\mu(r-1)|},(p^k)^{|\alpha(0)|},(p^{k+1})^{|\alpha(1)|+|\beta(0)|},\ldots\bigr)=\rho\bbullet p^k\pdot \gamma.\]
Thus $P_\eta=P_{\rho\bbullet p^k\gamma}=P_\rho\times P_{p^k\gamma}$.
By Definition~\ref{D:signedYoung} and Lemma~\ref{L:Nrhovarrho}, we have
\begin{align*}
  Y(\lambda+p^k\alpha\,{}&{}|\,p(\mu+p^k\beta))(P_{\rho \bbullet p^k \pdot \gamma}) \\
  &=Q_1(\lambda(0)|\varnothing)\boxtimes Q_p(\lambda(1)|\mu(0))\boxtimes\cdots\boxtimes Q_{p^r}(\lambda(r)|\mu(r-1))\\
   &\quad\qquad\qquad \boxtimes Q_{p^k}(\alpha(0)|\varnothing)\boxtimes Q_{p^{k+1}}(\alpha(1)|\beta(0))\boxtimes\cdots\\
  &\cong Y\bigl(\lambda|p\mu)(P_\rho)\boxtimes Y(p^k\alpha|p^{k+1}\beta\bigr)(P_{p^k\gamma}),
\end{align*}
as required. 
\end{proof}

The following result is an  interesting special case of \cite[Theorem 3.18]{SDanzKJLim}. It
is included  to illustrate a
technique used  again in the proof of Proposition~\ref{prop:labels}.

\begin{lem}\label{L:sgnTwist}
Let $n \in \N$. If $n = mp + c$ where $m \in \N_0$ and $0\leq c < p$
then $\sgn(n) \cong Y\bigl( (1^c) | (mp)\bigr)$.
\end{lem}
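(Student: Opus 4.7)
The plan is to identify $\sgn(n)$ with $Y((1^c)|(mp))$ via the Brou\'e correspondence (Theorem~\ref{BC1}): I will show that both are indecomposable $p$-permutation modules with the same vertex $P_\rho$ and isomorphic Brauer quotients at $P_\rho$.

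First I would set up the combinatorics. Write the $p$-adic expansion $m=\sum_{i=0}^s m_i p^i$ with $m_s\ne 0$; then $n=c+\sum_{i=0}^s m_i p^{i+1}$ is the $p$-adic expansion of $n$, and by Section~\ref{Sec:Syl} the subgroup $P_\rho$ with
\[ \rho=(1^c,p^{m_0},(p^2)^{m_1},\ldots,(p^{s+1})^{m_s}) \]
is a Sylow $p$-subgroup of $\sym{n}$. Since $\lambda=(1^c)$ is $p$-restricted and $\mu=(m)$ has $p$-adic expansion $\sum_i p^i\cdot(m_i)$, the multiplicities in Definition~\ref{D:signedYoung} are $n_0=|\lambda(0)|=c$ and $n_i=|\mu(i-1)|=m_{i-1}$ for $i\ge 1$, so $P_\rho$ is exactly the vertex prescribed for $Y((1^c)|(mp))$.

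Identifying the vertex of $\sgn(n)$ is straightforward: since $p$ is odd every element of $P_\rho$ is a product of odd-length cycles and hence is even, so $\sgn(n)^{P_\rho}=F$. Moreover $[P_\rho:Q]\equiv 0\pmod p$ for every proper $Q<P_\rho$, so $\mathrm{Tr}^{P_\rho}(\sgn(n))=0$ and $\sgn(n)(P_\rho)=F\ne 0$. By Theorem~\ref{BT2}, the vertex of $\sgn(n)$ contains the Sylow $p$-subgroup $P_\rho$, and therefore equals it.

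The main step is the computation and comparison of the two Brauer quotients as $F[N_\rho/P_\rho]$-modules. By Definition~\ref{D:signedYoung},
\[ Y((1^c)|(mp))(P_\rho) \cong Q_1\bigl((1^c)|\varnothing\bigr)\boxtimes Q_p\bigl(\varnothing|(m_0)\bigr)\boxtimes \cdots \boxtimes Q_{p^{s+1}}\bigl(\varnothing|(m_s)\bigr). \]
Because $c$ and each $m_i$ are strictly less than $p$, the group algebras $F\sym{c}$ and $F\sym{m_i}$ are semisimple, forcing $P^{(1^c)}\cong D_{(1^c)}=\sgn(c)$ and each $P^{(m_i)}\cong D_{(m_i)}=F$. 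Plugging these into~\eqref{eq:Qk} collapses each induction and yields
\[ Y((1^c)|(mp))(P_\rho) \cong \sgn(c)\boxtimes \widehat{\sgn(N_p)}^{\otimes m_0}\boxtimes \cdots \boxtimes \widehat{\sgn(N_{p^{s+1}})}^{\otimes m_s}. \]
For $\sgn(n)(P_\rho)$ I would identify $N_\rho\cong \sym{c}\times \prod_{i\ge 1} N_{p^i}\wr\sym{m_{i-1}}$ (as in Section~\ref{Sec:Syl}) and restrict the sign representation factor by factor. The key observation, which is essentially~\eqref{eq:sgnres} for odd $k$, is that an element $(g_1,\ldots,g_m;\sigma)\in \sym{k}\wr\sym{m}\subseteq \sym{km}$ has sign $\sgn(\sigma)^k\prod_j\sgn(g_j)=\sgn(\sigma)\prod_j\sgn(g_j)$, which is precisely the action of $\widehat{\sgn(k)}^{\otimes m}$. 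Restricting to $N_{p^i}\wr\sym{m_{i-1}}$ and descending to $(N_{p^i}/P_{p^i})\wr\sym{m_{i-1}}$ (permitted since $\sgn$ is trivial on $P_{p^i}$) then recovers exactly the factor $\widehat{\sgn(N_{p^i})}^{\otimes m_{i-1}}$ in the decomposition above. The two Brauer quotients agree, so Theorem~\ref{BC1}(i) gives $\sgn(n)\cong Y((1^c)|(mp))$. The only genuinely delicate point is this sign bookkeeping in the wreath-product decomposition; everything else is a routine unwinding of the definitions.
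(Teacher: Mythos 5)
Your proof is correct and takes essentially the same route as the paper: identify $P_\rho$ as the common vertex, compute $Y((1^c)|(mp))(P_\rho)$ from Definition~\ref{D:signedYoung} using semisimplicity of $F\sym{c}$ and $F\sym{m_i}$, recognise the result as $\Res^{\sym{n}}_{N_\rho}\sgn(n)$ (the paper invokes~\eqref{eq:sgnres} where you re-derive the sign identity directly), and conclude by bijectivity of the Brou\'e correspondence.
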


\begin{proof}
Let $n = \sum_{i=0}^r p^i n_i$ be the $p$-adic expansion of $n$,
and let $\rho = (1^{n_0},p^{n_1},\ldots,(p^r)^{n_r})$.
By Definition~\ref{D:signedYoung}, the signed Young module
$Y\bigl( (1^c) | (mp) \bigr)$ has $P_\rho$ as a vertex and
\[ Y\bigl( (1^c) | (mp) \bigr)(P_\rho) \cong Q_1\bigl( (1^c)|\varnothing\bigr) \boxtimes
Q_p\bigl( \varnothing | (n_1)\bigr) \boxtimes \cdots \boxtimes Q_{p^r}\bigl(\varnothing | (n_r)\bigr)\]
as a module for $F[N_\rho/P_\rho]$. Since $n_i < p$ we have
\[ Q_{p^i}\bigl( \varnothing | (n_i) \bigr) \cong \Inf_{\sym{n_i}}^{(N_{p^i}/P_{p^i}) \wr \sym{n_i}}
\bigl(F(n_i) \bigr)
\otimes \widehat{\sgn(N_{p^i})}^{\otimes n_i} \cong \Res^{\sym{p^in_i}}_{N_{p^i}\wr\sym{n_i}}
\bigl(\sgn(p^in_i)\bigr)   \]
where the second isomorphism follows from~\eqref{eq:sgnres}, regarding the right-hand
side as a representation of $(N_{p^i}/P_{p^i})\wr\sym{n_i}$. Hence there is an isomorphism of $FN_\rho$-modules,
\[ Y\bigl( (1^c) | (mp) \bigr)(P_\rho) \cong \Res^{\sym{n}}_{N_\rho}\bigl( \sgn(n) \bigr). \]
On the other hand, since $P_\rho$ is a Sylow $p$-subgroup of $\sym{n}$, it is a vertex
of $\sgn(n)$, and clearly $\sgn(n)(P_\rho) \cong \Res^{\sym{n}}_{N_\rho} \bigl(\sgn(n)\bigr)$
as an $FN_\rho$-module.
The Brou{\'e} correspondence is bijective (see Theorem~\ref{BC1}), so
we have $Y\bigl( (1^c) | (mp) \bigr) \cong \sgn(n)$.
\end{proof}

\section{Signed $p$-Kostka numbers}\label{S:Main}
In this section we prove
Theorem \ref{T:YmultMY} and Theorem \ref{T:2}. We work mainly with the
$F[(N_k/P_k) \wr \sym{m}]$-modules $W_k(\gamma|\delta)$ and $Q_k(\alpha|\beta)$ defined
in Definitions~\ref{D:VW} and~\ref{def:Qk}, and the $F[C_2 \wr \sym{m}]$-modules
$\overline{W}_k(\gamma|\delta)$ and $\overline{Q}_k(\alpha|\beta)$ obtained from them
by factoring out the trivial action of the even permutations in the base group of the wreath product.

We begin with a key lemma for the proof of Theorem \ref{T:YmultMY}.

\begin{lem}\label{L:multWQ} Let $n \in \N$.
For any $(\gamma|\delta)\in\C^2(n)$ and $(\lambda|\mu)\in\RP^2(n)$
 we have
\begin{enumerate}
  \item [(i)] $\bigl[W_{p^{i+1}}(\gamma|\delta):Q_{p^{i+1}}(\lambda|\mu)\bigr]=
  \bigl[W_{p^i}(\gamma|\delta):Q_{p^i}(\lambda|\mu)\bigr]$ for all $i\geq 1$,
  \item [(ii)] $\bigl[W_p(\gamma|\varnothing):Q_p(\lambda|\varnothing)\bigr]=
  \bigl[W_1(\gamma|\varnothing):Q_1(\lambda|\varnothing)\bigr]$,
  \item [(iii)] $\bigl[W_p(\gamma|\delta):Q_p(\lambda|\varnothing)\bigr]=0$ if $\delta\neq \varnothing$.

\end{enumerate}
\end{lem}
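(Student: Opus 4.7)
The plan is to reduce all three statements to multiplicity computations of $F[C_2\wr\sym{m}]$-modules, by applying Lemma~\ref{L:trivialnormal} to pass from the group $(N_{p^i}/P_{p^i})\wr\sym{m}$ to its quotient $C_2\wr\sym{m}$. The normal subgroup $(N_{\alt{p^i}}(P_{p^i})/P_{p^i})^m$ acts trivially on both $W_{p^i}(\gamma|\delta)$ and $Q_{p^i}(\lambda|\mu)$: in each case the base-group action factors through the sign character of $\sym{p^i}$ tensored with inflations from $\sym{m_j}$. Consequently the two modules descend to the $F[C_2\wr\sym{m}]$-modules $\overline{W}_{p^i}(\gamma|\delta)$ and $\overline{Q}_{p^i}(\lambda|\mu)$. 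To apply Lemma~\ref{L:trivialnormal} I also need $Q_{p^i}(\lambda|\mu)$ to be indecomposable; this follows because $\overline{Q}_{p^i}(\lambda|\mu)\cong \overline{Q}(\lambda|\mu)$ is indecomposable by Lemma~\ref{lem:Qcover}, so any direct sum decomposition of $Q_{p^i}(\lambda|\mu)$ would descend to one of $\overline{Q}_{p^i}(\lambda|\mu)$. Hence for every $i\geq 1$,
\[[W_{p^i}(\gamma|\delta):Q_{p^i}(\lambda|\mu)]=[\overline{W}_{p^i}(\gamma|\delta):\overline{Q}_{p^i}(\lambda|\mu)].\]

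Part (i) is then immediate: for $i\geq 1$, both $p^i$ and $p^{i+1}$ are at least $2$, so Lemmas~\ref{L:barWk} and~\ref{L:barQk} identify both sides with $[V_2(\gamma|\delta):\overline{Q}(\lambda|\mu)]$, a multiplicity manifestly independent of~$i$.

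For part (ii), the assumption $\delta=\varnothing$ forces $m_2=0$, so the sign tensor factor vanishes from both $\overline{W}_p(\gamma|\varnothing)\cong V_2(\gamma|\varnothing)$ and $\overline{Q}_p(\lambda|\varnothing)\cong \overline{Q}(\lambda|\varnothing)$. Each is inflated from $\sym{m}$ via the canonical surjection $C_2\wr\sym{m}\to \sym{m}$, with inflation source $M^\gamma$ and $P^\lambda$, respectively. A second application of Lemma~\ref{L:trivialnormal} with $H=C_2^m$ (using that $P^\lambda$ is indecomposable) then gives
\[[\overline{W}_p(\gamma|\varnothing):\overline{Q}_p(\lambda|\varnothing)]=[M^\gamma:P^\lambda],\]
and $[M^\gamma:P^\lambda]=[W_1(\gamma|\varnothing):Q_1(\lambda|\varnothing)]$ by the identifications $W_1(\gamma|\varnothing)\cong M^\gamma$ and $Q_1(\lambda|\varnothing)=P^\lambda$ recorded after Definitions~\ref{D:VW} and~\ref{def:Qk}.

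For part (iii), I compare the restrictions to the base group $C_2^m\trianglelefteq C_2\wr\sym{m}$. The target $\overline{Q}_p(\lambda|\varnothing)\cong \Inf_{\sym{m}}^{C_2\wr\sym{m}} P^\lambda$ has $C_2^m$ acting trivially, so its only $C_2^m$-isotypic content is the trivial character. On the other hand, applying Mackey to the induction formula defining $\overline{W}_p(\gamma|\delta)\cong V_2(\gamma|\delta)$, the restriction to $C_2^m$ decomposes as a direct sum of characters each of which has exactly $m_2$ sign factors among its $m$ components; when $\delta\neq\varnothing$ we have $m_2\geq 1$, so the trivial character of $C_2^m$ never appears. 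Hence $\overline{Q}_p(\lambda|\varnothing)$ cannot occur as a direct summand of $\overline{W}_p(\gamma|\delta)$, and the multiplicity is zero. The main obstacle in the argument is the bookkeeping involved in the three applications of Lemma~\ref{L:trivialnormal}---verifying both the triviality of the relevant normal-subgroup actions and the indecomposability of the target modules---after which the three equalities reduce to essentially formal computations in small wreath-product groups.
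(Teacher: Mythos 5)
Your proof is correct and follows essentially the same route as the paper: reduce to $F[C_2\wr\sym{m}]$-modules via Lemma~\ref{L:trivialnormal}, then exploit that $\overline{W}_{p^j}(\gamma|\delta)$ and $\overline{Q}_{p^j}(\lambda|\mu)$ are independent of $j\geq 1$ by Lemmas~\ref{L:barWk} and~\ref{L:barQk}. The one cosmetic difference is in part~(iii), where you reprove the needed base-group restriction fact directly instead of citing Proposition~\ref{P:dominates}; that restriction computation is precisely the key step inside the cited proposition's proof, so this is a difference of citation rather than of substance.
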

\begin{proof} By Lemma \ref{L:barWk} and Lemma \ref{L:barQk}
we have $\overline{Q}_{p^j}(\lambda|\mu)\cong R_2(\lambda|\mu)$ and $\overline{W}_{\!p^j}(\gamma|\delta)\cong V_2(\gamma|\delta)$
 for all $j\geq 1$.
 Part (i) now follows by applying Lemma~\ref{L:trivialnormal}.
For (ii), if $\delta=\mu=\varnothing$ then
\begin{align*}
\overline{W}_{\!p}(\gamma|\varnothing)&\cong V_2(\gamma|\varnothing)= \Inf^{C_2\wr\sym{m}}_{\sym{m}}
(M^\gamma)=\overline{W}_{\!1}(\gamma|\varnothing),\\ \overline{Q}_p(\lambda|\varnothing)&\cong R_2(\lambda|\varnothing)=\Inf^{C_2\wr\sym{m}}_{\sym{m}}(\Plambda)=\overline{Q}_1(\lambda|\varnothing).
\end{align*}
So $\bigl[\vthinspace\overline{W}_{\!p}(\gamma|\varnothing):\overline{Q}_p(\lambda|\varnothing)\bigr]=
\bigl[\vthinspace\overline{W}_{\!1}(\gamma|\varnothing):\overline{Q}_1(\lambda|\varnothing)\bigr]$. Now apply
Lemma~\ref{L:trivialnormal}.
Finally (iii) follows from Proposition~\ref{P:dominates} and Lemma~\ref{L:trivialnormal}.
\end{proof}

We are now ready to prove Theorem \ref{T:YmultMY}. 

\begin{proof}[Proof of Theorem \ref{T:YmultMY}] Let $P_\rho$ be a \vertex of $Y(\lambda|p\mu)$.
By Definition~\ref{D:signedYoung} 
we have
$$\rho=(1^{n_0},p^{n_1},(p^2)^{n_2},\ldots, (p^r)^{n_r}),$$ where $n_0=|\lambda(0)|$ and $n_i=|\lambda(i)|+|\mu(i-1)|$ for all $i\in\{1,\ldots,r\}$. By the Brou{\'e} correspondence (see Theorem~\ref{BC1})
and the description of the Brou{\'e} correspondents of signed Young modules in
Lemma~\ref{L:twistBroue},  it is equivalent to show
that
\[\bigl[M(p\alpha|p\beta)(P_{p\rho}):Y(p\lambda|p^2\mu)(P_{p\rho})\bigr]\leq
\bigl[M(\alpha|\beta)(P_\rho):Y(\lambda|p\mu)(P_\rho)\bigr].\]
Let $\Lambda = \LambdaABR$ and $\Lambda' =\Lambda\bigl((p\alpha|p\beta),p\rho\bigr)$
be as defined in Notation~\ref{N:Omega}. Observe that~$\Lambda'$
consists of all compositions
\[ (\varnothing,\mbf{\gamma}_0,\mbf{\gamma}_1,\ldots,\mbf{\gamma}_r|\varnothing,\mbf{\delta}_0,\mbf{\delta}_1,\ldots,\mbf{\delta}_r) \]
where $(\mbf{\gamma}_0,\mbf{\gamma}_1,\ldots,\mbf{\gamma}_r|\mbf{\delta}_0,\mbf{\delta}_1,\ldots,\mbf{\delta}_r)\in\Lambda$. 
By Lemma~\ref{P:Broueperm} applied to $M(p\alpha|p\beta)(P_{p\rho})$, we have
\begin{align*}
M(p\alpha|p\beta)(P_{p\rho})&\cong \bigoplus_{(\mbf{\gamma}'|\mbf{\delta}')\in\Lambda'} W_1(\mbf{\gamma}'_0|\mbf{\delta}'_0)\boxtimes W_{\negthinspace p}(\mbf{\gamma}'_1|\mbf{\delta}'_1)\boxtimes \cdots\boxtimes W_{\!p^{r+1}}(\mbf{\gamma}'_{r+1}|\mbf{\delta}'_{r+1})\\
&= \bigoplus_{(\mbf{\gamma}|\mbf{\delta})\in\Lambda} W_1(\varnothing|\varnothing)\boxtimes W_{\negthinspace p}(\mbf{\gamma}_0|\mbf{\delta}_0)\boxtimes \cdots\boxtimes W_{\!p^{r+1}}(\mbf{\gamma}_r|\mbf{\delta}_r).
\end{align*}
By Definition~\ref{D:signedYoung} 
and Lemma~\ref{L:twistBroue}(i), we obtain both
\begin{align*}
\bigl[M(p\alpha|p\beta)(P_{p\rho}):Y(p\lambda|p^2\mu)(P_{p\rho})\bigr]
=&\sum_{(\mbf{\gamma}|\mbf{\delta})\in\Lambda}\prod_{i=0}^r
\bigl[W_{p^{i+1}}(\mbf{\gamma}_i|\mbf{\delta}_i):Q_{p^{i+1}}(\lambda(i)|\mu(i-1))\bigr],\\
\bigl[M(\alpha|\beta)(P_\rho):Y(\lambda|p\mu)(P_\rho)\bigr]=
&\sum_{(\mbf{\gamma}|\mbf{\delta})\in\Lambda}\prod_{i=0}^r \bigl[W_{p^i}(\mbf{\gamma}_i|\mbf{\delta}_i):Q_{p^i}(\lambda(i)|\mu(i-1))\bigr],
\end{align*} where, as usual, $\mu(-1)=\varnothing$. By Lemma~\ref{L:multWQ}, we have
\[\bigl[W_{p^{i+1}}(\mbf{\gamma}_i|\mbf{\delta}_i):Q_{p^{i+1}}(\lambda(i)|\mu(i-1))\bigr]=
\bigl[W_{p^i}(\mbf{\gamma}_i|\mbf{\delta}_i):Q_{p^i}(\lambda(i)|\mu(i-1))\bigr]\]
for all $i\geq 1$, and for $i=0$ whenever $\mbf{\delta}_0=\varnothing$. Otherwise, when $i=0$ and $\mbf{\delta}_0\neq\varnothing$, we have
\[0=\bigl[W_p(\mbf{\gamma}_0|\mbf{\delta}_0):Q_p(\lambda(0)|\varnothing)\bigr]\leq \bigl[W_1(\mbf{\gamma}_0|\mbf{\delta}_0):Q_1(\lambda(0)|\varnothing)\bigr].\]
This completes the proof.
\end{proof}

\begin{cor}\label{C:DELTAemptyset}
Let $(\alpha|\beta),(\lambda|p\mu)\in\P^2(n)$. Suppose that $\lambda(0)=\varnothing$. Then
\[\bigl[M(p\alpha|p\beta):Y(p\lambda|p^2\mu)\bigr]=\bigl[M(\alpha|\beta):Y(\lambda|p\mu)\bigr].\]
\end{cor}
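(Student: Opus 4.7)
The plan is to observe that this corollary is an immediate consequence of Theorem~\ref{T:YmultMY} once we unpack the hypothesis. Theorem~\ref{T:YmultMY} states the inequality
\[\bigl[M(p\alpha|p\beta):Y(p\lambda|p^2\mu)\bigr]\leq\bigl[M(\alpha|\beta):Y(\lambda|p\mu)\bigr],\]
together with a sufficient condition for equality, namely that $\mbf{\delta}_0=\varnothing$ for every $(\mbf{\gamma}|\mbf{\delta})\in\LambdaABR$, where $\rho$ is the vertex-type attached to $Y(\lambda|p\mu)$ in Definition~\ref{D:signedYoung}.

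So my plan is simply to verify that the hypothesis $\lambda(0)=\varnothing$ forces this equality condition. By Definition~\ref{D:signedYoung}, the vertex partition $\rho=(1^{n_0},p^{n_1},\ldots,(p^r)^{n_r})$ attached to $Y(\lambda|p\mu)$ has $n_0=|\lambda(0)|$. The assumption $\lambda(0)=\varnothing$ gives $n_0=0$. Now by part~(2) of Notation~\ref{N:Omega}, every element $(\mbf{\gamma}|\mbf{\delta})\in\LambdaABR$ satisfies $|\mbf{\gamma}_0|+|\mbf{\delta}_0|=n_0=0$, and hence $\mbf{\gamma}_0=\mbf{\delta}_0=\varnothing$. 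In particular $\mbf{\delta}_0=\varnothing$ for all such $(\mbf{\gamma}|\mbf{\delta})$, so Theorem~\ref{T:YmultMY} yields the desired equality.

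There is essentially no obstacle here: the content is entirely bookkeeping about how the vertex-type $\rho$ is built from the $p$-adic digits of $\lambda$ and $\mu$, combined with the compatibility condition $|\mbf{\gamma}_i|+|\mbf{\delta}_i|=n_i$ built into the definition of $\LambdaABR$. If one wanted to be completely explicit, one could also note that the equality holds trivially (with both sides equal to $0$) in the degenerate case where $\LambdaABR=\varnothing$, but this case is already subsumed by the statement of Theorem~\ref{T:YmultMY}. Thus the proof should occupy only a few lines, amounting to no more than a pointer to Theorem~\ref{T:YmultMY} together with the remark that $n_0=0$ forces the vacuous verification of its hypothesis.
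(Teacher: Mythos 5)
Your proof is correct and is essentially the same as the paper's: the paper phrases the key observation as "$P_\rho$ has no fixed points on $\{1,\ldots,n\}$", which is exactly your $n_0=|\lambda(0)|=0$, and both you and the paper conclude $\mbf{\delta}_0=\varnothing$ for every $(\mbf{\gamma}|\mbf{\delta})\in\LambdaABR$ before invoking Theorem~\ref{T:YmultMY}.
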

\begin{proof}
Let $\rho \in \C(n)$ be defined by $$\rho=(1^{|\lambda(0)|}, p^{|\lambda(1)|+|\mu(0)|},\ldots, (p^r)^{|\lambda(r)|+|\mu(r-1)|}).$$
The \vertex $P_\rho$ of $Y(\lambda|p\mu)$ has no fixed points in
$\{1,2,\ldots, n\}$. Hence $\mbf{\delta}_0=\varnothing$ for any $(\mbf{\gamma}|\mbf{\delta})\in\LambdaABR$. The result now follows from
Theorem~\ref{T:YmultMY}.
\end{proof}

It is now very easy to deduce the asymptotic stability of signed $p$-Kostka numbers
mentioned in the introduction.

\begin{cor}\label{C:asymptotic}
Let $(\alpha|\beta),(\lambda|p\mu)\in\P^2(n)$. Then, for every natural number $k\geq 2$, we have
\[\bigl[M(p^k\alpha|p^k\beta):Y(p^k\lambda|p^{k+1}\mu)\bigr]=
\bigl[M(p\alpha|p\beta):Y(p\lambda|p^2\mu)\bigr]\leq \bigl[M(\alpha|\beta):Y(\lambda|p\mu)\bigr].\]
\end{cor}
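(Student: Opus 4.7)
The plan is to combine Theorem~\ref{T:YmultMY} with its special-case companion Corollary~\ref{C:DELTAemptyset} and then iterate. The inequality
\[ \bigl[M(p\alpha|p\beta):Y(p\lambda|p^2\mu)\bigr]\leq \bigl[M(\alpha|\beta):Y(\lambda|p\mu)\bigr] \]
is exactly the content of Theorem~\ref{T:YmultMY}, so only the chain of equalities on the left-hand side needs argument.

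To obtain those equalities, I would observe that, for any $j\geq 1$, the partition $p^j\lambda$ is divisible by $p$, so its $p$-adic expansion begins with the empty partition; that is, $(p^j\lambda)(0)=\varnothing$. Consequently Corollary~\ref{C:DELTAemptyset} can be applied with input partitions $(p^j\alpha|p^j\beta)$ and $(p^j\lambda\,|\,p\cdot p^j\mu)$, and it yields
\[ \bigl[M(p^{j+1}\alpha|p^{j+1}\beta):Y(p^{j+1}\lambda|p^{j+2}\mu)\bigr]=\bigl[M(p^j\alpha|p^j\beta):Y(p^j\lambda|p^{j+1}\mu)\bigr]. \]
Iterating this identity for $j=1,2,\ldots,k-1$ then gives
\[ \bigl[M(p^k\alpha|p^k\beta):Y(p^k\lambda|p^{k+1}\mu)\bigr]=\bigl[M(p\alpha|p\beta):Y(p\lambda|p^2\mu)\bigr] \]
for every $k\geq 2$, which is the remaining part of the statement.

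There is no genuine obstacle: all the combinatorial content has already been absorbed into Theorem~\ref{T:YmultMY} and Corollary~\ref{C:DELTAemptyset}, and the verification that the hypothesis $\lambda(0)=\varnothing$ of the latter holds at every step is immediate from $p\mid p^j$ for $j\geq 1$. The only care required is bookkeeping on the $p$-adic labels so that the replacement $\lambda\mapsto p^j\lambda$, $\mu\mapsto p^j\mu$ is applied consistently throughout the iteration.
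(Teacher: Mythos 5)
Your proof is correct and follows the same route as the paper: the paper simply says the corollary ``follows immediately from Corollary~\ref{C:DELTAemptyset} and Theorem~\ref{T:YmultMY},'' and your argument is exactly the intended unpacking of that one-line deduction, iterating Corollary~\ref{C:DELTAemptyset} using the observation that $(p^j\lambda)(0)=\varnothing$ for $j\geq 1$.
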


\begin{proof}
This follows immediately from Corollary~\ref{C:DELTAemptyset} and Theorem~\ref{T:YmultMY}.
\end{proof}

\begin{eg}\label{Eg:strictineq} We present a family of examples
where the inequality in Theorem~\ref{T:YmultMY} is strict. Let $0<c<p$ and let $m \in \N$.
Since $\sym{mp}\times \sym{r}$ has index coprime to $p$ in $\sym{mp+c}$, the trivial module~$Y((mp+c)|\varnothing)$ is a direct summand of $M((mp,c)|\varnothing)$; the multiplicity is $1$ since
$M((mp,c)|\varnothing)$ comes from a transitive action of $\sym{mp+c}$.
By Lemma~\ref{L:sgnTwist} we have $\sgn(n) \cong Y((1^c)|(mp))$. Thus
\begin{align*}
\bigl[M\bigl(\varnothing|{}&{}(mp,c)\bigr):Y\bigl((1^c)|(mp)\bigr)\bigr] \\
&=\bigl[M\bigl(\varnothing|(mp,c)\bigr)\otimes\sgn(mp+c):Y((1^c)|(mp))\otimes \sgn(mp+c)\bigr]\\
&=\bigl[M\bigl((mp,c)|\varnothing\bigr):Y\bigl((mp+c)|\varnothing\bigr)\bigr]\\ &=1.
\intertext{On the other hand, $[M((mp^2,cp)|\varnothing):Y((mp^2)|(p(1^c)))]=0$ because, by \cite[2.3(6)]{SDonkin}, the signed Young modules are pairwise non-isomorphic and so
the signed Young module
$Y((mp^2)|p(1^c))$ is not isomorphic to a Young module. Thus we have}
\bigl[M\bigl(\varnothing|p({}&{}mp,c)\bigr):Y\bigl(p(1^c)|p(mp)\bigr)\bigr] \\
&=\bigl[M\bigl(\varnothing|(mp^2,cp)\bigr)\otimes\sgn(mp^2+cp):Y\bigl(p(1^c)|p(mp)\bigr)
\otimes \sgn(mp^2+cp)\bigr]\\
&=\bigl[M\bigl((mp^2,cp)|\varnothing\bigr):Y\bigl((mp^2)|p(1^c)\bigr)\bigr]\\ &=0,
\end{align*} where the penultimate equation is obtained using \cite[Theorem 3.18]{SDanzKJLim}. This shows that
\[ \bigl[M\bigl(\varnothing|p(mp,c)\bigr):Y\bigl(p(1^c)|p(mp)\bigr)\bigr]
<\bigl[M\bigl(\varnothing|(mp,c)\bigr):Y\bigl((1^c)|(mp)\bigr)\bigr].\]
\end{eg}


We now turn to the proof of Theorem \ref{T:2}. We  need a
further result on the Brauer quotients of signed Young permutation modules.

\begin{prop}\label{P: 3.1} Let $m,n\in\N$ and let $(\pi|\widetilde\pi)\in \C^2(m)$ and $(\phi|\widetilde\phi)\in\C^2(n)$.
Let $\rho \in \C(m)$ and $\gamma \in \C(n)$ be compositions of the form
\begin{align*}
\rho&=(1^{m_0},p^{m_1},\ldots,(p^r)^{m_r}), \\ 
\gamma&=(1^{n_0},p^{n_1},\ldots,(p^s)^{n_s}). 
\end{align*}
For all $k\in\mathbb{N}$ such that $k>r$, we have that $M(\pi|\widetilde\pi)(P_\rho)\boxtimes M(p^k\phi|p^k\widetilde\phi)(P_{p^k\gamma})$ is isomorphic to a direct summand of $M(\pi+p^k\phi|\widetilde\pi+p^k\widetilde\phi)(P_{\rho\bbullet p^k\gamma})$. Furthermore, if $p^k>\mathrm{max}\{\pi_1,\widetilde{\pi}_1\}$, then $$M(\pi|\widetilde\pi)(P_\rho)\boxtimes M(p^k\phi|p^k\widetilde\phi)(P_{p^k\gamma}) \cong M(\pi+p^k\phi|\widetilde\pi+p^k\widetilde\phi)(P_{\rho\bbullet p^k\gamma}),$$
as $F[N_{\sym{m+p^kn}}(P_{\rho\bbullet p^k\gamma})/P_{\rho\bbullet p^k\gamma}]$-modules.
\end{prop}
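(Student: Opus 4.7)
The plan is to analyze the Brauer quotients combinatorially via Corollary~\ref{C:basisBroueM}, exploiting Lemma~\ref{L:Nrhovarrho}: the subgroup $P_{\rho\bbullet p^k\gamma}=P_\rho\times P_{p^k\gamma}$ acts on $\{1,\ldots,m\}$ and $\{m+1,\ldots,m+p^kn\}$ as two completely independent pieces, and its normalizer in $\sym{m+p^kn}$ factors as $N_{\sym{m}}(P_\rho)\times N_{\sym{p^kn}}(P_{p^k\gamma})$.

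First I would classify the $P_{\rho\bbullet p^k\gamma}$-fixed $(\pi+p^k\phi|\widetilde\pi+p^k\widetilde\phi)$-tabloids by a notion of \emph{type}: for such a tabloid $\{\T\}$, let $a_i$ (respectively $\widetilde a_i$) be the number of elements of $\{1,\ldots,m\}$ in row~$i$ of $\T_+$ (respectively $\T_-$). Since $k>r$, every $P_\rho$-orbit has size strictly less than $p^k$ while every $P_{p^k\gamma}$-orbit has size at least $p^k$, so the condition that each row is a union of $P_{\rho\bbullet p^k\gamma}$-orbits forces $a_i\equiv\pi_i\pmod{p^k}$ and $\widetilde a_i\equiv\widetilde\pi_i\pmod{p^k}$. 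The normalizer preserves type since it respects the partition of $\{1,\ldots,m+p^kn\}$ into the two domains, so the decomposition of the basis by type lifts to a direct sum decomposition of the Brauer quotient as an $F[(N_\rho/P_\rho)\times(N_{p^k\gamma}/P_{p^k\gamma})]$-module.

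The \emph{canonical} type, with $a_i=\pi_i$ and $\widetilde a_j=\widetilde\pi_j$ for all $i,j$, is always achievable. The corresponding summand has a basis in bijection with pairs consisting of a $P_\rho$-fixed $(\pi|\widetilde\pi)$-tabloid on $\{1,\ldots,m\}$ and a $P_{p^k\gamma}$-fixed $(p^k\phi|p^k\widetilde\phi)$-tabloid on $\{m+1,\ldots,m+p^kn\}$. After checking that the sign twist occurring when a normalizer element permutes entries within a row of a canonical-type tabloid factors as the product of the signs from the two halves (since $N_{\sym{m}}(P_\rho)$ and $N_{\sym{p^kn}}(P_{p^k\gamma})$ act separately on the two portions into which each row is split), this summand is isomorphic to $M(\pi|\widetilde\pi)(P_\rho)\boxtimes M(p^k\phi|p^k\widetilde\phi)(P_{p^k\gamma})$, proving the direct-summand statement.

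For equality when $p^k>\max\{\pi_1,\widetilde\pi_1\}$, the constraints modulo $p^k$ together with $0\leq\pi_i,\widetilde\pi_j<p^k$ force $a_i\geq\pi_i$ and $\widetilde a_j\geq\widetilde\pi_j$; summing and using $\sum_i a_i+\sum_j\widetilde a_j=m=\sum_i\pi_i+\sum_j\widetilde\pi_j$, every inequality collapses, so the canonical type is the only type and the canonical summand exhausts the whole Brauer quotient. I expect the main obstacle to be the sign bookkeeping in the module isomorphism of the previous paragraph, because a single row of the negative part of a canonical-type tabloid is split across both domains and the sign convention after Definition~\ref{def:tabloid} is sensitive to reorderings within rows; once this is handled, the rest is straightforward combinatorics.
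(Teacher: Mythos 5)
Your proof takes the same combinatorial route as the paper---identifying the bases of the Brauer quotients with sets of fixed tabloids via Corollary~\ref{C:basisBroueM} and Lemma~\ref{L:Nrhovarrho}, and constructing an explicit row-wise embedding---but your ``type'' decomposition organizes the splitting step more robustly than the paper's. The paper defines the analogous map on bases (your canonical-type tabloids are precisely its image) and argues the map splits because the domain $M(\pi|\widetilde\pi)(P_\rho)\boxtimes M(p^k\phi|p^k\widetilde\phi)(P_{p^k\gamma})$ is projective, hence injective. That projectivity is not true in general: for instance, with $p=3$ and $\rho=(1^4)$, the quotient $M\bigl((4)|\varnothing\bigr)(P_\rho)\cong M^{(4)}$ is the trivial $F\sym{4}$-module, which is not projective. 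Your observation that $N_{\sym{m}}(P_\rho)\times N_{\sym{p^kn}}(P_{p^k\gamma})$ preserves the type (the row-by-row counts $a_i$, $\widetilde{a}_j$ of entries from $\{1,\ldots,m\}$) gives a direct sum decomposition of $M(\pi+p^k\phi\,|\,\widetilde\pi+p^k\widetilde\phi)(P_{\rho\bbullet p^k\gamma})$ into $F[N_{\sym{m+p^kn}}(P_{\rho\bbullet p^k\gamma})/P_{\rho\bbullet p^k\gamma}]$-submodules (cf.\ the $\rho$-type decomposition in Lemma~\ref{L:rhotype}), with the canonical-type summand being exactly the image; the splitting is then immediate with no appeal to projectivity, and this is a genuine improvement. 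The congruence constraints $a_i\equiv\pi_i\pmod{p^k}$ and the counting argument for equality when $p^k>\max\{\pi_1,\widetilde{\pi}_1\}$ match the paper's. Your closing worry about signs is warranted but resolves cleanly: in a canonical-type tabloid every entry from $\{1,\ldots,m\}$ precedes every entry from $\{m+1,\ldots,m+p^kn\}$ within a given row of $\T_-$, so row-standardizing sorts the two halves independently, and since the two factors of the normalizer move disjoint entry sets, the two signs multiply.
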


Note that, in Proposition \ref{P: 3.1}, while $\sum_{i=0}^r m_i p^i = m$ and $\sum_{i=0}^r n_ip^i = n$, these need not be the base $p$ expressions for either $m$ or $n$.

\begin{proof}
Since $k>r$, by Lemma~\ref{L:Nrhovarrho}, we have
$$P_{\rho\bbullet p^k\gamma}=P_\rho\times P_{p^k\gamma}.$$
To ease the notation, we denote by $M$, $M_1$ and $M_2$ the modules $M(\pi+p^k\phi|\widetilde\pi+p^k\widetilde\phi)$, $M(\pi|\widetilde\pi)$ and $M(p^k\phi|p^k\widetilde\phi)$, respectively. Furthermore, let $P=P_{\rho\bbullet p^k\gamma}$. By Corollary~\ref{C:basisBroueM}, $M(P)$ has as a basis
the subset $\mathcal{B}$  of $\Omega(\pi+p^k\phi|\widetilde\pi+p^k\widetilde\phi)$ consisting of
all $\{\mathrm{R}\}$ such that $\mathrm{R}$ is a row standard
$(\pi+p^k\phi|\widetilde\pi+p^k\widetilde\phi)$-tableau whose rows are unions of $P$-orbits. Similarly, we define  bases $\mathcal{B}_1$ and $\mathcal{B}_2$ of $\Omega(\pi|\widetilde\pi)$ and $\Omega(p^k\phi|p^k\widetilde\phi)$ for $M_1(P_\rho)$ and $M_2(P_{p^k\gamma})$, respectively; here each
 $(p^k\phi|p^k\widetilde\phi)$-tableau $\mathrm{S}$ of $\mathcal{B}_2$
 is filled with the numbers $m+1,m+2,\ldots,m+p^kn$.

For  $\{\T\}\in \mathcal{B}_1$ and
$\{\mathrm{S}\}\in \mathcal{B}_2$, let
$$\psi:\mathcal{B}_1\times\mathcal{B}_2\longrightarrow\mathcal{B}$$
be the map defined by $$\psi(\{\T\},\{\mathrm{S}\})=\bigl\{(\mathrm{R}_+| \mathrm{R}_-)\bigr\},$$
where $\mathrm{R}_+$ is the row standard $(\pi+p^k\phi)$-tableau such that row $i$ of $\mathrm{R}_+$ is the union of row~$i$ of $\T_+$ and row~$i$ of $\mathrm{S}_+$, and
 $\mathrm{R}_-$ is the row standard $(\widetilde\pi+p^k\widetilde\phi)$-tableau such that row~$i$ of $\mathrm{R}_-$ is the union of row $i$ of $\T_-$ and row $i$ of $\mathrm{S}_-$.
 Here we have used the convention row $i$ of $\T_+$ is empty if $i>\ell(\pi)$, and so on. The map $\psi$ is well defined since the rows of $\mathrm{R}=(\mathrm{R}_+|\mathrm{R}_-)$ are union of orbits of $P=P_\rho\times P_{p^k\gamma}$  on $\{1,2,\ldots, m+p^kn\}$.

Clearly $\psi$ is injective and so it induces an injection of vector spaces
\[\theta:M_1(P_\rho)\boxtimes M_2(P_{p^k\gamma})\longrightarrow M(P)\] defined by $\theta(\{\T\}\otimes\{\mathrm{S}\})=\psi(\{\T\},\{\mathrm{S}\})$. By Lemma~\ref{L:Nrhovarrho}, we may regard the domain and codomain of
$\theta$ as $FN_{\sym{m+p^kn}}(P)$-modules with trivial $P$-action.
It is not difficult to check that
 $$\theta(g(\{\T\}\otimes \{\mathrm{S}\}))=g\theta(\{\T\}\otimes \{\mathrm{S}\}),$$
for all $g\in N_{\sym{m+p^kn}}(P)$,  $\{\T\}\in \mathcal{B}_1$ and $\{\mathrm{S}\}\in\mathcal{B}_2$.
Therefore $\theta$ is an injective homomorphism
of $\F N_{\sym{m+p^kn}}(P)$-modules,
 and hence an injective homomorphism of  $F[N_{\sym{m+p^kn}}(P)/P]$-modules.
 Since both $M_1(P_\rho)$ and $M_2(P_{p^k\gamma})$ are projective and hence injective, their outer tensor product is also injective. Therefore, the map $\theta$ splits and we obtain that $M_1(P_\rho)\boxtimes M_2(P_{p^k\gamma})$ is a direct summand of $M(P)$.

The second assertion follows easily by observing that, if $p^k>\mathrm{max}\{\pi_1,\widetilde{\pi}_1\}$, then the map $\psi$ defined above is a bijection.
\end{proof}

We are now ready to prove Theorem \ref{T:2}.

\begin{proof}[Proof of Theorem \ref{T:2}]
Let $\rho \in \C(m)$ and $\gamma \in \C(n)$ be defined by 
\begin{align*}
\rho&=(1^{|\lambda(0)|}, p^{|\lambda(1)|+|\mu(0)|},\ldots, (p^r)^{|\lambda(r)|+|\mu(r-1)|}),\\
\gamma&=(1^{|\alpha(0)|}, p^{|\alpha(1)|+|\beta(0)|},\ldots, (p^s)^{|\alpha(s)|+|\beta(s-1)|}),
\end{align*} where $r=\ell_p(\lambda|p\mu)$ and $s=\ell_p(\alpha|p\beta)$, respectively.
By Definition~\ref{D:signedYoung}, 
$P_\rho$ is a \vertex of $Y(\lambda|p\mu)$ and $P_\gamma$ is a \vertex of $Y(\alpha|p\beta)$.
Since $k>r$, by Lemma \ref{L:Nrhovarrho}, we have
$P_{\rho\bbullet p^k\gamma}=P_\rho \times P_{p^k\gamma}$ and
$$ N_{\sym{m+p^kn}}(P_{\rho}\times P_{p^k\gamma})=N_{\sym{m}}(P_\rho)\times N_{\sym{p^kn}}(P_{p^k\gamma}).$$
By Lemma \ref{L:twistBroue},
$Y(p^k\alpha|p^{k+1}\beta)$ has \vertex $P_{p^k\gamma}$
and $Y(\lambda+p^k\alpha|p(\mu+p^k\beta))$ has \vertex $P_{\rho\bbullet p^k\gamma}$.
Moreover, the
Brou\'{e} correspondent of $Y(\lambda+p^k\alpha\,|\,p(\mu+p^k\beta))$ is $$Y(\lambda|p\mu)(P_\rho)\boxtimes Y(p^k\alpha|p^{k+1}\beta)(P_{p^k\gamma}).$$
By Proposition \ref{P: 3.1}, we have
$$M(\pi|\widetilde\pi)(P_\rho)\boxtimes M(p^k\phi|p^k\widetilde\phi)(P_{p^k\gamma})\
\big{|}\ M(\pi+p^k\phi\,|\,\widetilde\pi+p^k\widetilde\phi)(P_{\rho\bbullet p^k\gamma}).$$
Therefore, using Theorem \ref{BC1}(ii), we deduce that
\begin{align*}
\bigl[M(\pi{}&{}+p^k\phi\,|\,\widetilde\pi+p^k\widetilde\phi):Y(\lambda+p^k\alpha\,|\,p(\mu+p^k\beta))\bigr]\\
&= \bigl[M(\pi+p^k\phi\,|\,\widetilde\pi+p^k\widetilde\phi)(P_{\rho\bbullet p^k\gamma}):Y(\lambda+p^k\alpha\,|\,p(\mu+p^k\beta))(P_{\rho\bbullet p^k\gamma})\bigr]\\
&\geq  \bigl[M(\pi|\widetilde\pi)(P_\rho)\boxtimes M(p^k\phi|p^k\widetilde\phi)(P_{p^k\gamma}):Y(\lambda|p\mu)(P_\rho)\boxtimes Y(p^k\alpha|p^{k+1}\beta)(P_{p^k\gamma})\bigr]\\
&= \bigl[M(\pi|\widetilde\pi)(P_\rho):Y(\lambda|p\mu)(P_\rho)\bigr] \bigl[M(p^k\phi|p^k\widetilde\phi)(P_{p^k\gamma}):Y(p^k\alpha|p^{k+1}\beta)(P_{p^k\gamma})\bigr]\\
&= \bigl[M(\pi|\widetilde\pi):Y(\lambda|p\mu)\bigr] \bigl[M(p^k\phi|p^k\widetilde\phi):Y(p^k\alpha|p^{k+1}\beta)\bigr]\\
&= \bigl[M(\pi|\widetilde\pi):Y(\lambda|p\mu)\bigr] \bigl[M(p\phi|p\widetilde\phi):Y(p\alpha|p^2\beta)],
\end{align*}
where the final equality follows from Corollary \ref{C:asymptotic}.
If $p^k>\mathrm{max}\{\pi_1,\widetilde{\pi}_1\}$, then Proposition \ref{P: 3.1} implies that we have equalities throughout.
\end{proof}

\section{Indecomposable signed Young permutation modules}\label{S:Last}

In this section, in the spirit of Gill's result \cite[Theorem~2]{CGill}, we classify all indecomposable signed Young permutation modules over the field $F$
and determine their endomorphism algebras
and their labels as signed Young modules.
By \cite{CGill}, any
indecomposable
Young permutation module is of the form $M^{(m)}$ or $M^{(kp-1,1)}$. It is immediate
from the definition of signed Young permutation modules in~\eqref{eq:signedYoung} that
\[M(\alpha|\beta)\cong \Ind^{\sym{|\alpha|+|\beta|}}_{\sym{|\alpha|}\times\sym{|\beta|}}\big(M^\alpha\boxtimes (M^\beta\otimes\sgn(|\beta|)\big).\]
As such, by Gill's result, any indecomposable signed Young permutation module
is of one of the forms $M((m)|(n))$, $M((m)|(kp-1,1))$, $M((kp-1,1)|(m))$ or $M((kp-1,1)|(\ell p-1,1))$. Since $M((m)|(kp-1,1))\otimes \sgn(m+kp)\cong M((kp-1,1)|(m))$, there are essentially three
different forms to consider.


\begin{proof}[Proof of Theorem~\ref{T:IndecSigned}]
Let $M_1=M((m)|(n))$. If $m = 0$ then $M_1$ is the sign representation, and if $n = 0$ then $M_1$ is the trivial representation. In these cases,~$M_1$ is simple with 
$1$-dimensional endomorphism ring. Suppose that both $m,n$ are non-zero. By the Littlewood--Richardson rule,
the module $M_1$ has a Specht series with top Specht factor $S^{(m+1,1^{n-1})}$ and
bottom Specht factor $S^{(m,1^n)}$. If $m+n$ is not divisible by $p$ then the $p$-cores of $(m+1,1^{n-1})$ and $(m,1^n)$ are non-empty and distinct
and so $S^{(m+1,1^{n-1})}$ and $S^{(m,1^n)}$ lie in different blocks. Consequently, $M_1$ is decomposable. Now suppose that $m+n$ is divisible by $p$. In this case, by Peel's result \cite{PeelHooks},
\begin{align*}
S^{(m+1,1^{n-1})}&=\left \{\begin{array}{ll} F&n=1,\\ \begin{bmatrix} D^\lambda\\ D^\gamma\end{bmatrix}&n\geq 2, \end{array}\right .& S^{(m,1^n)}&=\left \{\begin{array}{ll} \sgn(m+n)&m=1,\\ \begin{bmatrix} D^\mu\\ D^\lambda\end{bmatrix}&m\geq 2, \end{array}\right .
\end{align*} where $\mu,\lambda,\gamma$ are the $p$-regularization of the partitions $(m,1^n)$, $(m+1,1^{n-1})$ and $(m+2,1^{n-2})$ respectively (see \cite[6.3.48]{JK}). If $m=1$ then $M((1)|(n))\cong M(\varnothing|(n,1))$ is indecomposable. Similarly, if $n=1$ then $M((m)|(1))\cong M((m,1)|\varnothing)$ is indecomposable.
Moreover, since $M((m,1)|\varnothing)$ has a Loewy series with factors $F, D^{(m,1)}, F$,
the endomorphism algebra
$\End_{F\sym{m+1}}M((m,1)|\varnothing)$ is $2$-dimensional. Tensoring by the sign representation
we obtain the same result for $\End_{F\sym{n+1}}(\varnothing|(n,1))$. 

We now study the case
when $m,n \ge 2$. In this case, both the head and socle of~$M_1$ contain the simple module $D^\lambda$. Also, as a signed Young permutation module, $M_1$ is self-dual. Suppose that $D^\gamma$ is not isomorphic to a composition factor of any direct summand of $M_1$ containing $D^\lambda$ in its head (and hence in its socle). Then $D^\gamma$ is necessarily isomorphic to a direct summand of $M_1$.
From the Specht series, there is a surjection $\psi$ from $M_1$ onto the Specht module $S=S^{(m+1,1^{n-1})}$. Since $S$  has composition factors $D^\gamma$ and $D^\lambda$, we have $\psi(D^\gamma)\neq 0$ and so $\psi(D^\gamma)\cong D^\gamma$. Let $Y$ be an indecomposable direct summand of $M_1$ such that $\psi(Y)$ contains a composition factor $D^\lambda$. This shows that $\psi(Y)\cong D^\lambda$ and hence \[S=\psi(D^\gamma\oplus Y)\cong D^\gamma\oplus Y/(Y\cap \ker\psi)\cong D^\gamma\oplus D^\lambda.\] This is absurd since $S$ is indecomposable.
Hence there exists an indecomposable direct summand of $M_1$ containing $D^\lambda$ in its head and that does not contain $D^\gamma$ in its head or in its socle. Dually, there exists an indecomposable direct summand of $M_1$ containing $D^\lambda$ in its head, that does not contain $D^\mu$ in its head or in its socle. Thus the only possibility is that $M_1$ is indecomposable with the Loewy structure \[\begin{bmatrix} D^\lambda\\ D^\mu\ \ D^\gamma\\ D^\lambda\end{bmatrix}\]
and has $2$-dimensional endomorphism ring. 

Let $M_2=M((kp-1,1)|(m))$. By Gill's result, if
 $m=0$ then $M_2$ is indecomposable and
if $m=1$ then $M_2\cong M((kp-1,1^2)|\varnothing)$ is decomposable.
Suppose that $m\geq 2$. By the Littlewood--Richardson rule, $M_2$ has a Specht series with
Specht factors
$S_1=S^{(kp+1,1^{m-1})}$, $S_2=S^{(kp,2,1^{m-2})}$, $S_3=S^{(kp,1^m)}$,
$S_4 = S^{(kp-1,2,1^{m-1})}$, $S_5=S^{(kp-1,1^{m+1})}$,
 with $S_3$ occurring twice. If $m\not\equiv 0$ mod $p$, then $S_1$ and $S_3$
lie in different blocks. If $m\equiv0$ mod $p$ then $S_3$ and $S_4$ belong to different blocks. Thus we conclude that $M_2$ is decomposable whenever $m\geq 2$.

Let $M_3=M((kp-1,1)|(\ell p-1,1))$. Then $M_3\cong M((kp-1,1^2)|(\ell p-1))$. By Gill's result, since $M^{(kp-1,1^2)}$ is decomposable, we have that \[M\bigl( (kp-1,1^2)|(\ell p-1) \bigr)
=\Ind^{\sym{kp+\ell p}}_{\sym{kp+1}\times\sym{\ell p-1}}\bigl(M^{(kp-1,1^2)}\boxtimes
(M^{(\ell p-1)}\otimes \sgn(\ell p-1))\bigr)\] is decomposable.
\end{proof}

We end by determining the labels of the indecomposable signed Young permutation modules.
By the remark immediately following the statement of Theorem~\ref{T:IndecSigned},
it suffices to consider the modules $M((m) | (n))$ where either $m=0$, $n=0$ or $m+n$ is divisible
by $p$.

\begin{prop}\label{prop:labels}
Let $m$, $n \in \N$. Let $n = n_0 + pn'$ where $0\leq n_0 < p$.
There are isomorphisms $M((m)|\varnothing) \cong Y((m)|\varnothing)$,
$M(\varnothing|(n)) \cong Y((1^{n_0})|(pn'))$ and, provided $m+n$ is divisible by $p$,
$M((m)|(n)) \cong Y((m,1^{n_0})|(pn'))$.
\end{prop}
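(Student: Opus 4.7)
The plan is to handle the three isomorphisms in turn; parts (i) and (ii) are short, and part (iii) reduces via the Brou\'e correspondence (Theorem~\ref{BC1}) to a computation inside $F\sym{p}$.

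For $M((m)|\varnothing) \cong Y((m)|\varnothing)$, the left-hand side is the trivial $F\sym{m}$-module, whose vertex is the Sylow $p$-subgroup $P_m = \prod_i (P_{p^i})^{m_i}$ with trivial Brauer correspondent. Since $(m)$ has $p$-adic expansion $\sum_i p^i(m_i)$, Definition~\ref{D:signedYoung} shows $Y((m)|\varnothing)$ has the same vertex $P_m$ and Brauer correspondent $Q_1((m_0)|\varnothing) \boxtimes Q_p((m_1)|\varnothing) \boxtimes \cdots$; each factor is trivial since $P^{(m_i)} = F$ when $F\sym{m_i}$ is semisimple (as $m_i < p$), so Brou\'e correspondence yields the isomorphism. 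The second isomorphism follows immediately from Lemma~\ref{L:sgnTwist}, since $M(\varnothing|(n)) = \sgn(n)$.

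For part (iii), Theorem~\ref{T:IndecSigned} and Proposition~\ref{P:summands} give $M((m)|(n)) \cong Y(\lambda|p\mu)$ for a unique $(\lambda|p\mu)$, which will be identified by computing Brauer correspondents at $P_\rho$ for $\rho = (1^{m_0+n_0}, p^{m_1+n_1}, (p^2)^{m_2+n_2}, \ldots) \in \C(m+n)$. Note $P_\rho$ is conjugate in $\sym{m+n}$ to $P_m \times P_n$. Since $P_\rho$ has odd order, the tabloid whose first row is $\{1, \ldots, m\}$ is $P_\rho$-fixed, so $M((m)|(n))(P_\rho) \neq 0$ by Corollary~\ref{C:basisBroueM}, which forces the vertex of $M((m)|(n))$ to be exactly $P_\rho$. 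By Proposition~\ref{P:Broueperm}, $M((m)|(n))(P_\rho)$ decomposes over $\Lambda(((m)|(n)), \rho)$; the conditions $|\mbf{\gamma}_i| + |\mbf{\delta}_i| = m_i + n_i$ together with $\sum p^i|\mbf{\gamma}_i| = m$ force $\mbf{\gamma}_i = (m_i)$ and $\mbf{\delta}_i = (n_i)$ uniquely (consider the lowest index of disagreement and use $|c_i - m_i| < p$), yielding $M((m)|(n))(P_\rho) \cong W_1((m_0)|(n_0)) \boxtimes W_p((m_1)|(n_1)) \boxtimes \cdots$. On the other hand, the $p$-adic expansions $\lambda_0(0) = (m_0, 1^{n_0})$, $\lambda_0(i) = (m_i)$ for $i \geq 1$ and $\mu_0(i-1) = (n_i)$ for $i \geq 1$ (using $m_0 \geq 1$ when $n_0 \geq 1$, which holds because $p \mid m+n$ and $n_0 < p$) give, via Definition~\ref{D:signedYoung},
\[ Y(\lambda_0|p\mu_0)(P_\rho) \cong Q_1((m_0, 1^{n_0})|\varnothing) \boxtimes Q_p((m_1)|(n_1)) \boxtimes Q_{p^2}((m_2)|(n_2)) \boxtimes \cdots. \]

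For $i \geq 1$ we have $W_{p^i}((m_i)|(n_i)) = Q_{p^i}((m_i)|(n_i))$ directly from the definitions, since $M^{(m_i)} = P^{(m_i)} = F$ and $M^{(n_i)} = P^{(n_i)} = F$. What remains, and is the main obstacle, is the base case $W_1((m_0)|(n_0)) \cong Q_1((m_0, 1^{n_0})|\varnothing)$, that is, $M((m_0)|(n_0)) \cong P^{(m_0, 1^{n_0})}$ as $F\sym{p}$-modules when $m_0 + n_0 = p$ (the case $m_0 = n_0 = 0$ being vacuous, since $p \mid m+n$ forces $m_0 + n_0 \in \{0, p\}$). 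In this base case $M((m_0)|(n_0))$ has trivial vertex (as $\sym{m_0}\times\sym{n_0}$ has order coprime to $p$), so is projective indecomposable, say $\cong P^\alpha$; Pieri's rule gives a Specht filtration with factors $S^{(m_0+1, 1^{n_0-1})}$ and $S^{(m_0, 1^{n_0})}$, with $(m_0+1, 1^{n_0-1}) \rhd (m_0, 1^{n_0})$ so the top factor contributes the head $D^{(m_0+1, 1^{n_0-1})}$. Matching this with $D_\alpha = D^{\alpha'} \otimes \sgn$ and applying the Mullineux involution on hooks in the principal block of $F\sym{p}$, namely $(a, 1^b) \mapsto (b+2, 1^{a-2})$ (derived from the straight-line Brauer tree of the principal block, on which $\sgn$-tensor acts as orientation reversal), we obtain $\alpha' = (n_0 + 1, 1^{m_0 - 1})$ and hence $\alpha = (m_0, 1^{n_0})$. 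The full isomorphism then follows by Brou\'e correspondence.
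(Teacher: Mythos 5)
Your proof is correct, and parts (i) and (ii) match the paper's argument essentially verbatim: part (i) can be dispatched without the Brou\'e machinery (the paper just notes $M((m)|\varnothing)\cong F(m)\cong Y((m)|\varnothing)$), and part (ii) is the same appeal to Lemma~\ref{L:sgnTwist}.

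For part (iii) the overall strategy coincides with the paper's: compute $M((m)|(n))(P_\rho)$ at a Sylow $p$-subgroup of $\sym{m}\times\sym{n}$, use Proposition~\ref{P:Broueperm} together with the observation (which you spell out, and which the paper asserts silently) that $\Lambda\bigl(((m)|(n)),\rho\bigr)$ is a singleton to obtain the single outer tensor product $\largeboxtimes_i W_{p^i}((m_i)|(n_i))$, then match factors against $Y((m,1^{n_0})|(pn'))(P_\rho)$ and invoke the Brou\'e correspondence. The factors for $i\ge 1$ are handled identically in both proofs (since $P^{(m_i)}$ and $P^{(n_i)}$ are trivial, $W_{p^i}((m_i)|(n_i))=Q_{p^i}((m_i)|(n_i))$ holds on the nose). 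Your singleton argument is sound: with $\mbf{\gamma}_i=(c_i)$, $\mbf{\delta}_i=(d_i)$, the constraints give $-p<-m_i\le c_i-m_i\le n_i<p$ and $p\mid(c_i-m_i)$ at the lowest index of disagreement, forcing $c_i=m_i$.

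The genuine divergence is the base case $W_1((m_0)|(n_0))\cong Q_1((m_0,1^{n_0})|\varnothing)$ when $m_0+n_0=p$. The paper observes that $M((m_0)|(n_0))$ is indecomposable (Theorem~\ref{T:IndecSigned}) and must be a Young module (the only non-Young signed Young module of $F\sym{p}$ is $\sgn(p)$, excluded by $n_0<p$); the Specht filtration then pins down its label as $Y((m_0,1^{n_0})|\varnothing)$ because $\chi^{(m_0,1^{n_0})}$ is the dominance-minimal ordinary constituent. You instead note that $M((m_0)|(n_0))$ is projective (induced from a $p'$-subgroup), hence $\cong P^\alpha$ for a unique $p$-restricted $\alpha$; the head is $D^{(m_0+1,1^{n_0-1})}$ (from the top Specht factor, using that a submodule of a PIM lies in the radical), and you recover $\alpha=(m_0,1^{n_0})$ via $D_\alpha=D^{\alpha'}\otimes\sgn$ and the Mullineux rule for hooks, $(a,1^b)\mapsto(b+2,1^{a-2})$, read off the line Brauer tree of the principal block of $F\sym{p}$. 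Both routes are valid. The paper's is more self-contained, staying inside the Young-module and dominance machinery it has built; yours imports a Brauer-tree/Mullineux fact that is standard but external to the paper. The vertex computation you carry out for $M((m)|(n))$ is not strictly needed (it falls out once the two Brauer quotients are matched and seen to be projective), but it is harmless.
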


\begin{proof}
Clearly $M((n) |\varnothing) \cong Y((n) | \varnothing) \cong F(n)$.
The second isomorphism follows from Lemma~\ref{L:sgnTwist}.
In the remaining case, $m$, $n > 0$ and  $m+n$ is divisible by $p$.
Let $m = \sum_{i \ge 0} m_i p^i$ and let $n = \sum_{i \ge 0} n_i p^i$ be the $p$-adic expansions.
Let $r$ be the greatest such that $m_r+n_r \not=0$.
Let $P$ be a Sylow $p$-subgroup of $\sym{m} \times \sym{n}$.
By Proposition~\ref{P:Broueperm} we have an isomorphism of $F[N_{\sym{m+n}}(P)/P]$-modules
\[ M\bigl((m) | (n)\bigr)(P) \cong W_1\bigl((m_0) | (n_0)\bigr) \boxtimes W_p\bigl((m_1)|(n_1)
\bigr) \boxtimes
\cdots \boxtimes W_{p^r}\bigl((m_r) | (n_r)\bigr). \]
By Theorem~3.3, the signed Young module
$Y((m,1^{n_0})|(pn'))$ satisfies
\[ Y\bigl((m,1^{n_0})|(pn')\bigr)(P) =
Y((m_0,1^{n_0}) | \varnothing) \boxtimes Q_p((m_1) | (n_1)) \boxtimes
\cdots \boxtimes Q_{p^r}((m_r) | (n_r)), \]
where $Q_{p^i}((m_i)|(n_i))$
is the $F[(N_k/P_k) \wr \sym{m}]$-module
defined in Definition~\ref{def:Qk}.
The Brou{\'e} correspondence is bijective (see Theorem~\ref{BC1}), so
it suffices to prove that the tensor factors in these two modules agree.

Observe that $m_0 + n_0$ is a multiple of $p$ and $m_0 + n_0 < 2p$.
If $m_0 = n_0 = 0$ we have $W_1(\varnothing|\varnothing)=Y(\varnothing|\varnothing)$. Next, we assume that
$m_0 + n_0 = p$. The $F\sym{p}$-module
$W_1((m_0) | (n_0)) \cong M((m_0) | (n_0))$ is
indecomposable by Theorem~\ref{T:IndecSigned}.
The only signed Young module for $F\sym{p}$
that is not a Young module is the sign representation. Since $n_0 < p$ we see
that $M((m_0) | (n_0))$ is a Young module. The proof of Theorem~\ref{T:IndecSigned}
shows that it has a Specht filtration with $S^{(m_0,1^{n_0})}$ at the bottom and
$S^{(m_0+1,1^{n_0-1})}$ at the top.
Therefore $W_1((m_0)|(n_0))=M((m_0)|(n_0)) \cong Y((m_0,1^{n_0})|\varnothing)$, as required.

Finally suppose that $i \ge 1$.
By Definition~\ref{D:VW}(ii) $W_{p^i}((m_i)|(n_i))$ is
the $\F[(N_{p^i}/P_{p^i}) \wr \sym{m_i+n_i}]$-module
obtained from
\[ 
\Ind_{N_{p^i}\wr(\sym{m_i}\times \sym{n_i})}^{N_{p^i}\wr\sym{m_i+n_i}}\left
(\Inf^{N_{p^i}\wr\sym{m_i}}_{\sym{m_i}}(F(m_i))\boxtimes \bigl(
(\Inf^{N_{p^i}\wr\sym{n_i}}_{\sym{n_i}}(F(n_i))\otimes \widehat{\sgn(N_{p^i})}^{\otimes n_i}\bigr)\right).
\] 
by the canonical surjection $(N_{p^i} \wr \sym{m_i+n_i}) / (P_{p^i})^{m_i+n_i}
\cong (N_{p^i} / P_{p^i}) \wr \sym{m_i+n_i}$.
Since $m_i, n_i < p$ the projective covers $P^{(m_i)}$ and $P^{(n_i)}$ are the
trivial $F\sym{m_i}$- and $F\sym{n_i}$-modules, respectively.
Therefore, by~\eqref{eq:Qk}, we have
$W_{p^i}((m_i)|(n_i)) \cong Q_{p^i}((m_i)|(n_i))$, again as required.
\end{proof}

\end{document}